\newtheorem*{rep@theorem}{\rep@title}
\newcommand{\newreptheorem}[2]{%
\newenvironment{rep#1}[1]{%
 \def\rep@title{#2 \ref{##1}}%
 \begin{rep@theorem}}%
 {\end{rep@theorem}}}
\theoremstyle{plain}
\newtheorem{theorem}{Theorem}[section]
\newtheorem{lemma}[theorem]{Lemma}
\newtheorem{proposition}[theorem]{Proposition}
\theoremstyle{definition}
\newtheorem{definition}[theorem]{Definition}
\theoremstyle{remark}
\newtheorem{remark}[theorem]{Remark}
\newtheorem{example}[theorem]{Example}
\numberwithin{equation}{section}
\newcommand{\R}{\mathbb{R}}
\newcommand{\C}{\mathbb{C}}
\newcommand{\N}{\mathbb{N}}
\newcommand{\E}{\mathbb{E}} 
\DeclarePairedDelimiter\parentheses{\lparen}{\rparen} 
\DeclarePairedDelimiter\abs{\lvert}{\rvert} 
\newcommand{\supp}{\operatorname{supp}}
\newcommand{\intlim}[2]{\bigg/_{\!\!\!\! #1}^{\, #2}}
\newcommand{\ds}{\,\mathrm{d}s}
\newcommand{\dt}{\,\mathrm{d}t}
\newcommand{\dx}{\,\mathrm{d}x}
\newcommand{\Cyl}{\text{Cyl}}
\newcommand{\DN}{\Lambda_{\text{DN}}}
\newcommand{\ND}{\Lambda_{\text{ND}}}
\title{Recovering a $(1+1)$-dimensional wave equation from a single white noise boundary measurement}
\author[,1]{Emilia L.K. Bl{\r a}sten\thanks{\href{mailto:emilia.blasten@lut.fi}{emilia.blasten@lut.fi}}}
\author[,1]{Tapio Helin\thanks{\href{mailto:tapio.helin@lut.fi}{tapio.helin@lut.fi}}}
\author[,1]{Antti Kujanpää\thanks{\href{mailto:antti.kujanpaa@lut.fi}{antti.kujanpaa@lut.fi}}}
\author[,2]{Lauri Oksanen\thanks{\href{mailto:lauri.oksanen@helsinki.fi}{lauri.oksanen@helsinki.fi}}}
\author[,1,3]{Jesse Railo\thanks{\href{mailto:jesse.railo@lut.fi}{jesse.railo@lut.fi}; \href{mailto:railo@stanford.edu}{railo@stanford.edu}}}
\affil[1]{Computational Engineering, School of Engineering Sciences,
Lappeenranta-Lahti University of Technology LUT, Lappeenranta, Finland}
\affil[2]{Department of Mathematics and Statistics, University of Helsinki, Helsinki, Finland}
\affil[3]{Department of Mathematics, Stanford University, Stanford, CA, USA}
\begin{document}

\maketitle

\begin{abstract}
  We consider the following inverse problem: Suppose a $(1+1)$-dimensional wave equation on $\R_+$ with zero initial conditions is excited with  Neumann boundary data modelled as a white noise process. Given also the Dirichlet data at the same point, determine the unknown first order coefficient function of the system.
    
  We first establish that {the} direct problem is well-posed.  The inverse problem is then solved by showing that correlations of the boundary data determine the Neumann-to-Dirichlet operator in the sense of distributions, which is known to uniquely identify the coefficient. This approach has applications in acoustic measurements of internal cross-sections of fluid pipes such as pressurised water supply pipes and vocal tract shape determination.
\end{abstract}

  \bigskip
  \noindent Keywords: correlation imaging, inverse problem, wave equation, point measurement, blockage detection



\section{Introduction}

In this paper, we deal with an inverse problem for the wave equation in a one-dimensional domain. Ideally, we aim to solve the following problem:

\begin{equation}
  \tag{P}\label{quote:ideal-problem}
  \parbox{\dimexpr\linewidth-2cm}{%
    \strut
    Suppose the fluid pressure or flow is measured at a single location in a pipe, with little to no information about the source signals, such as incident or source waves. Can the internal geometry of the pipe be recovered?
    \strut
  }
\end{equation}
\renewcommand*{\theHequation}{eitagia.\theequation}

A problem setting such as the one above is typically referred to as passive imaging, since the goal is to image a medium without generating incident or probing waves. Mathematically, (\ref{quote:ideal-problem}) corresponds to recovering a parameter function of interest of a partial differential equation, given its single solution observed at a specific location. The difficulty of the problem depends on the information we have regarding the source signal. In general, this type of problem is severely underdetermined in spatial dimensions two and greater, where classical methods from inverse problems fail and many open problems exist e.g. Schiffer's problem of determining the support of a scattering potential from a single scattering measurement \cite{lax89_scatt_theor}. In contrast, the one-dimensional case is significantly more tractable, as the problem becomes formally determined. Indeed, in cases where the incident probing wave can be chosen freely, the inverse problem for the one-dimensional wave equation has been solved \cite{gelfand55_deter_differ_equat_from_its_spect_funct,sondhi_gopinath}. These results, along {with} many subsequent works such as \cite{bukhgeim1981global, cheng2002identification, cheng2009uniqueness, feizmohammadi2023global, helin2012inverse, helin2014inverse, Rakesh_2001, stefanov2013recovery, helin2020inverse}, use a specific probing wave. 
We highlight, in particular, a recent line of work based {on} the so-called convexification method and its variations \cite{smirnov20_convex_1d_hyper_coeff_inver,smirnov20_convex_inver_probl_1d_wave,le22_carlem_contr_mappin_1d_inver}.

{In contrast, the inverse hyperbolic problem where the incident wave cannot be chosen freely is much less studied. One of the best results so far is \cite{romanov20_recon_princ_coeff_damped_wave}. They not only determine the unknown coefficient based on output from a fixed incident wave with very minimal smoothness restrictions, but also describe a reconstructions scheme to situations where the output has added $L^2$-noise.}

The pure passive imaging setting, where also the source signal is unknown, has been explored in the literature as well. Very recently, Feizmohammadi proposed a solution to the one-dimensional passive imaging problem in the context of the wave equation in \cite{ali_passive}. This work improves upon previous results by removing restrictions on the admissible source and wave speed functions. For recent examples of such results, see \cite{knox2020determining, liu2015determining, kian2023determination, avdonin2014reconstructing, finch2013transmission, jing2021simultaneous}. Another perspective on passive imaging, in two and higher dimensions, is explored in recent work on the inverse problem of corner scattering, where boundary measurements performed with a single arbitrary wave recover, for example, information about the geometry and location of the scatterer \cite{blåsten14_corner_alway_scatt, hu16_shape_ident_inver_medium_scatt, blåsten20_unique_deter_shape_scatt_screen}.

In this paper, we adopt a different approach to passive imaging by analyzing the statistical properties, specifically the temporal correlations, of wave systems that exhibit ambient pressure fluctuations. Such a imaging modality, often termed \emph{correlation imaging}, exploits the available statistical information about the ambient noise in the system and has great potential for imaging remote or hard to access locations \cite{buckingham92_imagin_ocean_with_ambien_noise,gozum22_noise_based_high_resol_time}. Moreover, correlation imaging generally results in less ill-posed problem formulations compared to the underlying deterministic identification problem and even enables partial recovery in some settings where no deterministic identification results are known, see e.g. \cite{devaney1979inverse, garnier2009passive, garnier2012correlation,  bao2017inverse, helin2018atmospheric, caro2019inverse, agaltsov2020global, li2020inverse, li2022inverse1,li2022inverse2, triki2024fourier,  li2024stability}. In particular, we highlight earlier work by the authors in \cite{helin18_correl_based_passiv_imagin_b}, which is closely related to the present paper. Our contribution is to extend correlation imaging to a setting where we have a point measurement, with time as the only variable in our data.
For an extensive review on correlation imaging, we refer the reader to the monograph \cite{GP_passive}.

Let us consider the potential applications of the specific problem setting of determining the internal geometry of a pipe in (\ref{quote:ideal-problem}). An important problem in civil engineering is the maintenance and diagnosis of water supply pipes \cite{gozum22_noise_based_high_resol_time}.  Mineral buildup and other deposits can reduce the internal cross-sectional area, leading to pressure drops and energy losses. However, excavating pipes in urban areas is costly and disruptive. Therefore remote sensing and indirect measurements are favored in problem diagnosis. Previous work by one of the authors on the Smart Urban Water Supply Systems (SmartUWSS) looked at solutions to this problem (\cite{zouari19_inter_pipe_area_recon_as} and others) in various settings based on measuring the \emph{impulse-response function} such as in \cite{sondhi_gopinath}. A challenge in that context is how to produce the incident $\delta$-function for imaging the pipe. Typically, the measurement is performed by some other smooth probing wave (e.g. a smoothened Heaviside function) followed by signal processing to approximatively reconstruct {the} system's response to an ideal $\delta$-function. In contrast, noise from traffic and other sources is ubiquitous in high pressure water supply pipes and correlating the pressure measurements in different locations can determine the internal cross-sectional area of a pipe \cite{gozum22_noise_based_high_resol_time}. For this application perspective, our work explores whether a single-point measurement provides enough information to solve the problem.

A second application is related to human speech. Vowel production can be modelled as a linear system of a source signal at the glottis travelling through the vocal tract \cite[Chapter 6]{titze00_princ_voice_produc}. Voice synthesis, speech recognition and medical applications all benefit from the knowledge of the shape of the vocal tract and source function at the glottis. \emph{Inverse glottal filtering} is a major research question in acoustics \cite{alku11_glott_inver_filter_analy_human}. The paper by Sondhi and Gopinath \cite{sondhi_gopinath} solved the problem of vocal tract shape reconstruction but they use active measurements (a $\delta$-function as an incident wave). The problem (\ref{quote:ideal-problem}) corresponds to simply listening to someone speak a vowel, and to determine their vocal tract configuration based on the audio alone. It is something that humans are quite adept at doing automatically (especially people good at imitating!) but machines and algorithms struggle with.

\smallskip In this paper, we take a step towards solving (\ref{quote:ideal-problem}) by studying what happens when the system is probed with noise and we can measure the response only in one location. { There is previous work dealing with inverse problems involving noise. Many deal with higher {spatial} dimensions and the recovery of a random source or potential such as \cite{bao16_inver_random_sourc_scatt_probl_sever_dimen,liu23_inver_probl_random_schro_equat}. Recent work for the hyperbolic problem determines the unknown coefficient in cases where additive noise pollutes the theoretical measurement \cite{korpela16_regul_strat_inver_probl_1,romanov20_recon_princ_coeff_damped_wave}. The more recent of these works also allows for an arbitrary but regular incident probing wave. In contrast, our problem of interest considers noise to be the inherent probing signal, and thus our methods for solving the problem rely on tools from stochastics for reducing the problem to a deterministic previously solved one.}

Our method requires the recording of the incident white noise and the response from the pipe. These are then correlated over time. We prove that this operation reconstructs the theoretical impulse response function which according to Sondhi and Gopinath \cite{sondhi_gopinath} determines the internal cross-sectional area function of the pipe. Mathematically this means that the {Dirichlet} and Neumann data of a single realisation of a white noise solution to a (1+1)-dimensional wave equation {determine} the unknown first order coefficient.

\medskip We will describe our result, including the notation next. In models, the {following} function $A$ represents the internal cross-sectional area at points along the axis of the pipe.
\begin{definition}\label{def:admissible}

A function $A : \R \to \R$  is called admissible if there are  points $x_+> x_- > 0$ and a constant $A_\infty >0$ such that 
\begin{align}
&A \in C^\infty(\R), \\
& A > 0, \\
& A (x) = 1 , \quad \text{for every} \quad x < x_- , \label{thirdco} \\
&A (x) = A_\infty, , \quad \text{for every } \quad x > x_+. 
\end{align}
\end{definition}  
\begin{remark}
The 
condition \eqref{thirdco} could be replaced with $A (x) = A_{-\infty}$, $\forall  x < x_-$, where $A_{-\infty} >0 $ is a constant. One can always choose units such that $ A_{-\infty} = 1$, however. 
\end{remark}

Let  $t \mapsto \chi(t)$ be a smooth cut-off function that equals $1$ for large $t$ and is supported in $(0,\infty)$. 
That is; $\chi \in C^\infty( \R) $, $\text{supp} (\chi) \subset (0,\infty)$ and  $\chi-1 \in C^\infty_c(0,\infty)$. 
Let $\omega \in \mathcal{S}'(\R)$ be an arbitrary element in the Gaussian white noise space $(\mathcal{S}'(\R), \mathcal{F}, \mathbb{P})$,
see Section \ref{Gaussi} for more details. 
The product $\chi \omega$ then represents a white noise signal that is switched on smoothly after $t=0$. 
Considering $\chi \omega$ as the Neumann boundary value, 
the wave $u$ in $(0,\infty) \times \R$ generated by the signal is modeled by
\begin{align}
&\square_A u (x,t) = 0 \quad \text{for} \quad (x,t) \in  (0,\infty) \times  \R \label{th-ibvp1}  \\
&u(x,t)  = 0 ,  \quad \text{for}  \quad (x,t) \in  (0,\infty)\times (-\infty,0) \label{th-ibvp2} \\
&\partial_x u (x,t) \big|_{x  = 0}  = \chi(t)\omega(t) , \quad \text{for}  \quad t \in \R,  \label{th-ibvp4}
\end{align}
where $\square_A f = \partial_t^2 f - \partial_x^2 f - (\partial_x A / A) \partial_x f$. { This form of the wave operator is motivated by the study of the water hammer effect in civil engineering \cite{wylie93_fluid_trans_system,ghidaoui05_review_water_hammer_theor_pract}. However we point out that coordinate changes and gauge transforms can be used to move the unknown coefficient to the second or zeroth order terms, making this formulation equivalent with other forms of the hyperbolic second order equation.} Let $ \ND $ stand for the Neumann-to-Dirichlet map (see Definition~\ref{DN-def}) which takes the Neumann boundary value $\chi \omega$ to the corresponding Dirichlet value $u|_{x=0}$. We may also write $\Lambda := \ND$. 
The following theorem is the main outcome of this article:

\begin{theorem}\label{themain}
Let the cut-off function $\chi \in C^\infty( \R)$ be supported in $(0,\infty)$ and satisfy $\chi-1 \in C^\infty_c(0,\infty)$. 
Assume that $A_\nu \in C^\infty(\R)$, $\nu = 1,2$ are admissible and 
denote by ${\Lambda}_{\nu}$ the Neumann-to-Dirichlet map (Definition \ref{DN-def}) corresponding to $ A_\nu$. 
Then, $A_1 \neq A_2$ if and only if there is a measurable set $U \subset \mathcal{S}'(\R)$ with $\mathbb{P}(U) = 1$  such that 
$\Lambda_{1} ( \chi \omega) \neq \Lambda_{2} ( \chi \omega)$ for every $ \omega \in U$. 
That is; any single measurement separates the functions $A_\nu$ almost surely.
\end{theorem}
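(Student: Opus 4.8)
\emph{Proof strategy.} The statement factors through the deterministic operator $\Lambda_1-\Lambda_2$, so the plan is to separate a trivial deterministic equivalence from the real probabilistic content. First I would record the equivalence $A_1=A_2\Leftrightarrow\Lambda_1=\Lambda_2$: one implication is immediate, since equal coefficients produce the same boundary value problem \eqref{th-ibvp1}--\eqref{th-ibvp4} and hence the same Neumann-to-Dirichlet map (Definition~\ref{DN-def}); the converse is precisely the injectivity of the coefficient-to-data correspondence established by Sondhi and Gopinath \cite{sondhi_gopinath}. With this in hand, the reverse direction ($\Leftarrow$) of the theorem is easy: if a full-measure set $U$ with $\Lambda_1(\chi\omega)\neq\Lambda_2(\chi\omega)$ exists, then $U$ is non-empty, so $\Lambda_1\neq\Lambda_2$ as operators and therefore $A_1\neq A_2$. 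Everything then reduces to the forward direction: assuming $A_1\neq A_2$, hence $B:=\Lambda_1-\Lambda_2\neq 0$, I must produce a set $U$ of probability one on which the two Dirichlet responses differ.

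The probabilistic core is a Gaussian anti-concentration argument. It suffices to exhibit a single real test function $\rho\in C_c^\infty(\R)$ for which the random variable $X(\omega):=\langle B(\chi\omega),\rho\rangle$ is a \emph{nondegenerate} centered Gaussian. Indeed, using the adjoint relation and the white-noise framework of Section~\ref{Gaussi}, one has $X(\omega)=\langle\omega,\chi\,B^{*}\rho\rangle$ almost surely, a centered Gaussian of variance $\lVert\chi\,B^{*}\rho\rVert_{L^2}^2$. If this variance is strictly positive then $X$ has no atom at $0$, so $\mathbb{P}(X=0)=0$; since $\{B(\chi\omega)=0\}\subset\{X=0\}$, the set $U:=\{\omega:B(\chi\omega)\neq 0\}$ has $\mathbb{P}(U)=1$ and is exactly where $\Lambda_1(\chi\omega)\neq\Lambda_2(\chi\omega)$. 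Thus the entire problem collapses to choosing $\rho$ with $\chi\,B^{*}\rho\neq 0$ in $L^2$.

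The delicate point is the cut-off $\chi$: from $B\neq 0$ we certainly get $B^{*}\rho_1\neq 0$ for some $\rho_1$, but a priori $\chi$ might annihilate exactly the part of $B^{*}\rho_1$ witnessing $B\neq 0$. The resolution is to exploit that the coefficients $A_\nu$ depend only on $x$, so each $\Lambda_\nu$, and hence $B$, is invariant under time translation; consequently $B^{*}(\rho_1(\cdot-\tau))=(B^{*}\rho_1)(\cdot-\tau)$. Picking $\mu\in C_c^\infty$ with $\langle B^{*}\rho_1,\mu\rangle\neq 0$ and setting $\rho:=\rho_1(\cdot-\tau)$, translation invariance of the Lebesgue pairing gives $\langle B^{*}\rho,\mu(\cdot-\tau)\rangle=\langle B^{*}\rho_1,\mu\rangle\neq 0$. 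Choosing $\tau$ large enough that $\supp\mu(\cdot-\tau)$ lies in the half-line where $\chi\equiv 1$ upgrades this to $\langle\chi\,B^{*}\rho,\mu(\cdot-\tau)\rangle\neq 0$, so $\chi\,B^{*}\rho\neq 0$. Membership $\chi\,B^{*}\rho\in L^2$ then follows from the mapping properties of the Neumann-to-Dirichlet maps supplied by the well-posedness analysis together with finite speed of propagation: causality bounds $\supp(B^{*}\rho)$ from above by $\sup\supp\rho$, while $\supp\chi\subset(0,\infty)$ removes the left tail, so $\chi\,B^{*}\rho$ is compactly supported and square-integrable.

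I expect the main obstacle to be exactly this interaction between the cut-off and the operator, namely ensuring that multiplying by $\chi$ does not destroy a single realization's ability to detect $B\neq 0$. The argument above settles it cheaply via autonomy in $t$ and the fact that $\chi\equiv 1$ on a half-line, but it is the one place where the specific structure of the problem, rather than soft Gaussian measure theory, enters. A secondary technical point is verifying that $\langle B(\chi\omega),\rho\rangle=\langle\omega,\chi\,B^{*}\rho\rangle$ holds as an identity of random variables, which requires $\chi\,B^{*}\rho$ to be a legitimate deterministic test element for the white noise; this is again guaranteed by the regularity and support considerations above. Once a nondegenerate direction $\rho$ is fixed, the conclusion $\mathbb{P}(U)=1$ is immediate from the absence of atoms of a one-dimensional Gaussian of positive variance.
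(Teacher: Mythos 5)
Your proof is correct, but it takes a genuinely different and substantially shorter route than the paper. The paper's argument runs through the empirical correlation operator $C_T$ of \eqref{eq:correlation-function-def}: it computes the mean and the second moment of $\langle C_T(\phi),\psi\rangle$ (the latter via Isserlis' formula and the exponential energy-decay estimates of Appendix \ref{energy-estimate-sec}, which force the mean-zero restriction $\int\phi\,dt=0$), deduces the $L^2(\mathbb{P})$-convergence $C_T\to\tilde\Lambda_{ND}$ (Proposition \ref{L2-con}), and then extracts almost-surely convergent diagonal subsequences over a countable family of test functions before invoking Theorem \ref{sondhi_theorem}. You instead observe that for a fixed $\rho\in C_c^\infty(\R)$ the scalar datum $\langle(\Lambda_1-\Lambda_2)(\chi\omega),\rho\rangle$ equals $\langle\omega,\chi(\tilde\Lambda_{1}-\tilde\Lambda_{2})\rho\rangle$ pointwise in $\omega$ by Proposition \ref{prop:integration-by-parts}, hence is an exact centered Gaussian with variance $\lVert\chi(\tilde\Lambda_{1}-\tilde\Lambda_{2})\rho\rVert_{L^2}^2$; nondegeneracy for a well-chosen $\rho$ follows from Theorem \ref{sondhi_theorem}, the time-translation invariance of $\tilde\Lambda_{\nu}$ (which the paper also uses, in Lemma \ref{weaklemma}), and the fact that $\chi\equiv1$ on a half-line. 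Your support argument is sound: anticausality of the time-reversed problem gives $\supp\tilde\Lambda_{\nu}(\rho)\subset(-\infty,\max\supp\rho]$ while $\supp\chi$ is closed in $(0,\infty)$ and hence bounded away from $0$, so $\chi(\tilde\Lambda_{1}-\tilde\Lambda_{2})\rho\in C_c^\infty(\R)\subset\mathcal{S}(\R)$ and no decay estimate is needed --- you bypass Appendix \ref{energy-estimate-sec}, Isserlis' formula, and the diagonal extraction entirely. What the paper's longer route buys is the constructive content behind the theorem: the limit $C_T\to\tilde\Lambda_{ND}$ is a recovery procedure (correlation imaging), whereas your argument yields only distinguishability. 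Two small repairs: take $U:=\{\omega:\langle(\Lambda_1-\Lambda_2)(\chi\omega),\rho\rangle\neq0\}$ rather than $\{\omega:(\Lambda_1-\Lambda_2)(\chi\omega)\neq0\}$, since the former is manifestly in the cylindrical $\sigma$-algebra while the latter is an uncountable intersection of such sets; and when producing $\rho_1$ with $(\tilde\Lambda_{1}-\tilde\Lambda_{2})\rho_1\neq0$ from $\Lambda_1\neq\Lambda_2$, first translate the separating input so that it is supported in $(0,\infty)$, where Proposition \ref{prop:integration-by-parts} applies.
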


{ Before going to the proofs, let us compare this result to previous work. Very classical results, as considered in \cite{gelfand55_deter_differ_equat_from_its_spect_funct,sondhi_gopinath} require a very specific input function such as a full spectrum or delta distribution in place of our $\chi \omega$. On the output side there is more similarity: both require the full knowledge of the theoretical measurement. \cite{korpela16_regul_strat_inver_probl_1} requires $\delta_0(t)$ as their input, but has a regularization scheme to deal with additive noise in the output measurement. In this consideration, perhaps the closest to our result is \cite{romanov20_recon_princ_coeff_damped_wave}. Like the previous work, they deal with additive noise in the output. However like in the theorem above, they allow for a non-specific input, in fact any non-trivial $C^2$-function vanishing for $t<0$. Integrating our input $\chi \omega$ suitably many times with respect to time would produce a $C^2$-function whose output in our system is the same number of integrals of $\Lambda(\chi \omega)$. Then one could use the methods introduced in \cite{romanov20_recon_princ_coeff_damped_wave} to recover the unknown coefficient. In this paper we consider a different type of proof: we use a correlation operator \eqref{eq:correlation-function-def} and a recent energy estimate \cite{arnold22_expon_time_decay_one_dimen}. This recovers the full Neumann-to-Dirichlet operator in the sense of distributions and the conclusion follows.}

The following diagram represents the structure of the proof:
\begin{equation}\label{tree1}
\xymatrix{
\text{Proposition \ref{prop:integration-by-parts}}  \ar[r] \ar[dr] & \text{Lemma \ref{weaklemma}} \ar[r] & \text{Proposition \ref{L2-con}}  \ar[r] & \text{Proof of Theorem \ref{themain}} \\
    & \text{Lemma \ref{E-lemma}}  \ar[ur] & &  \text{Theorem \ref{sondhi_theorem}} \ar[u]  \\
    & \ar[u] \text{Lemma \ref{K-lemma}} \ar[u]& & \text{Lemma \ref{sondhi_lemma}} \ar[u]  \\
    & \ar[u] \text{Proposition \ref{1decay-pro}} & \\
 &  \ar[u] \text{Lemma \ref{lem:measurement-decay}}& \\
 &  \ar[u]  \text{Lemma \ref{gronw-lemma}} &
    }
\end{equation}

Theorem \ref{themain} is derived in Section \ref{derivation-section}. 
The key element is Proposition \ref{1decay-pro} which gives a relation between random white noise measurements and the standard Neumann-to-Dirichlet map.  
This result relies on energy estimates studied separately in Section \ref{energy-estimate-sec}. 
The actual proof of Theorem \ref{themain} is a combination of Proposition \ref{L2-con} and  the techniques  developed in  \cite{sondhi_gopinath}. 

The overall structure of the article is as follows: 
In Section \ref{preliminaries-sec}, we outline the basic mathematical concepts used in this work. After that, the ``direct problem'' is studied in Section \ref{IBVP}. The main theorem is then  proved in Section  \ref{derivation-section} with the use of estimates derived in Section \ref{energy-estimate-sec}. In the appendix,  some results from  \cite{sondhi_gopinath} are put in the context of this article.  




\subsection*{Acknowledgements}
{ We would like to thank the anonymous referees whose patient reading and comments have improved our manuscript and knowledge of previous work in the many fields in whose intersection this paper is located.}

This work was supported by the Research Council of Finland through the Flagship of Advanced Mathematics for Sensing, Imaging and Modelling (decision numbers 359182 and 359183), Centre of Excellence of Inverse Modelling and Imaging (decision numbers 353094 and 353096), the Finnish Ministry of Education and Culture’s Pilot for Doctoral Programmes (Pilot project Mathematics of Sensing, Imaging and Modelling) and project grants (decision numbers 347715 and 348504). In addition, L.O. was supported by the European Research Council of the European Union, grant 101086697 (LoCal). In addition, J.R. was supported by Emil Aaltonen Foundation, Fulbright Finland Foundation (ASLA-Fulbright Research Grant for Junior Scholars 2024--2025), and Jenny and Antti Wihuri Foundation. Views and opinions expressed are those of the authors only and do not necessarily reflect those of the European Union or the other funding organizations.

\section{Preliminaries}\label{preliminaries-sec}

\subsection{Test functions and distributions}\label{subsection_test-functions}

Let $C^\infty(\R^n)$ stand for the space of smooth functions on $\R^n$. 
We denote by $C^\infty_c(\R^n)$ the space of compactly supported elements in $C^\infty(\R^n)$. 
For a compact $K \subset \R^n$,  let $C^\infty_K (\R^n)$ stand for functions $\phi \in C^\infty_c (\R^n)$ supported in $K$. 
Each $C_{K}^\infty( \R^n)$ admits a Fr\'echet space topology  induced by the seminorms 
\[
\| \phi \|_{k,K} := \sum_{|\alpha| \leq k} \sup_{x\in K}   | \partial^\alpha \phi(x) |, \quad k =0,1,2,3\dots
\]

For any nested sequence $K_1 \subset K_2 \subset K_3 \subset \cdots$ of compact sets (e.g. $K_j = \bar{B}(0,j)$)  such that $\bigcup_{j\in \N} K_j = \R^n$ we have $C_c^\infty ( \R^n) = \bigcup_{j \in \N} C_{K_j}^\infty$. 
We equip the space $C_c^\infty ( \R^n ) $ 
with the inductive limit topology, that is, the largest topology for which the trivial inclusion maps $ C_K^\infty  ( \R^n ) \hookrightarrow  C_c^\infty ( \R^n ) $ are continuous. 
It makes $C_c^\infty ( \R^n ) $ into a topological vector space. 
It is straightforward to check that the inductive limit topology does not depend on the choice of the nested compact sets $K_j$ covering $\R^n$. The elements of $C_c^\infty ( \R^n ) $ are called smooth (compactly supported) test functions.

Let $\mathcal{D}'(\R^n)$ be the dual space of $C^\infty_c (\R^n)$. 
By definition, an element in $ \mathcal{D}'(\R^n)$ is a continuous linear map $u : C^\infty_c (\R^n) \to \C $. Equivalently, $u |_{C^\infty_K (\R^n)}$  is a continuous linear map $C^\infty_K (\R^n) \to \C $ for every compact $K \subset \R^n$. 
It follows that a linear map 
$u : C^\infty_c ( \R^n) \to \C $ lies in $\mathcal{D}'(\R^n)$  if and only if 
for every compact $K \subset \R^n$ there is a constant $C\geq 0$
and $k \in \N$ such that 
\begin{equation}\label{distiey}
| \langle u, \phi \rangle | \leq C \| \phi \|_{k,K}  
\end{equation}
for every $\phi \in C_K^\infty(\R^n)$.
This is often taken as a definition for distributions. The space of compactly supported distributions in $\R^n$ shall be denoted by $\mathcal{E}'(\R^n)$. This is the dual of $C^\infty (\R^n)$ and can be identified as a subspace of $\mathcal{D}'(\R^n)$.
%
For more details on test functions and distributions, see e.g. \cite{friedlander-joshi-book}. The inductive limit topology is treated in the appendix of the book.  

\subsection{The wave front set of a distribution}

Denote $\dot\R^n := \R^n \setminus \{0\}. $ 
By Paley-Wiener theorem, a distribution $u \in \mathcal{D}'(\R^n)$ is smooth in a neighbourhood of $x \in \R^n$ if and only if there is $\phi \in C^\infty_c (\R^n)$ such that $\phi(x) \neq 0$ and 
\begin{equation}\label{palew}
\widehat{\phi u} (\xi )  = O( |\xi|^{-N} ), 
\end{equation}
for every $\xi \in \dot\R^n $ and $N \in \N$  as $|\xi| \to \infty$. Here $\widehat{\phi u}$ stands for the Fourier transform of $\phi u \in \mathcal{E}'(\R^n)$. That is;
\[
\widehat{\phi u}( \xi) = \langle e^{-i(\cdot,\xi)} u ,  \phi \rangle . 
\]
The wave front set $WF(u) \subset \R^{n} \times \dot\R^n $ of $u \in \mathcal{D}'(\R^n)$ can be defined as the complement of all $(x_0, \xi_0) \in     \R^{n} \times \dot\R^n  $ for which  there exists an open neighbourhood $U $ of $x_0$ in $\R^n$, an open conic neighbourhood $V$ of $\xi_0$ in $\dot\R^n$ and a test function $\phi \in C_c^\infty(U)$, $\phi (x_0) \neq 0$ such that  for every $N \in \N$ there is a constant $C\geq 0$ satisfying the estimate 
\[
| \widehat{\phi u} ( \xi ) | \leq  C \langle \xi \rangle^{-N} . 
\]
in $V$. 
Here $\langle \xi \rangle := ( 1 + |\xi|^2)^{1/2} $. 

The image of $WF(u)$ in the projection $(x,\theta) \mapsto x$ is called the singular support of $u$. It is denoted by $\text{singsupp}(u)$. The singular support of $u$ is the collection of points in $\R^n$ at which $u$ fails to be smooth. In microlocal analysis, such points are called singularities. 
For an introduction to the topic, see e.g. \cite{duistermaat-FIOs,grigis-sjostrand}.

\begin{example}
Let $\delta_0 \in \mathcal{D}'(\R^n) $ be the Dirac delta, defined by 
\[
\langle \delta_0 , \phi \rangle := \phi(0), \quad  \phi \in C^\infty_c( \R^n) . 
\]
The wave front set and singular support of $\delta_0$ are 
\[
WF ( \delta_0 ) = \{0\} \times \dot\R^n  
\quad \quad 
\text{and} 
\quad \quad 
\text{singsupp}(\delta_0) = \{0\}, 
\]
respectively. 
\end{example}
\begin{example}
Let $ u  \in \mathcal{D}'(\R^2) $, be the characteristic function of the half-space $\R_+ \times \R$. As a distribution,  
\[
\langle u , \phi \rangle =  \int_{-\infty}^\infty  \int_{0}^\infty \phi(x,y) dxdy, \quad \phi \in C_c^\infty( \R^2) . 
\]
The wave front set and singular support of $u$ are 
\[
WF(u) = \{0\} \times \R \times  \dot\R \times \{0\}  
\]
and 
\[
\text{singsupp}(u) = \{0\} \times \R, 
\]
respectively.  
\end{example}

\subsection{The space \texorpdfstring{$\mathcal{D}_\Sigma'(\R^n)$}{D’\_Σ(Rn )}}
Here we follow \cite{gabor-wf}. See also \cite[Definition 1.3.2]{duistermaat-FIOs}. 
Let $\Sigma $ be a closed conic set in the punctured cotangent bundle $(T^* \R^n) \setminus \{0\} = \R^n \times \dot\R^n$. 
Later we shall set $n=2$ and $\Sigma= \{ (x,t,\xi_t,\xi_x) \in \R^2 \times \dot\R^2 : \xi_t^2 = \xi_x^2 \}.   $
Define 
\[
\mathcal{D}_{\Sigma}'(\R^n) := \{ u \in \mathcal{D}'( \R^n ) :  WF(u)  \subset  \Sigma \} .
\] 
Recall that the rapid decay estimate \eqref{palew} holds away from $\Sigma$ for every $u \in \mathcal{D}_{\Sigma}'(\R^n)$. Hence, we may consider the seminorms 
\begin{equation}\label{gaborseminorm}
  \rho_{N,\Gamma,\phi}( u )  :=  \sup_{\xi \in \Gamma} | \xi |^N  | \widehat{u \phi }( \xi ) |
\end{equation}
where $N \in \N$, $\phi \in C_c^\infty( \R^n) $ and $\Gamma \subset \R^n  $ is a closed conic set such that $\text{supp} (\phi) \times \Gamma  $ is disjoint of $\Sigma$. 
We equip $\mathcal{D}_\Sigma'(\R^n)$  with the topology generated by this family of seminorms  and the subspace topology inherited from   $\mathcal{D}'(\R^n)$. 
A sequence  $u_j$, $j=1,2,3,\dots$ of elements in $\mathcal{D}_\Sigma'(\R^n)$ 
converges to an element $u$  in $\mathcal{D}_\Sigma'(\R^n)$ if and only if it converges to $u$ in $\mathcal{D}'(\R^n)$  (i.e. $\langle u_j , \varphi \rangle \to \langle u , \varphi \rangle $ for every $\varphi \in C_c^\infty( \R^n) $)  and $ \rho_{N,\Gamma,\phi}( u-u_j ) \to 0$ for all $N\in \N$, $\phi \in C_c^\infty( \R^n) $ and closed conic sets $\Gamma \subset \R^n  $ with $\text{supp}(\phi) \times \Gamma $ disjoint of $\Sigma$.


\subsection{Gaussian white noise}\label{Gaussi}

Let $\mathcal{S}(\R^n)$  be the Schwartz space of rapidly decreasing smooth functions on $\R^n$, $n \in \N$.
It is a {Fr\'echet} space under the family of seminorms 
\[
\| u \|_{\alpha,\beta} := \sup_{x \in \R^n }  | x^\alpha \partial_\beta u(x)|, \quad \alpha \in {\N^n}, \quad  \beta \in {\N^n}. 
\]
Let $\mathcal{S}'(\R^n)$ stand for the dual space of $\mathcal{S}(\R^n)$. 
The space $\mathcal{S}'(\R^n)$ is known as the space of tempered distributions. See  \cite{friedlander-joshi-book} for more details. 

%
We equip $\mathcal{S}'(\R^n)$ with 
 the cylindrical $\sigma$-algebra $\mathcal{F}:=\Cyl ( \mathcal{S}'(\R^n))$.
The Bochner-Minlos theorem \cite[Theorem 2.1]{hida-lectures-book}
 implies the existence and uniqueness of Gaussian white noise probability measure $\mathbb{P}$ on $\mathcal{F}$. 
This measure is  typically defined via the property
 \[
 \E  ( e^{i \langle \cdot, \phi \rangle  } )
 = e^{- \frac{1}{2} \| \phi \|^2_{L^2(\R^n)} }, \quad \phi \in \mathcal{S}(\R^n),
 \]
 where $\E ( e^{i \langle \cdot, \phi \rangle  } ) $ stands for the characteristic function
 \[
 \E  ( e^{i \langle \cdot, \phi \rangle  } ) = \int_{\mathcal{S}'(\R^n) } e^{i \langle \omega , \phi \rangle }  \mu (d\omega). 
 \]
The probability space $(\mathcal{S}'(\R^n),   \mathcal{F} , \mathbb{P} )$ is called the white noise probability space. 
Each $\omega \in \mathcal{S}'(\R^n)$ represents a single realization $\langle \omega, \cdot \rangle$ of $n$-parameter white noise. See e.g. \cite{hida-lectures-book, kuo-book} for more details on the topic.



 Let us write $W_\phi := \langle   \cdot, \phi \rangle  $ for $\phi \in \mathcal{S}({\R^n})$. 
It follows from the definition above that white noise admits the isometry 
 \begin{equation}\label{itolike}
 \E W_\phi^2  =  \| \phi \|^2_{L^2}
 \end{equation} 
 (cf. {It\^o} isometry). Consequently, $\E(W_\phi W_\psi ) = \langle \phi, \psi \rangle_{L^2}$. 
Combining this with the zero mean $\E W_\phi = 0 $ yields $\text{Cov} = \text{id}$, where $\text{Cov} : \mathcal{S}( {\R^n})  \to \mathcal{S}'( {\R^n})$ stands for the covariance
 \[
  \langle \text{Cov} (\phi ), \psi \rangle := \E \Big(  ( W_\phi-   \E W_\phi ) (  W_\psi - \E W_\psi) \Big) .
 \]

\begin{remark}
It follows from \eqref{itolike} that $W_\phi $ can be defined as a $L^2 (\mathbb{P})$-function for every $\phi \in L^2 ( \R^d)$ by considering the limit $W_\phi := \lim_{n \to \infty} W_{ \phi_n} $ in $L^2(\mathbb{P})$, where $\phi_n$ stand for Schwartz functions converging to $\phi$ in $L^2 ( \R^d)$. 
This extension can be used to define Brownian motion: 
For simplicity, let us consider the 1-parameter case  $d=1$.
The multi-parameter construction is analogous. 
Let $ \chi_{[0,t)} $ for $t \geq 0$ be the characteristic function of the interval $[0,t)$. It is straightforward to check that the process $B_t  := W_{\chi_{[0,t)}  } $ is a single-parameter Brownian motion. 
White noise, as a stochastic process over $t \geq 0$, can therefore be thought as the distributional derivative $\partial_t B_t$. This is sometimes taken as a definition of white noise. Notice that the derivative $\partial_t B_t$ does not exist in the classical sense, in fact, Brownian motion is nowhere differentiable. 
\end{remark}

\section{The initial-boundary value problem}\label{IBVP}

In this section, $A$ stands for a  strictly positive smooth function $A : \R \to (0,\infty)$ with $\text{supp}(A-1) \subset (0,\infty)$. 
Later, we shall take $A$ to be admissible which is a stronger condition. 
Define the {(1+1)-}dimensional wave operator 
\[
\square_A : \mathcal{D}'(  \R^2 )  \to   \mathcal{D}'(   \R^2  ) 
\]
by 
\[
\square_A u(x,t)   :=  \partial_t^2u (x,t)  - \frac{1}{A(x)}  \partial_x ( A(x)  \partial_x u(x,t)   ).
\]
Denote $\dot\R^2 := \R^2 \setminus \{0\}$.
The principal symbol (see e.g. \cite{grigis-sjostrand}) of $\square_A$ is represented (away from the origin) by the function 
\[
 p(x,t,\xi_x,\xi_t) =   - \xi_t^2   +     \xi_x^2 
\]
on $ (x,t,\xi_x,\xi_t) \in \R^2 \times \dot\R^2$. 
The associated characteristic variety is the null covector bundle
\[
\Sigma := \ker p \subset \R^2 \times \dot\R^2. 
\]
Let $f \in \mathcal{D}' (  \R )$ such that $\text{supp} (f) \subset (t_0,\infty)$. 
Fix $\mu$ to be either $0$ or $1$.
We consider the following initial-boundary value problem:
\begin{align}
&\square_A u (x,t)= 0 \quad \text{for} \quad (x,t) \in  (0,\infty) \times  \R ,\label{ibvp1}  \\
&u(x,t)  = 0 ,  \quad \text{for}  \quad (x,t) \in  (0,\infty)\times (-\infty,t_0), \label{ibvp2} \\
&\partial_x^\mu u (x,t) \big|_{x  = 0}  = f(t), \quad \text{for}  \quad t \in \R.  \label{ibvp4}
\end{align}
 The choice of $\mu$ determines whether the boundary condition \eqref{ibvp4} is of the Dirichlet ($\mu=0$) or Neumann  ($\mu=1$) type. 
For the problem to make sense, we need to fix an appropriate class of $u$. The issue is that the boundary restriction above is not well defined, a priori, for general $u \in \mathcal{D}'( (0,\infty) \times  \R )$. 
This can be resolved by requiring that $u$ lies in the subspace $\mathcal{D}_{\Sigma}'(\R^2)$.
Indeed,  the inclusion
\begin{align}
  \iota_{x=0} : \R \to \R^2, \quad  \iota_{x=0} (t) := (0,t) \in \R^2,
  \end{align}
  generates (see \cite[Theorem 2.5.11']{hormanderFIO1}) a sequentially continuous pull-back 
\begin{align}
\iota^*_{x=0} : \mathcal{D}_{\Sigma}'(\R^2)  \to \mathcal{D}'(\R) 
\end{align}
which can be applied to extend the standard boundary trace operator $u \mapsto u |_{x=0}$ as well as the Neumann data $u \mapsto \partial_x u |_{x=0}$. 
Analogously, the restriction $ u \mapsto u |_{t=0}$ extends continuously to $ \mathcal{D}_{\Sigma}'(\R^2)$ and we can consider the Cauchy data $(u|_{t=0}, \partial_t u |_{t=0})$ for $u$ in this class of distributions. 

Define $\mathcal{D}'(\R)_{t_0} := \{ u \in \mathcal{D}'(\R) : \supp (u) \subset (t_0, \infty) \} \subset \mathcal{D}'(\R) $ and equip it with the subspace topology inherited from $\mathcal{D}'(\R)$. 
Here the topology of $\mathcal{D}'(\R)$ is the usual inductive limit topology described in Section \ref{subsection_test-functions}. 

\begin{proposition}[Existence of solutions]\label{exist-pro}
Let $f \in \mathcal{D}'(\R)_{t_0}$ 
and choose $\mu$ to be either $0$ or $1$.   Then there exists $u \in \mathcal{D}_{\Sigma}'(\R^2) $ that satisfies \eqref{ibvp1}-\eqref{ibvp4} and depends continuously on $f$ in $\mathcal{D}'(\R)_{t_0}$.
\end{proposition}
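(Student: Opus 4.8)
The plan is to strip off the lower-order term by a Liouville transformation, solve the resulting perturbed problem by a traveling-wave ansatz together with a convergent multiple-scattering series, and then read off the wave-front and continuity statements.

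First I would substitute $w=\sqrt{A}\,u$. A direct computation gives $\sqrt{A}\,\square_A u=(\partial_t^2-\partial_x^2+q)w$ with the smooth potential $q=(\sqrt{A})''/\sqrt{A}$, so $\square_A u=0$ is equivalent to $Pw=0$ for $P:=\partial_t^2-\partial_x^2+q$. Since $\supp(A-1)$ is closed in $(0,\infty)$ it is bounded away from $0$, so $A\equiv 1$ near $x=0$; hence $A(0)=1$, $A'(0)=0$, and $q$ is smooth and vanishes near $x=0$ (and wherever $A$ is constant). Because $A(0)=1$ and $A'(0)=0$, the Dirichlet and Neumann traces of $u$ and $w$ at $x=0$ coincide, $u$ and $w$ vanish simultaneously for $t<t_0$, and multiplication by the smooth nonvanishing factor $\sqrt{A}$ preserves the wave-front set. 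It therefore suffices to construct $w\in\mathcal{D}'_\Sigma(\R^2)$ solving $Pw=0$ on $\{x>0\}$ with $w=0$ for $t<t_0$ and $\partial_x^\mu w|_{x=0}=f$, depending continuously and linearly on $f$, and then to set $u=w/\sqrt{A}$.

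Next I would produce the free incoming wave carrying the boundary data: for $\mu=0$ set $w_0(x,t):=f(t-x)$, and for $\mu=1$ set $w_0(x,t):=-F(t-x)$, where $F$ is the primitive of $f$ that vanishes for $t<t_0$. As the pullback of $f$ (resp.\ $F$) by the submersion $(x,t)\mapsto t-x$, $w_0$ lies in $\mathcal{D}'_\Sigma(\R^2)$ (the same pullback theorem \cite[Theorem 2.5.11']{hormanderFIO1} used for the traces applies), is supported in $\{t\ge x+t_0\}$, solves $\square_0 w_0=0$ with $\square_0:=\partial_t^2-\partial_x^2$, realizes $\partial_x^\mu w_0|_{x=0}=f$, and depends continuously and linearly on $f$. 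Writing $w=w_0+w_1$, the remainder solves $\square_0 w_1=-q(w_0+w_1)$ with vanishing initial data and homogeneous $\mu$-type boundary condition. I introduce the forward solution operator $G$ for $\square_0$ on the half-line: a source on $\{x>0\}$ is extended to $\R^2$ by reflection (odd for $\mu=0$, even for $\mu=1$), convolved with the forward fundamental solution $E_+=\tfrac12\mathbf{1}_{\{t\ge|x|\}}$, and restricted to $\{x>0\}$; the symmetry enforces the homogeneous boundary condition and the forward support of $E_+$ enforces causality. The problem then becomes the fixed-point equation $w=w_0-Gq\,w$, to be solved by the Neumann series $w=\sum_{n\ge0}(-Gq)^n w_0$.

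The crux, and the step I expect to be the main obstacle, is the convergence of this series and its continuity in the $\mathcal{D}'_\Sigma$-topology. Because $q$ co-propagates part of the field along the same characteristics as $w_0$, the supports alone do not truncate the series; instead one must exploit its Volterra structure. In characteristic coordinates $\xi=t-x$, $\eta=t+x$ one has $\square_0=4\partial_\xi\partial_\eta$, so inverting $\partial_\xi\partial_\eta$ turns $Pw=0$ into an integral equation whose iterated kernels are supported on nested triangles of bounded area; on every compact set the $n$-th term is thus controlled by $C^n/n!$, giving absolute convergence. I would carry this out first for smooth $f$ supported in $(t_0,\infty)$ (classical Riemann/Picard theory), obtaining a smooth solution with the stated support and a linear solution map that is continuous into $\mathcal{D}'_\Sigma$, and then pass to general $f\in\mathcal{D}'(\R)_{t_0}$ by density of $C^\infty_c((t_0,\infty))$ together with the sequential continuity of the pullback, of multiplication by $q$, and of $G$ on $\mathcal{D}'_\Sigma$. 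Verifying this continuity against the seminorms $\rho_{N,\Gamma,\phi}$ of \eqref{gaborseminorm} is where the real work lies; finite speed of propagation keeps all supports locally controlled, so the limit is a genuine distribution. Finally, $WF(w)\subset\Sigma$ follows from microlocal elliptic regularity for $Pw=0$ on $\{x>0\}$ (where $P$ is elliptic off $\Sigma=\ker p$), while the reflection in $G$ maps bicharacteristics to bicharacteristics and the relation $\xi_t^2=\xi_x^2$ is reflection-invariant, so the wave-front set stays in $\Sigma$ across the boundary. Transforming back, $u=w/\sqrt{A}\in\mathcal{D}'_\Sigma(\R^2)$ solves \eqref{ibvp1}--\eqref{ibvp4} and inherits the continuous dependence on $f$ from each step; uniqueness is not asserted here and would be handled separately via the energy estimates of Section~\ref{energy-estimate-sec}.
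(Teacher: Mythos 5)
Your route is genuinely different from the paper's, and in outline it is viable, but it defers exactly the two points where the paper's argument does its real work. For comparison: the paper never forms a Neumann series at all. It first solves a global Cauchy problem on $\R^2$ with Cauchy data built from the time-reflected antiderivative of $f$, invoking \cite[Th.~5.1.6]{duistermaat-FIOs} to get existence, uniqueness and continuous dependence in $\mathcal{D}'_\Sigma(\R^2)$ for \emph{distributional} data in one stroke; a d'Alembert decomposition in the strip where $A\equiv 1$ then shows the boundary condition holds for early times $t<2\delta+s$ (Lemma \ref{weaker-exist}). The global solution is assembled by correcting the boundary error iteratively, and the crucial structural point is that the $j$-th correction vanishes on the wedge $W_{s+j\delta}$, so on any compact set only finitely many terms are nonzero: the series terminates locally, and continuity in $f$ is inherited term by term from the Cauchy theory. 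Your Liouville transform, free wave $w_0$, and method of images are all sound (in particular $A\equiv 1$ near $x=0$ does hold, so traces and the wave-front set are preserved, and the reflection leaves $\Sigma$ invariant), and they buy a more explicit, classical construction. What they cost you is that your series does not terminate locally — as you note, $q$ re-radiates along the same characteristics — so convergence must be proved, and proved in $\mathcal{D}'_\Sigma(\R^2)$, not merely pointwise.

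The concrete gaps are these. First, the $C^n/n!$ Volterra bound you invoke is a sup-norm estimate for smooth solutions; to conclude you need convergence of $\sum_n(-Gq)^n w_0$ in $\mathcal{D}'(\R^2)$ \emph{and} in the seminorms $\rho_{N,\Gamma,\phi}$ of \eqref{gaborseminorm} when $w_0$ is only a distribution, and this is asserted rather than carried out (you yourself call it ``where the real work lies''). It can be done — pair $(-Gq)^n w_0$ against a test function via the transposed backward operator and use that $\supp w_0\subset\{t\ge x+t_0\}$ keeps the relevant supports compact — but that argument is not in the proposal. Second, the reduction ``do it for smooth $f$, then pass to general $f$ by density'' is not automatic: $\mathcal{D}'(\R)_{t_0}$ is not metrizable, and a linear map that is sequentially continuous on the dense subspace $C^\infty_c((t_0,\infty))$ does not thereby extend to the whole space. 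You must either run the iteration directly on distributional data (which is what makes the first gap unavoidable) or reduce locally to finite order, writing $f=\partial_t^k g$ on the relevant compact time interval and using time-translation invariance to set $u_f=\partial_t^k u_g$. The paper's arrangement — distributional well-posedness from the Cauchy problem plus a locally finite correction series — is precisely designed to avoid both of these issues.
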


The proof relies on the following { well-known lemma. It is proven in many cases with regularity, see e.g. \cite{romanov20_recon_princ_coeff_damped_wave,romanov20_recov_poten_damped_wave_equat}. However we verify the usual steps here to make sure that no hidden issues arise when the boundary term is a distribution.}

\begin{lemma}\label{weaker-exist}

Let $f \in \mathcal{E}' (\R)$ be supported in $(0,\infty)$ and let $\mu$ be either $0$ or $1$. 
Fix small $s,\delta>0$ such that $\text{supp} (f) \subset (s,\infty) $ and $A|_{(-\infty , \delta)} = 1$. 
Then  
there is a solution $u = u_{f} \in \mathcal{D}_{\Sigma}'(\R^2) $ to
\begin{align}
&\square_A u = 0 \quad \text{in} \quad  (0,\infty) \times  \R, \label{ibvp11}  \\
&u = 0 ,  \quad \text{in} \quad  (0,\infty) \times (-\infty,0), \label{ibvp22} \\
&\partial_x^\mu u  \big|_{x  = 0}  = f, \quad \text{in}  \quad (-\infty, \ 2\delta + s) . \label{ibvp44}
\end{align}
that vanishes in the wedge $W := \{ (x,t) \in \R^2 : |t| <x +s\}$ and
depends continuously on $f$  with respect to the topology (see Section \ref{subsection_test-functions}) of $\mathcal{E}'(\R)$.
{ Moreover, the solution $u=u_f$ does not depend of the choice of $s \in (0, \min \supp (f))$ and is linear as a function of elements $f \in \mathcal{E}'(\R)$ supported in $(0,\infty)$.
}  
\end{lemma}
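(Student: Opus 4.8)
The plan is to prove existence of a solution to the local initial-boundary value problem \eqref{ibvp11}--\eqref{ibvp44} by reducing the distributional boundary data to the classical setting via approximation, and then transfer the estimates microlocally. Since $f \in \mathcal{E}'(\R)$ is a compactly supported distribution, it has finite order, so I would first approximate $f$ by smooth data $f_j \in C_c^\infty(\R)$ with $\supp(f_j) \subset (s,\infty)$ and $f_j \to f$ in $\mathcal{E}'(\R)$ (for instance by mollification, which preserves the support condition for mollifiers supported in a small enough interval). For each smooth $f_j$ the existence of a classical solution $u_j$ vanishing for $t < 0$ is standard finite-speed-of-propagation theory for the $(1+1)$-dimensional wave equation with the variable coefficient $A$; this is the content one finds in \cite{romanov20_recon_princ_coeff_damped_wave,romanov20_recov_poten_damped_wave_equat}. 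Because the wave speed is $1$ and the data is switched on at time $s$, the solution $u_j$ vanishes in the wedge $W = \{|t| < x + s\}$ automatically.

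The heart of the matter is to pass to the limit and obtain a distributional solution $u = u_f \in \mathcal{D}_\Sigma'(\R^2)$, together with continuous dependence. First I would establish the \emph{a priori} wavefront bound: any distributional solution of $\square_A u = 0$ has $WF(u)$ contained in the characteristic variety $\Sigma$ by H\"ormander's theorem on propagation of singularities, since $\square_A$ has real principal symbol $p = \xi_x^2 - \xi_t^2$ with $\Sigma = \ker p$. Thus solutions live in $\mathcal{D}_\Sigma'(\R^2)$, which is precisely the space on which the trace maps $\iota_{x=0}^*$ and $\partial_x|_{x=0}$ are sequentially continuous. The main obstacle will be quantifying continuous dependence of $u_j$ on $f_j$ in the $\mathcal{D}_\Sigma'$-topology, i.e. controlling both the distributional action $\langle u_j, \varphi\rangle$ and the Gabor-type seminorms $\rho_{N,\Gamma,\phi}(u_j)$ in \eqref{gaborseminorm} uniformly in terms of the $\mathcal{E}'(\R)$-seminorms of $f_j$. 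The bound on the distributional pairing follows from energy/duality estimates for the wave equation combined with the trace being the inverse operation (one solves a dual backward problem against $\varphi$ and integrates by parts), while the microlocal seminorm bounds follow from the precise description of $WF(u_j)$ in terms of $WF(f_j)$ given by the propagation of the bicharacteristics emanating from $\{x=0\}$ into $\Sigma$; since $f_j \to f$ in $\mathcal{E}'(\R)$ these are Cauchy in $\mathcal{D}_\Sigma'(\R^2)$, so the limit $u$ exists and the solution map $f \mapsto u_f$ is continuous.

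Once the limit solution $u_f$ is in hand, I would verify it solves \eqref{ibvp11}--\eqref{ibvp44}: the equation $\square_A u_f = 0$ and the vanishing \eqref{ibvp22} pass to the limit because $\square_A$ and multiplication by the cutoff of the lower half-plane are continuous on $\mathcal{D}'(\R^2)$, and the boundary condition \eqref{ibvp44} holds because $\partial_x^\mu|_{x=0}$ is sequentially continuous on $\mathcal{D}_\Sigma'$ and $\partial_x^\mu u_j|_{x=0} = f_j \to f$. The restriction to the time interval $(-\infty, 2\delta + s)$ enters because on the region $x < \delta$ the coefficient $A$ is constant equal to $1$, so there the solution solves the constant-coefficient equation and the trace is genuinely local in time; the wedge-vanishing guarantees the boundary trace is determined only by data already switched on.

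It remains to address the two final assertions. For linearity in $f$, I would note that the whole construction—mollification, the classical solution map for smooth data, and the limit—is linear, and that $\mathcal{D}_\Sigma'(\R^2)$ is a topological vector space so limits of linear combinations are the corresponding linear combinations of limits. For independence of the auxiliary parameter $s \in (0, \min\supp(f))$, I would observe that two admissible choices $s_1 < s_2$ both produce solutions vanishing in the larger wedge $\{|t| < x + s_2\}$ and satisfying the \emph{same} equation, the \emph{same} vanishing condition \eqref{ibvp22}, and the \emph{same} boundary trace; invoking a uniqueness statement (finite speed of propagation forces any two such solutions to agree, e.g. by an energy argument applied to their difference, which has vanishing Cauchy data and boundary data) then shows $u_{f}^{(s_1)} = u_{f}^{(s_2)}$. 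The only genuinely delicate point throughout is the uniform microlocal continuity in the $\mathcal{D}_\Sigma'$-topology, which is where the sequential continuity of the pull-back from \cite{hormanderFIO1} and the propagation-of-singularities description of the solution must be combined carefully.
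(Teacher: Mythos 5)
Your architecture differs from the paper's in a fundamental way: you propose to solve the initial-boundary value problem directly for mollified data and pass to the limit, whereas the paper never solves a boundary value problem at all. Instead it poses a global Cauchy problem on $\R^2$ with reflected data $((-1)^\mu\tau^*f^{(-\mu)},(-1)^\mu\tau^*f^{(1-\mu)})$ supported in $\{x<-s\}$, invokes \cite[Th.~5.1.6]{duistermaat-FIOs} (which already supplies existence, uniqueness and \emph{continuous dependence in $\mathcal{D}_\Sigma'(\R^2)$} for distributional Cauchy data), and then uses d'Alembert decomposition in the region where $A\equiv 1$ plus a cutoff below the wedge $W$ to check that the resulting global distribution has the prescribed Neumann/Dirichlet trace for $t<2\delta+s$. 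That reduction is the whole point of the lemma, and it is also why the boundary condition is only claimed on $(-\infty,2\delta+s)$: beyond that time, echoes from the region $x>\delta$ can return to $x=0$ and the Cauchy-problem solution no longer controls the trace.

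There are two concrete gaps in your route. First, the lemma requires $u\in\mathcal{D}_\Sigma'(\R^2)$, a distribution on \emph{all} of $\R^2$ with $WF(u)\subset\Sigma$; this is not cosmetic, since the trace $\iota_{x=0}^*$ and hence the boundary condition \eqref{ibvp44} are only defined for distributions living on a full two-sided neighbourhood of the line $\{x=0\}$. Your classical IBVP solutions $u_j$ are defined only on the half-space $x\ge 0$, and you never say how to extend them across $x=0$ while keeping the wavefront set inside $\Sigma$. The paper's Cauchy-problem construction produces a global distribution automatically. Second, the continuity of $f\mapsto u_f$ in the $\mathcal{D}_\Sigma'$-topology is asserted rather than proved: microlocal elliptic regularity and propagation of singularities tell you \emph{where} $WF(u_j)$ lies, but they are qualitative and do not yield the uniform bounds on the Gabor seminorms $\rho_{N,\Gamma,\phi}(u_j-u_k)$ in terms of $\mathcal{E}'$-seminorms of $f_j-f_k$ that you need to conclude the sequence is Cauchy in $\mathcal{D}_\Sigma'(\R^2)$. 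Obtaining those bounds essentially amounts to exhibiting the solution operator explicitly (as the paper does through the Cauchy problem and the splitting $u_0=\rho_-^*F_0+\rho_+^*G_0$), so the step you flag as ``delicate'' is in fact the missing proof. A smaller remark: your argument for independence of $s$ appeals to a uniqueness statement for the IBVP that is only established later (Proposition \ref{uniq-pro}); the paper avoids this circularity by observing that the generating Cauchy problem \eqref{cauchy1}--\eqref{cauchy3} simply does not depend on $s$.
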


Before proving the lemma, let us show how to derive Proposition \ref{exist-pro} from it. 

\begin{proof}[Proof of Proposition \ref{exist-pro}]
We may assume that $t_0 = 0$. 
Let { $\tau_T \in C_c^\infty(\R)$ stand for a cut-off that equals $1$ near $[-T, T]$, $T>0$ and define $f_0^T := f$. Let $ u_{0}^T$ be the solution in Lemma \ref{weaker-exist} with $\tau_T f_0^T$ as  the boundary value. }
Then $\partial_x^\mu u_0^{{T}} |_{x = 0} =   {\tau_T} f $ in $ (-\infty, 2\delta + s) $. 
%
Set $
f_j^T :=    f_{j-1}^T- \partial_x^\mu u_{j-1}^T |_{x=0} 
$ and define $u_{j}^{{T}}$, $j=1,2,3,\dots$ recursively as the solution in Lemma \ref{weaker-exist} but with the function ${\tau_T} f_j^T$  substituted for $f$  
and $s_j := s_{j-1} + \delta =  s + j \delta$ substituted for $s$.
Since $u_j^{{T}}$ vanishes in the wedge
\[
W_{s_j} := \{ (x,t) \in \R^2 : |t| <x +s_j\} 
 \]
 and $\lim_{j \to \infty} s_j = \infty$, 
the sum $u^{{T}} :=  \sum_{j \geq 0} u_j^{{T}}$ is well defined in $\mathcal{D}'_\Sigma(\R^2)$. Indeed, only finite number of terms $u_j^{{T}}$ are non-zero in a relatively compact neighbourhood. Moreover, each $u_j^{{T}}$ depends  continuously on $\tau_T f$ {which further depends continuously on $f$. }
Hence, $u^{T}$ is continuous as a function of $f \in {\mathcal{D}}_{0}'(\R^n)$.  
T

{
Let $K \subset \R^2 $ be compact. 
Let us show that $u^T = u^{T_K}$ near $K$ for 
$T \geq  T_K := 2\min \{ r \geq 0 : K  \subset W_r \}$, where $W_r  $ stands for the wedge 
\[
W_r := \{ (x,t) \in \R^2 : |t| < x+r\}.
\]
This implies that the weak limit $\lim_{T \to \infty} u^T$ exists and remains continuous in $f$. 
Define $v:= u^T-u^{T_K}$, $T \geq T_K$. 
Let $N$ be so large that  $T_K < s_N $ and therefore $W_{T_K} \subset W_{s_N}$. 
Then $u_j^T $  vanishes near $W_{T_K}$ for every $j > N$ and hence $u^T = \sum_{0\leq j \leq N} u_j^T$ near $W_{T_K} $. 
Moreover, 
\[
\partial_x^\mu u^T |_{x=0} = \sum_{0\leq j \leq N} \partial_x^\mu u_j^T|_{x=0} \quad \text{near} \quad  (-\infty,T_K].
\]
We have 
\begin{align}
f_N^T &= f_{N-1}^T - \partial_x^\mu u_{N-1}^T|_{x=0}   \\
&= f_{N-2}^T - \partial_x^\mu u_{N-2}^T|_{x=0} - \partial_x^\mu u_{N-1}^T |_{x=0}  \\
\vdots 
\\
&=  f_0^T - \sum_{0 \leq j\leq N-1} \partial_x^\mu u_{j}^T  |_{x=0} 
\\
&=   f -  \partial_x^\mu u^T |_{x=0} + \partial_x^\mu
u_N^T|_{x=0} \quad \text{near} \quad  (-\infty,T_K]
\\
&=   f -  \partial_x^\mu u^T |_{x=0} + \tau_T f_N^T \quad \text{near} \quad  (-\infty,T_K].
\end{align}
Hence,
\[
 \partial_x^\mu u^T |_{x=0}  = f   -  (1-\tau_T) f_N^T  \quad  \text{near} \quad   (-\infty, T_K].
\]
Since $T \geq T_K$, the cut-off $(1-\tau_T)$ in the latter term vanishes near $(-\infty, T_K]$ and we arrive at 
\begin{equation}\label{qoo}
 \partial_x^\mu u^T |_{x=0}  = f   \quad  \text{near} \quad   (-\infty, T_K].
\end{equation}
By the same argument, 
\[
 \partial_x^\mu u^{T_K} |_{x=0} =  f-  (1-\tau_{T_K}) f_N^{T_K}  = f  \quad  \text{near} \quad   (-\infty, T_{K}].  
\]
Thus, 
\[
\partial_x^\mu v|_{x=0}  = (\tau_T- \tau_{T_K}) f   
\quad  \text{near} \quad   (-\infty, T_K].
\]
As the cut-off $(\tau_T- \tau_{T_K})$ on the right hand side is supported in $(T_K , \infty)$, we obtain  
\[
\text{supp} (\partial_{x}^\mu v |_{x=0}) \subset (T_K , \infty). 
\]
By the linearity statement in Lemma \ref{weaker-exist}, $v$ is a wave with boundary value as above.
Moreover, $v$ vanishes in the wedge 
\[
 \{ (x,t) \in \R^2 : |t| <x +T_K\} .
 \]
In particular, it vanishes near $K$. 
Thus, 
\[
u^T = u^{T_K} - v = u^{T_K} \quad \text{near} \quad K.
\]

As shown above, the weak limit $u:= \lim_{T \to \infty} u^T $ exists and  is continuous in $f$. 
Moreover, it equals $u^{T_K}$ near compact $K \subset (0,\infty) \times \R$. In particular,
\[
\langle u ,  \varphi  \rangle  = \langle u^{T} , \varphi \rangle = \langle u^{T_K} , \varphi \rangle , 
\]
for    $\varphi \in C_c^\infty((0,\infty) \times \R)$ and $T \geq T_K$,  
where $K$ is such that $\supp(\varphi) \subset K$. 
Since $\square_A$ is a local operator and $ \square_A u^{T_K} = 0$, we have 
\[
\langle \square_A u ,  \varphi  \rangle  = \langle u , \square_A \varphi \rangle 
= \langle u^{T_K} , \square_A \varphi \rangle 
= \langle \square_A u^{T_K} , \varphi \rangle  = 0. 
\]
That is, $\square_A u =0$. 

Finally, let us show that $\partial_x^\mu u|_{x=0} = f$. Fix arbitrary $\phi \in C_c^\infty(\R)$ and 
set $K :=  [-1,1] \times \supp(\phi) $. 
With this choice, 
$\partial_x^\mu u^T |_{x= 0}  = f$ holds near $(-\infty, T_K] $ for $T \geq T_K$ by \eqref{qoo}. 
In particular, it holds near the support of $\phi$. 
By continuity of  $\partial_x^\mu$ and the boundary restriction, 
\[
\langle  \partial_x^\mu u |_{x= 0} , \phi \rangle 
= \langle  \partial_x^\mu (\lim_{T \to \infty} u^T ) |_{x= 0} , \phi \rangle 
 = \lim_{T \to \infty} \langle  \partial_x^\mu  u^T  |_{x= 0} , \phi \rangle 
  = \lim_{T \to \infty}\langle f, \phi \rangle = \langle f, \phi \rangle.
\]
This finishes the proof. 


}

\end{proof}

\begin{proof}[Proof of Lemma \ref{weaker-exist}]
 Let $f_0,f_1 \in \mathcal{D}'(\R)$.
By  \cite[Th. 5.1.6.]{duistermaat-FIOs}, there is a unique solution $u \in \mathcal{D}_\Sigma' (\R^2)$ to the Cauchy problem 
\begin{align}
&\square_A u = 0 \quad \text{on} \quad \R^2, \\
& u|_{t=0}  = f_0 \\
&\partial_t u |_{t=0} = f_1.
\end{align} 
Moreover, the solution depends continuously on the data $(f_0,f_1)$. Indeed, the additional condition \cite[(5.1.10)]{duistermaat-FIOs} in the theorem is met.  
{Below we apply this
with specific choice of $f_0$, $f_1$. The idea is that, with the right choice of Cauchy data, the solution of the  Cauchy problem solves the initial-boundary value problem \eqref{ibvp11}-\eqref{ibvp44} away from $(0,\infty)\times (-\infty,-s]$. By cutting off the component of  $\supp (u)$ in  $(0,\infty)\times (-\infty,-s]$ we obtain a solution in the half-space $(0,\infty)\times \R$. Hence, the Cauchy problem in $\R^2$ can be used to construct solution to the initial-boundary value problem \eqref{ibvp11}-\eqref{ibvp44} in the half-space.}

Let $\phi \in C^\infty_c( \R) $ be a mollifier and denote $\phi_\varepsilon =  \frac{1}{\varepsilon} \phi \left( \frac{x}{\varepsilon} \right)$. 
 Let $u_\epsilon   \in C^\infty(\R^2)$ be the {global} solution to the following Cauchy problem:
 \begin{align}
&\square_A u_\varepsilon = 0 \quad \text{on} \quad \R^2, \label{cauchy1} \\
& u_\varepsilon |_{t=0}  = (-1)^\mu \phi_\varepsilon *   \tau^* f^{(-\mu)} , \\  
&\partial_t u_\varepsilon  |_{t=0} =  (-1)^\mu \phi_\varepsilon *  \tau^*  f^{(1-\mu)} . \label{cauchy3}
\end{align} 
where $\tau : \R \to \R$, $\tau (x) := -x$, $f^{(0)} (x) := f(x) $, $f^{(1)} (x):= \partial_x f(x) $ and $f^{(-1)}$ is the forward propagating inverse derivative, defined by
\begin{align}
\langle f^{(-1)}, \phi \rangle  := 
-\langle f, \tilde\phi  \rangle, \quad  
   \tilde\phi(x) := \int_{-\infty}^{x } \phi(r) dr  ,
\end{align}
for every $\phi \in C_c^\infty(\R)$. 
Notice that this is well defined since $f$ is compactly supported. Moreover, $f^{(-\mu)} $ is supported in $(s,\infty)$.
The solution $u_0$ associated with the data $(f_0,f_1) = ((-1)^\mu\tau^*f^{(-\mu)}, (-1)^\mu \tau^*f^{(1-\mu)})$ coincides with the limit $  \lim_{\varepsilon \to 0} u_\varepsilon$.
{ Let us shortly explain why we choose this wave. First, suppose  we have a trivial profile $A=1$. Then the explicit expression for  the wave $u_\varepsilon$ can be derived by integrating along the null lines $\{ x = c\pm t\}$   (cf. the local version of this argument below). In that case, we would find, after applying a cut-off, that $u_\varepsilon$ solves $\square_A u = 0$, $\partial^\mu_x u |_{x=0} = \phi_\varepsilon * f$ and 
$u = 0 = \partial_t u$ for $t<0$ for small $\varepsilon>0$.  
Taking the limit $\varepsilon \to 0$ would therefore give the solution to \eqref{ibvp11}-\eqref{ibvp44} (even for $\delta= \infty $) by continuity. The mollification
 in the data was introduced so that the integration along null curves would be meaningful.  
Somewhat similar argument could be made  directly without the mollification. 
 Notice that the Cauchy problem itself  does not require smoothness. 
Let us now return to the case where $A$ is non-trivial in $(\delta, \infty)$. 
Although $A=1$ holds near the t-axis $\{x = 0\}$, the preceding argument is not valid anymore. The issue is the possibility of echoes originating from the region $x>\delta$. 
Nevertheless, such echoes reach the point $x=0$ only after a delay: the initial wave must first
propagate into the region, and any scattered signal must then return to $x=0$.
Consequently, the boundary value $\partial^\mu_x u |_{x=0} = \phi_\varepsilon * f$ is preserved for early times $t>0$. More precisely, this holds for  $t \in (-\infty,2\delta +s)$ provided that $\varepsilon>0$ is sufficiently small. 
}

{Let us now do this in detail. As implied above,} 
the operator $\square_A$ reduces to the trivial wave operator ${ \square_1 =} \square =  \partial_t^2 - \partial_x^2$ in the half space $ (-\infty,\delta ] \times \R $ for any sufficiently small $\delta >0$.
It is easy to check that $ \square $ equals  $\frac{1}{4} \partial_{0} \partial_{1}$ in the coordinates $(z_0,z_1) :=  (t-x,  t+x)$. 
Integrating twice, one deduces that the general smooth solution to  $\frac{1}{4} \partial_{0} \partial_{1} v = 0$ is 
\[
 v(z_0,z_1) = F(z_0 ) + G(z_1)  = (F \otimes 1) (z_0,z_1 ) + (1 \otimes G) (z_0,z_1) ,
 \]
 where $F,G \in C^\infty(\R)$. 
Moreover, if  $ v_\varepsilon  = F_\varepsilon \otimes 1 + 1 \otimes G_\varepsilon$ for $\varepsilon >0$  is a family of such solutions and the limit $\lim_{\varepsilon \to 0} v_\varepsilon$ exists in $\mathcal{D}'(\R^2)$, we may choose 
$F_\varepsilon $ and $G_\varepsilon $ such that they converge individually to some distributions $ F_0  $ and $G_0 $. 

Hence, { the global solution $u_\varepsilon \in C^\infty ( \R^2)$ must be of the form}
\begin{equation}\label{wqi}
u_\varepsilon (x,t) =  F_\varepsilon (t-x ) + G_\varepsilon (t+x) 
\end{equation} 
in $ (-\infty,\delta] \times \R $ for some $F_\varepsilon,G_\varepsilon $ that converge in $  \mathcal{D}'(\R)$ as $\varepsilon \to 0$. 
{Notice that this expression holds even though the functions $F_\varepsilon,G_\varepsilon$ cannot be solved from local information alone.}
Formally,  we have
\begin{equation}\label{wqi1}
u_0 (x,t) =  F_0 (t-x ) + G_0 (t+x) 
\end{equation} 
for $(x,t) \in (-\infty,\delta) \times \R$. For the rigorous expression, 
define $\rho_\pm :  \R^2  \to \R $, $\rho_\pm (x,t) := t\pm x$ and  extend the pull-backs $\rho^*_\pm$ into continuous operators $\rho_\pm^* :  \mathcal{D}'( \R ) \to \mathcal{D}'( \R^2 )$  by applying the last identity in 
\[
\langle \rho^*v , \phi \rangle =  \int_{\R^2}  v(t\pm x) \phi(x,t)  dt dx  =  \int_{\R^2}  v(s ) \phi(x,s \mp x )  ds dx .
\]
That is, $\rho_\pm^*v = (\sigma_\pm )_* ( 1 \otimes v )$ where $\sigma_\pm : \R^2 \to \R^2$ is the diffeomorphism $\sigma_\pm (x,s) = (x,s \mp x)$. 
The identity \eqref{wqi} then reads 
\[
u_\varepsilon 
=  \rho_-^* F_\varepsilon + \rho_+^* G_\varepsilon  , \quad \text{in } \quad  (-\infty,\delta] \times \R
\] 
and taking the limit yields 
\begin{equation}\label{s11}
u_0
=  \rho_-^* F_0 + \rho_+^* G_0, \quad \text{in } \quad  (-\infty,\delta) \times \R
\end{equation}
by the continuity of $ \rho_\pm^* $. This is the formal identity \eqref{wqi1} in a rigorous form. 
The two terms,  $  \rho_-^* F_0 \in \mathcal{D}_{\Sigma^\rightarrow}' ( (-\infty,\delta) \times \R )$ and $  \rho_+^* G_0  \in \mathcal{D}_{\Sigma^\leftarrow}' ( (-\infty,\delta) \times \R )$, represent the components of $u_0$ propagating in the {respective} characteristic component
\[
\Sigma^{\rightarrow} := \{ (x,t, -  \lambda , \lambda) : \lambda \in \dot\R\}, \quad \Sigma^{\leftarrow} := \{ (x,t,  \lambda , \lambda) : \lambda \in \dot\R\}
\]
towards the infinities $(   \pm \infty , \infty)$ (i.e. to the right or left).
In fact, we may split 
\[
u_0 = u_0^\rightarrow + u_0^\leftarrow, \quad  u_0^{\rightarrow} \in \mathcal{D}_{\Sigma^{\rightarrow }}' ( \R^2 ), \quad  u_0^{\leftarrow} \in \mathcal{D}_{\Sigma^{\leftarrow }}' ( \R^2 )
\]
in the whole $\R^2$ such that $u_0^\rightarrow =  \rho_-^* F_0$ and $u_0^\leftarrow =  \rho_+^* G_0$ in $(-\infty , \delta) \times \R$. 
\begin{figure}[H]
\begin{center}
\begin{tikzpicture}[scale = 0.8]
\fill[lightgray] (1,5) -- (-5,5) -- (-5,-5) -- (1,-5) --cycle;
\fill[gray] (-1,0) -- (4,5) -- (4,-5) -- cycle;
\draw[->] (-6,0) -- (5,0) node[anchor=north west] {$x$ axis};
\draw[->] (-0.5,-6) -- (-0.5,7) node[anchor=south] {$t$ axis};
\draw[dashed] (-1,-6) -- (-1,6) node[anchor=south east] {$x=-s$ };
\draw[dashed] (1,-6) -- (1,6) node[anchor=south] {$x=\delta$ };
 \node at (2.5, -1)   (a) {\large $W$};
   \node at (-3, -4)   (a) {\large $(-\infty, \delta)\times \R$};
\end{tikzpicture}
\caption{A visualization of the sets $(-\infty, \delta) \times \R$  and $W$. The light gray area represents the half space $(-\infty, \delta) \times \R $. The wedge $W$ is the darker shape on the right.}\label{captionW}
\end{center}
\end{figure}
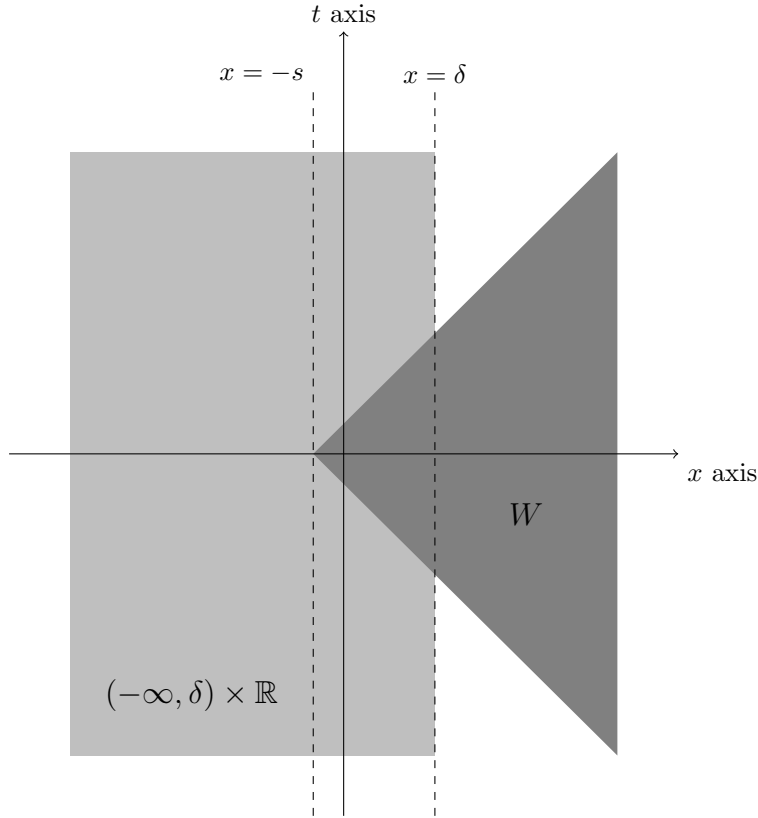

Let $s>0 $ be such that 
$
\text{supp} (f) \subset (s ,\infty),
$ 
as in the assumptions. 
Then the Cauchy data of $u_0$ is supported in $(-\infty ,-s)$ and 
$u_0$ vanishes in the wedge  $W:=  \{ (x,t) :   |t| - s < x  \}$ (see Figure \ref{captionW}) by   \cite[Th. 5.1.6]{duistermaat-FIOs}. 
This implies that 
 $u_0^\rightarrow = - u_0^\leftarrow $ in $W $. 
Since the wave front sets of these terms do not overlap, they must both be smooth in $W$ for this to hold. In $W \cap (0,\delta) \times \R$, 
they then reduce into $u_0^\rightarrow(x,t) =  F_0(t-x)$ and $u_0^\leftarrow(x,t) =  G_0(t+x)$, where $F_0$ and $G_0$ are smooth. 
In the coordinates $z_0 = t-x$ and $z_1 = t+x$, this reads $u_0^\rightarrow(z_0,z_1) =  F_0(z_0)$ and $u_0^\leftarrow(z_0,z_1) =  G_0(z_1)$.
Setting  $F_0(z_0)  = - G_0(z_1)$ in $W \cap (0,\delta) \times \R$ implies $F_0(z_0) = C = - G_0(z_1)$ there. 
We can assume $C=0$ by redefining the functions $F_0  $ and $G_0 $. Indeed, the opposing constants $\pm C$ may be added to them without changing $u_0$. 
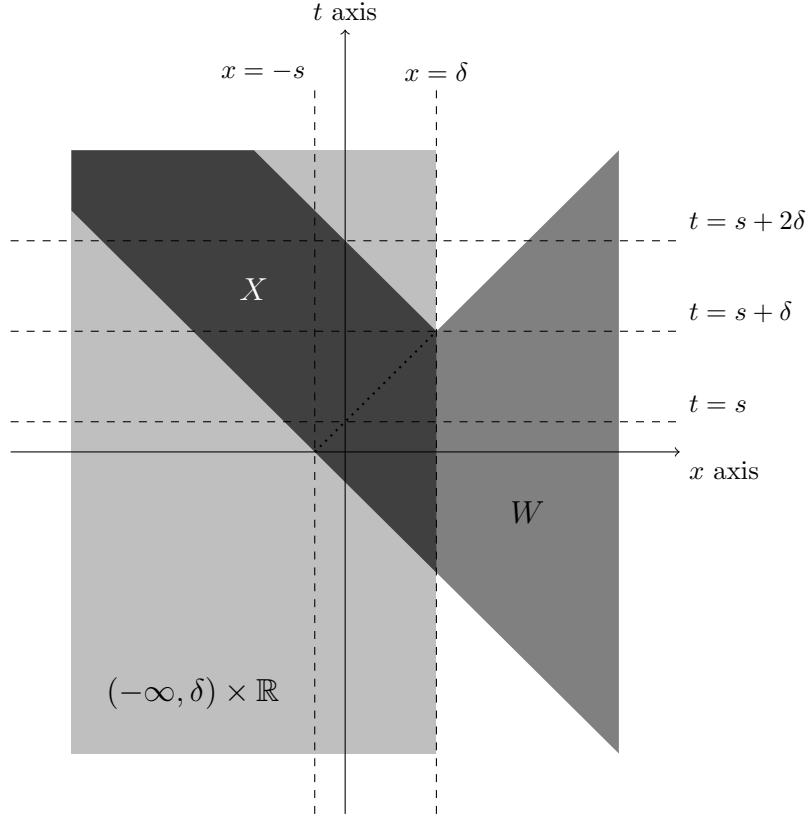
\begin{figure}[ht]
\begin{center}
\begin{tikzpicture}[scale = 0.8]
\fill[lightgray] (1,5) -- (-5,5) -- (-5,-5) -- (1,-5) --cycle;
\fill[gray] (-1,0) -- (4,5) -- (4,-5) -- cycle;
\fill[darkgray] (1,-2) -- (1,2) -- (-2,5) -- (-5,5) -- (-5,4) -- cycle;
\draw[->] (-6,0) -- (5,0) node[anchor=north west] {$x$ axis};
\draw[->] (-0.5,-6) -- (-0.5,7) node[anchor=south] {$t$ axis};
\draw[dashed] (1,-6) -- (1,6) node[anchor=south] {$x=\delta$ };
\draw[dashed] (-1,-6) -- (-1,6) node[anchor=south east] {$x=-s$ };
\draw[dotted, thick] (-1,0) -- (1,2) ;
\draw[dashed] (-6,0.5) -- (5,0.5) node[anchor=south west] {$t=s$ } ;
\draw[dashed] (-6,2) -- (5,2) node[anchor=south west] {$t=s+\delta$ } ;
\draw[dashed] (-6,3.5) -- (5,3.5) node[anchor=south west] {$t=s+2\delta$ } ;
 \node at (2.5, -1)   (a) {\large $W$};
  \node at (-3, -4)   (a) {\large $(-\infty, \delta)\times \R$};
   \node[white] at (-2, 2.7)   (a) {\large $X$};
\end{tikzpicture}
\caption{A visualization of $X$  in relation to $W$ and $(-\infty, \delta)\times \R$. 
The set $X$ is shown in dark gray.  The medium and light gray areas represent  $W$ and $(-\infty,\delta) \times \R$, respectively.  }\label{captionX}
\end{center}
\end{figure}
%
%
With this choice, $u_0^\leftarrow = 0$ in $W \cap (0,\delta) \times \R$  and the invariance of $G_0(t+x)$ along the rays 
\[
R_c = \{ (x,t) \in (-\infty,\delta) \times \R :  t + x =c\}, c \in \R, 
\]
implies that $u_0^\leftarrow$
vanishes in 
\[
X:= \cup \{ R_c : c \in \R, \ R_c \cap W \neq \emptyset\} = \{  (x,t) \in (-\infty,\delta) \times \R :  -x-s < t < 2 \delta + s -x \}.
\] 
This set is shown in Figure \ref{captionX}. 
By   \cite[Th. 5.1.6]{duistermaat-FIOs}, the solution to the Cauchy problem is unique in the wedge 
\[
V =  \{ (x,t) \in (-\infty,\delta) {\times \R} : x < -|t| + \delta\}.
\] This set is shown in Figure \ref{captionV}. 
\begin{figure}[ht]
\begin{center}
\begin{tikzpicture}[scale = 0.8]
\fill[lightgray] (1,5) -- (-4,5) -- (-4,-5) -- (1,-5) --cycle;
\fill[gray] (-1,0) -- (4,5) -- (4,-5) -- cycle;
\fill[darkgray] (1,0) -- (-4,5) -- (-4,-5) -- cycle;
\draw[->] (-5,0) -- (5,0) node[anchor=north west] {$x$ axis};
\draw[->] (-0.5,-6) -- (-0.5,6) node[anchor=south east] {$t$ axis};
\draw[dashed] (1,-6) -- (1,6) node[anchor=south] {$x=\delta$ };
 \node[white] at (-2, -1)   (a) {\large $V$};
   \node at (2.5, -1)   (a) {\large $W$};
\end{tikzpicture}
\caption{A visualization of $V$ in relation to the sets $W$ and $(-\infty, \delta ) \times \R$. The wedge $V$ is shown in dark gray.  The medium and light gray areas represent  $W$ and $(-\infty,\delta) \times \R$, respectively. }\label{captionV}
\end{center}
\end{figure}
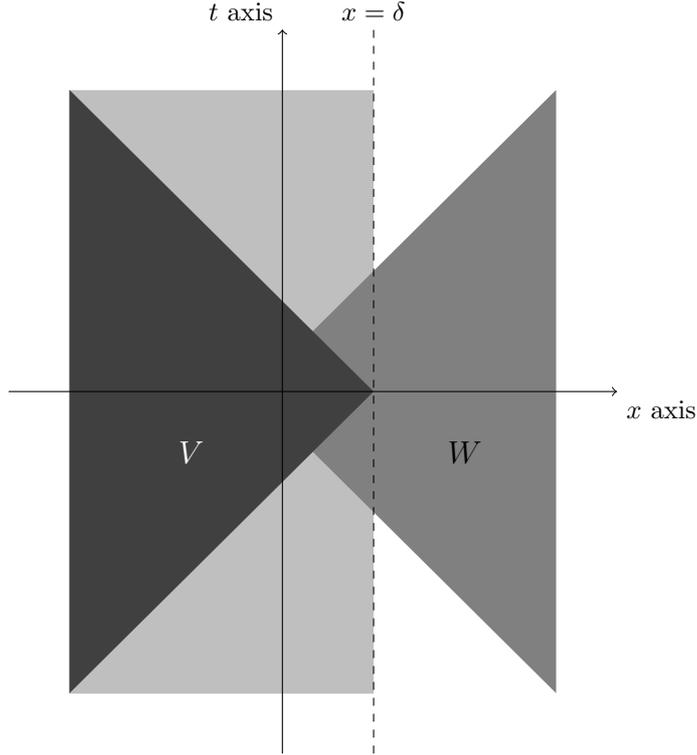
In our case, that solution is the function $(-1)^\mu \rho^*_- f^{(-\mu)} $. 
Hence, $u_0 = (-1)^\mu \rho^*_- f$ in $V$. 
This means that   $u_0^\rightarrow= (-1)^\mu \rho^*_- f^{(-\mu)} $ and $u_0^\leftarrow = 0$ in $V$. 
On the other hand, $u_0^\rightarrow= \rho^*_- F_0$ holds in $(-\infty,\delta) \times \R$ and applying the invariance 
of $\rho^*_- F_0$ along the rays $x-t =c$, we deduce that $u_0^\rightarrow =(-1)^\mu \rho^*_- f^{(-\mu)} $ holds in 
\[
X_{t > 0} := \{ (x,t) \in X : t > 0 \}
\]
too.
Indeed, the rays from $V \cap X$ fill $X_{t > 0}$ and we obtain the extension of $u_0^\rightarrow = (-1)^\mu \rho^*_- f^{(-\mu)} $ from $V \cap X$ to this set by the invariance. 
Since we already know that $u_0^\leftarrow$ vanishes in $X$,  we conclude that $u_0 = (-1)^\mu \rho^*_- f^{(-\mu)} $ does not only hold in $V$ but also in $V \cup X_{t >0}$. 
Recall that by  \cite[Th. 5.1.6.]{duistermaat-FIOs} the wave $u_0$ vanishes in $W \supset (0,\infty) \times \{0\}$. This further extends the identity $u_0 = (-1)^\mu \rho^*_- f^{(-\mu)} $ to 
$V \cup X_{t >0} \cup W$. 
Considering $u_0$ as a distribution in $\mathcal{D}'((0,\infty) \times \R)$, 
we may cut off everything  below $W \cap (0,\infty) \times \R$ without conflicting $\square_A u_0 = 0$. 
Indeed, $\square_A  u  = 0$ in  $(0,\infty) \times \R$ for $u := \chi u_0$, where $\chi \in C^\infty$ satisfies $\chi|_{t>-s/4} = 1$  and $\chi|_{t<-s/2} = 0$. 
Moreover, $u$ vanishes in $(0,\infty) \times (-\infty,0)$ since $u_0 = 0$ in $W$ and $\chi$ cuts off everything below it in $(0,\infty) \times \R$. 
As a conclusion, the identity $u = (-1)^\mu \rho^*_- f^{(-\mu)} $ holds in
\[
V \cup X_{t >0} \cup W \cup \ker \chi .
\]
This set contains an open neighbourhood of $\{0\} \times (-\infty, 2 \delta  + s)$ and hence 
\[
\partial^\mu_x u|_{x=0} = \partial^\mu_x ( (-1)^\mu \rho^*_-  f^{(-\mu)} )|_{x=0} =  f  
\]
for $t<2 \delta + s$. 

{
Finally, we note that the element $u=u_f$ above
does not change if we choose different \\ $s \in (0,\min \supp(f))$. Indeed, the Cauchy problem \eqref{cauchy1}-\eqref{cauchy3}, which was used to generate $u$, remain the same. 
Moreover, the linearity of $u$ in $f$ is also inherited from the Cauchy problem. 
}
\end{proof}


\begin{proposition}[Uniqueness of solutions]\label{uniq-pro}
Let $f \in \mathcal{D}'(\R)$ be supported in $(t_0,\infty)$.  Let $u_1,u_2 \in \mathcal{D}_{\Sigma}'(\R^2) $ be two solutions to (\ref{ibvp1})-(\ref{ibvp4}). 
Then $u_1 = u_2 $ in $(0,\infty) \times \R$. 
\end{proposition}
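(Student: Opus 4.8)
The plan is to prove that the difference $w := u_1 - u_2$ vanishes on $(0,\infty)\times\R$. Because the problem \eqref{ibvp1}--\eqref{ibvp4} and all the trace maps on $\mathcal{D}_\Sigma'(\R^2)$ are linear, $w \in \mathcal{D}_\Sigma'(\R^2)$ solves the homogeneous problem: $\square_A w = 0$ in $(0,\infty)\times\R$, $w=0$ in $(0,\infty)\times(-\infty,t_0)$, and $\partial_x^\mu w|_{x=0}=0$. Fix $\delta>0$ with $A|_{(-\infty,\delta)}=1$, so that $w$ also solves the free wave equation $\square w=0$ on the strip $(0,\delta)\times\R$. I would argue by continuation in time, showing that the set of times below which $w$ is known to vanish can always be enlarged by the fixed amount $\delta/2$; finite speed of propagation is the engine, and the homogeneous boundary condition is fed in through a reflection across $\{x=0\}$.

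Concretely, set $\tau := \sup\{T : w = 0 \text{ in } (0,\infty)\times(-\infty,T)\}$, note $\tau \ge t_0$, and suppose for contradiction that $\tau<\infty$; then $w=0$ in the open region $(0,\infty)\times(-\infty,\tau)$. For the interior I would use the finite speed of propagation contained in the Cauchy uniqueness statement \cite[Th. 5.1.6]{duistermaat-FIOs}: if $x_1>0$ and $t_1<\tau+x_1$, then for small $\eta>0$ the backward characteristic cone from $(x_1,t_1)$ down to the level $\{t=\tau-\eta\}$ is contained in $\{x>0\}$, where $\square_A w=0$, while its base lies in $\{t<\tau\}$, where $w$ and $\partial_t w$ vanish; hence $w(x_1,t_1)=0$. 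Letting $\eta\to 0^+$ yields $w=0$ on $\{x>0,\ t<\tau+x\}$.

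For the boundary I would reflect across $\{x=0\}$. On $(-\delta,\delta)\times\R$ define $\tilde w := w$ for $x\ge 0$ and, for $x<0$, the odd reflection $\tilde w(x,t):=-w(-x,t)$ when $\mu=0$ and the even reflection $\tilde w(x,t):=w(-x,t)$ when $\mu=1$. Since $A=1$ here, $\tilde w$ solves $\square\tilde w=0$ on either side of $\{x=0\}$; the homogeneous condition $\partial_x^\mu w|_{x=0}=0$ is exactly what forces both the value trace and the normal-derivative trace of $\tilde w$ to agree across $\{x=0\}$, so that no distribution supported on $\{x=0\}$ is produced and $\square\tilde w=0$ holds throughout $(-\delta,\delta)\times\R$. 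This distributional matching is the main obstacle; it is to be handled in the $\mathcal{D}_\Sigma'$ framework just as in Lemma \ref{weaker-exist}, where the relevant traces are the continuous pull-backs $\iota^*_{x=0}$ and $\iota^*_{x=0}\partial_x$ and the $\rho_\pm^*$-decomposition separates the left- and right-moving parts. As $\tilde w=0$ for $t<\tau$, the same finite-speed argument applied to $\square\tilde w=0$ on the symmetric strip gives $\tilde w=0$ whenever the backward cone stays inside $(-\delta,\delta)$, that is for $t<\tau+\delta-|x|$; restricting to $x\ge 0$ this reads $w=0$ on $\{0\le x<\delta,\ t<\tau+\delta-x\}$.

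Combining the two estimates finishes the argument. For $x\ge\delta/2$ the interior bound gives $w=0$ for $t<\tau+x$, and $\tau+x\ge\tau+\delta/2$; for $0\le x\le\delta/2$ the boundary bound gives $w=0$ for $t<\tau+\delta-x$, and $\tau+\delta-x\ge\tau+\delta/2$. Hence $w=0$ on $(0,\infty)\times(-\infty,\tau+\delta/2)$, contradicting the maximality of $\tau$. Therefore $\tau=\infty$, so $w=0$ and $u_1=u_2$ in $(0,\infty)\times\R$. Apart from the distributional reflection across $\{x=0\}$, every step is the classical domain-of-dependence property already available from \cite[Th. 5.1.6]{duistermaat-FIOs}.
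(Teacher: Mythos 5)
Your argument is correct in outline, but it is a genuinely different proof from the one in the paper. The paper first shows that $v=u_1-u_2$ is \emph{smooth} up to the boundary --- by splitting $v=v^\rightarrow+v^\leftarrow$ microlocally, propagating regularity out of the wedge $\{t<x\}$ along null bicharacteristics, and using the representation $v^\rightarrow=F(t-x)$ on the strip where $A\equiv 1$ together with the homogeneous boundary condition to kill the remaining singularities --- and only then runs a classical energy identity $E'(\tau)=0$ on the expanding interval $(0,\tau)$. You instead avoid both the regularity upgrade and the energy identity, replacing them by a time-continuation scheme driven by finite speed of propagation plus an odd/even reflection across $\{x=0\}$; this is more elementary and works directly at the level of distributional solutions, and your bookkeeping (the interior cone giving $w=0$ for $t<\tau+x$, the reflected cone giving $w=0$ for $t<\tau+\delta-x$, hence advancement by $\delta/2$) is correct. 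The one step you leave as a sketch is exactly the delicate one: to know that the reflected $\tilde w$ solves $\square\tilde w=0$ across $\{x=0\}$ you must pass from the boundary condition $\iota^*_{x=0}\partial_x^\mu w=0$, which is defined via the global trace on $\mathcal{D}'_\Sigma(\R^2)$, to the relation $(-1)^\mu F^{(\mu)}+G^{(\mu)}=0$ between the one-sided d'Alembert components $w=\rho_-^*F+\rho_+^*G$ on $(0,\delta)\times\R$; once that relation is in hand the reflected extension is literally $\rho_-^*F\pm\rho_+^*F$ (up to a constant) on all of $(-\delta,\delta)\times\R$, so no surface layer on $\{x=0\}$ can appear and no further matching is needed. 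That trace identification is of the same nature as the step $\partial_x^\mu v^\rightarrow|_{x=0}\equiv(-1)^\mu F^{(\mu)} \bmod C^\infty$ in the paper's own proof of Proposition \ref{uniq-pro} (and as the manipulations in Lemma \ref{weaker-exist}), so it is completable with the tools already in the paper, but you should write it out rather than defer to it: in particular you need to argue that the global trace of $w$ at $x=0$ is determined by $w|_{x>0}$, which uses $WF(w)\subset\Sigma$ to exclude conormal contributions concentrated on $\{x\le 0\}$. With that supplied, your proof stands, and it has the mild advantage of never needing Lemma \ref{ooiiqweh} or the smoothness of $v$.
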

\begin{proof}
Throughout the proof, we may assume that $t_0 = 0$. 
Define $v = u_1- u_2$. 
Then $v$ obeys \eqref{ibvp1}-\eqref{ibvp4} with $0$ substituted for $f$.  Moreover, $v$ vanishes in $\{ (x,t) \in (0,\infty) \times \R : t < x \}$ by \cite[Th. 5.1.6]{duistermaat-FIOs} 
and the fact that $v=0$ in  $(0,\infty) \times (-\infty,0)$.
Let us first assume  that $v \in C^\infty ([0,\infty) \times \R)$. 
Then we may consider the energy $E: [0,\infty) \to [0,\infty)$, 
\[
 E(t) := \frac{1}{2} \int_0^t |\nabla_{x,t}v(x,t)|^2 A(x) d x = \frac{1}{2} \int_0^t  [ (\partial_t v(x,t))^2 + (\partial_x v(x,t))^2 ]A (x) d x
\]
 over $x$ in the expanding interval $(0,t)$. 
The derivative of $E(t)$ is 
\begin{align}
E'(t) =   \int_0^t  [ \partial_t v(x,t)    \partial_t^2 v(x,t) + \partial_x v(x,t) \partial_t \partial_x v(x,t)  ] A(x) dx \\+ \frac{1}{2}   [ (\partial_t v(t,t))^2 + (\partial_x v(t,t))^2 ] A (t).
\end{align}
The latter term must be zero since $v$ is smooth and vanishes on $\{ (x,t) \in (0,\infty) \times \R : t < x \}$.
In the first term, we integrate by parts. This gives 
\begin{align}
E'(t) 
=  \int_0^t   \partial_t v(x,t)  (   \square_A v(x,t)  ) A(x)dx \\
+A(t) \partial_t v(t,t)  \partial_x v(t,t) - A(0) \partial_t v(0,t)  \partial_x v(0,t)  
\end{align}
The first term vanishes since $  \square_A v  = 0 $ in $(0,\infty) \times \R$. The middle term is zero as $v$ is smooth and vanishes  in $\{ (x,t) \in (0,\infty) \times \R : t < x \}$. 
For the last term, we notice that {it} is zero as either $\partial_t v(0,t) = \partial_t  ( v(0,t) ) $ or $\partial_x v(0,t) $ {vanishes} due to smoothness and $\partial^\mu v|_{x=0} =0$. Thus, $E'(t) = 0$. That is; the energy is conserved and we deduce $E(t) = E(0) = 0$. 
It is easy to check that this happens if and only if $v$ is constant in $\{ (x,t)  \in (0,\infty) \times \R  :  x< t \}   $. Further, this constant is zero by the smoothness and $v(t,t)= 0$. 
In summary, $v$ is a smooth function that vanishes in $\{ (x,t) \in (0,\infty) \times \R : t < x \}$ and $\{ (x,t) \in (0,\infty) \times \R : x < t \}$. Therefore, it must be zero in  $(0,\infty) \times \R$. 
That is,  $u_1 = u_2$ in  $(0,\infty) \times \R$. 

As shown above, the claim follows if $v = u_1 - u_2 $ is smooth in $[0,\infty) \times \R$. 
Let us show that this indeed holds. 
Since $v  = u_1 - u_2  \in \mathcal{D}_\Sigma' (\R^2)  - \mathcal{D}_\Sigma' (\R^2)   = \mathcal{D}_\Sigma' (\R^2)  $, we have a well-defined boundary value 
\[
\partial_x^\mu v|_{x=0} = \partial_x^\mu u_1|_{x=0} - \partial_x^\mu u_2|_{x=0} = 0
\]
which is obviously smooth.   
It suffices to show that $v \in C^\infty( (0,\infty) \times \R)$ (see Lemma \ref{ooiiqweh} below). 


For the rest of the proof, we consider $v$ as a distribution in $\mathcal{D}'( (0,\infty) \times \R)$. 
We split $v$ microlocally into $v = v^\rightarrow + v^\leftarrow  $, where 
\begin{align}
&WF( v^\rightarrow ) \subset \Sigma^\rightarrow := \{ (x,t,-\xi,\xi)   : \xi \in \dot \R \}, \\ 
&WF( v^\leftarrow ) \subset \Sigma^\leftarrow := \{ (x,t,\xi,\xi) : \xi \in \dot \R \}. 
\end{align}
By the propagation of singularities, these wave fronts are invariant in the canonical symplectic flows in $\Sigma^{\leftarrow}$ and $\Sigma^{\leftarrow}$ inherited from $\Sigma$. 
Recall that $v=0$ in $Q:= \{ (x,t) \in (0,\infty) \times \R : t < x \} $. 
As the wave front sets of $v^\rightarrow$ and $ v^\leftarrow$ do not overlap, 
its must be that $v^\rightarrow$ and $ v^\leftarrow$ are smooth in $Q$. 
As every orbit (null bicharacteristic) in $\Sigma^{\leftarrow}$ reaches $Q \times \dot \R^2$, the component $v^\leftarrow$ is smooth in the whole $(0,\infty) \times \R$ by the propagation of singularities. 
Using the fact that $\square_A = \square =  \frac{1}{4} \partial_{z_+ } \partial_{z_-}$ in $(0,\delta) \times \R$ in the coordinates $z_\pm = t \pm x$, one can show that $v^\rightarrow(x,t) \equiv F(t-x)$, $F \in \mathcal{D}'(\R)$ is the general form of a wave in $(0,\delta) \times \R$ propagating in $\Sigma^\rightarrow$.
Above, $F(t-x)$ stands for $\phi \mapsto \int  F (s)   \phi(x,s+x)  ds dx$, $  \phi \in C^\infty_c $   in the sense of distributions. 
We have 
\[
 0 = \partial_x^\mu v|_{x= 0} = \partial_x^\mu \underbrace{v^\rightarrow}_{=F(t-x)} |_{x= 0}+ \underbrace{v^\leftarrow}_{\in C^\infty} |_{x= 0} \equiv (-1)^\mu  F^{(\mu)} \mod C^\infty.
\]
Hence $v^\rightarrow \in C^\infty( (0,\delta) \times \R)$ and therefore $v^\rightarrow \in C^\infty( (0,\infty) \times \R)$ by the propagation of singularities. 
In conclusion, $v = v^\rightarrow + v^\leftarrow \in C^\infty( (0,\infty) \times \R)+C^\infty( (0,\infty) \times \R) \subset  C^\infty( (0,\infty) \times \R)$. 
This finishes the proof. 
\end{proof}

\begin{lemma}\label{ooiiqweh}
Let $v \in \mathcal{D}'_\Sigma(\R^2)$ be a solution to (\ref{ibvp1})-(\ref{ibvp4}) with $f\in C^\infty(\R)$. Moreover, assume that $v$ is  smooth in  $  (0,\infty) \times \R $.
Then, the classical trace  $\lim_{\epsilon \to 0+}  \partial_{x,t}^\alpha v(\epsilon,t)$ exists pointwise for every $t \in \R$ and $\alpha \in \N^2$. Moreover, it is smooth in $t$ and coincides with 
$ \partial_{x,t}^\alpha  v|_{x=0}   $. 
In particular, $ v \in C^\infty  ( [0,\infty) \times \R )$. 

\end{lemma}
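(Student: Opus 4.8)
The plan is to read off the boundary behaviour of $v$ from its microlocal structure in the strip where $A\equiv1$, exactly as in the proof of Proposition~\ref{uniq-pro}, and then match the resulting one-sided classical traces with the distributional ones. Since $A\equiv1$ on $(-\infty,\delta)$, the operator $\square_A$ coincides with $\square=\frac14\partial_{z_-}\partial_{z_+}$ on $(0,\delta)\times\R$, where $z_\pm=t\pm x$. First I would use $\square v=0$ there to write $v=\rho_-^*F+\rho_+^*G$ for some $F,G\in\mathcal{D}'(\R)$, the two summands being the components $v^\rightarrow,v^\leftarrow$ with $WF(v^\rightarrow)\subset\Sigma^\rightarrow$ and $WF(v^\leftarrow)\subset\Sigma^\leftarrow$. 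Because these wave front sets are disjoint on the strip there is no cancellation, so $WF(v)=WF(\rho_-^*F)\cup WF(\rho_+^*G)$ there.

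Next I would promote $F$ and $G$ to genuine smooth functions. The hypothesis that $v$ is smooth on $(0,\infty)\times\R$ forces each summand to be smooth on the open strip $(0,\delta)\times\R$; since $\rho_-(x,t)=t-x$ is a submersion, smoothness of $\rho_-^*F$ on the strip gives $F\in C^\infty(\R)$ (fix $x_0\in(0,\delta)$ and vary $t$), and likewise $G\in C^\infty(\R)$. Hence $w(x,t):=F(t-x)+G(t+x)$ belongs to $C^\infty(\R^2)$ and agrees with $v$ on $(0,\delta)\times\R$. This already gives the existence and $t$-smoothness of the one-sided limits $\lim_{\epsilon\to0+}\partial^\alpha_{x,t}v(\epsilon,t)=\partial^\alpha_{x,t}w(0,t)$, and, together with the assumed interior smoothness, the conclusion $v\in C^\infty([0,\infty)\times\R)$.

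It then remains to identify these limits with the distributional traces $\partial^\alpha_{x,t}v|_{x=0}:=\iota^*_{x=0}(\partial^\alpha_{x,t}v)$. For this I would invoke the sequential continuity of $\iota^*_{x=0}:\mathcal{D}'_\Sigma(\R^2)\to\mathcal{D}'(\R)$. Writing $\tau_h(x,t)=(x+h,t)$ and $v_h:=\tau_h^*v$ for $h\in(0,\delta)$, the set $\Sigma$ is invariant under base translations, so $v_h\in\mathcal{D}'_\Sigma(\R^2)$, and I would argue that $v_h\to v$ in $\mathcal{D}'_\Sigma$ as $h\to0+$. The line $\{x=h\}$ lies in the open half-space, where $v$ is smooth and equals $w$, so $\iota^*_{x=0}(\partial^\alpha v_h)=\iota^*_{x=h}(\partial^\alpha v)=(\partial^\alpha w)(h,\cdot)$, the pullback of a smooth function being its classical restriction. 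Passing to the limit and using $w\in C^\infty$,
\[
\partial^\alpha_{x,t}v|_{x=0}=\lim_{h\to0+}(\partial^\alpha_{x,t}w)(h,\cdot)=(\partial^\alpha_{x,t}w)(0,\cdot)=\lim_{\epsilon\to0+}\partial^\alpha_{x,t}v(\epsilon,\cdot),
\]
which is the asserted identity.

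I expect the crux to be the convergence $v_h\to v$ in the finer topology of $\mathcal{D}'_\Sigma$, as opposed to mere $\mathcal{D}'$-convergence, since only the former licenses the continuity of the trace. Concretely one must show that the seminorms $\rho_{N,\Gamma,\phi}(v_h-v)$ vanish as $h\to0+$ for every admissible $(N,\Gamma,\phi)$; from the identity $\widehat{(\tau_h^*v)\phi}(\xi)=e^{ih\xi_x}\,\widehat{v\,(\tau_{-h}^*\phi)}(\xi)$ and the fact that $\operatorname{supp}\phi\times\Gamma$ remains disjoint from $\Sigma$ for small $h$, this reduces to the uniform rapid decay of the localized Fourier transform of $v$ off $\Sigma$, but the estimate must be made uniform in the translation parameter $h$. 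By contrast, the splitting of $v$ and the promotion of $F,G$ to smooth functions are routine once the representation $v=\rho_-^*F+\rho_+^*G$ on the strip is available.
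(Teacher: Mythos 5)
Your argument is correct and follows essentially the same route as the paper: exploit that $A\equiv 1$ near $x=0$ to write $v$ in d'Alembert form $F(t-x)+G(t+x)$ with smooth profiles, then compare the classical one-sided limit with the distributional trace by translating in $x$ and invoking the continuity of $\iota^*_{x=0}$ on $\mathcal{D}'_\Sigma(\R^2)$. The step you flag as the crux --- convergence of the translates in the $\mathcal{D}'_\Sigma$ topology --- is exactly the point the paper also relies on (and dispatches with ``it is easy to check''), so your level of detail matches the original.
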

\begin{proof}
Let us deduce the claim in the case  $\alpha = 0$. The general claim follows by replacing $v$ with $\partial_{x,t}^\alpha v$. 
Since $v$  is continuous in $(0,\infty) \times \R$, the function $v_\epsilon(x,t) := v(x+\epsilon,t) $ is well defined pointwise on $(-\epsilon,\infty) \times \R$. Moreover, $v_\epsilon|_{x=0} (t) = v(\epsilon,t)$ pointwise. 
For $0 < \epsilon < \delta $, 
$v_\epsilon $ obeys the wave equation 
$\square v_\epsilon  = 0$ in $(-\epsilon, \delta- \epsilon) \times \R $. Moreover, the wave has smooth Cauchy data on the surface $x=0$. Thus, 
\[
v_\epsilon(x,t) = F_\epsilon( x-t) + G_\epsilon( x+t)
\]
near the Cauchy surface 
for some smooth $F_\epsilon,G_\epsilon$ determined by the data. Moreover, 
\[
v_{\epsilon}(x,t) =v_{\epsilon'} (x+ \epsilon-\epsilon',t) 
\]
so we may write 
\begin{align}
F_\epsilon(  r ) = F (  r + \epsilon ) 
\quad \quad 
G_\epsilon(  r ) = G (  r + \epsilon ) 
\end{align}
for some $F,G \in C^\infty(\R)  $. 
Hence, the pointwise limit 
\[
\lim_{\epsilon \to 0+} ( v_\epsilon|_{x=0} (t) )= \lim_{\epsilon \to 0+} v(\epsilon,t) = F(-t) + G(t)
\]
from the right  
exists and is smooth. The convergence is in fact uniform for $t $ in a compact set. 
It  is easy to check that  $v_\epsilon$
converges in $\mathcal{D}'_\Sigma(\R^2)$
to $v$. 
 As the boundary restriction is continuous on  $\mathcal{D}'_\Sigma (\R^2)$, the limit $ \lim_{\epsilon \to 0} ( v_\epsilon|_{x=0} ) $ in $\mathcal{D}'(\R) $
exists and equals to $v|_{x=0}$. 
This is the weak limit. 
Recall that also the pointwise limit exists. Moreover, the convergence to it is uniform in compact sets. Hence, the pointwise limit matches with the weak limit. 
\end{proof}

Thanks to the propositions above, we can now define the following continuous operators:

\begin{definition}[Dirichlet-to-Neumann, Neumann-to-Dirichlet operators]\label{DN-def}
Let $$\mathcal{D}'(\R)_{+} := \{ f \in \mathcal{D}'(\R) : \text{supp} (f) \subset (t_0,\infty), \ \text{for some} \ t_0 \in \R \}.$$
We define
\[
\DN  :  \mathcal{D}'(\R)_{+} \to  \mathcal{D}'(\R)_{+}, \quad  
 \DN (f) := \partial_x u |_{x=0},
 \]
  where  $u \in \mathcal{D}_\Sigma'(\R^2)$ satisfies the conditions (\ref{ibvp1}-\ref{ibvp4})  for $t_0 < \inf \text{supp} (f)$, $\mu=0$ and $f$ as the Dirichlet boundary value.  
Analogously, we define 
\[
\ND :  \mathcal{D}'(\R)_{+} \to  \mathcal{D}'(\R)_{+}, \quad  
\ND (f) :=  u |_{x=0}, 
\]
where  $u  \in \mathcal{D}_\Sigma'(\R^2)$ satisfies  the conditions (\ref{ibvp1}-\ref{ibvp4})  for $t_0 < \inf \text{supp} (f)$, $\mu=1$ and $f$ as the Neumann boundary value. 
We call $\Lambda_{DN}$ and $\Lambda_{ND}$ the Dirichlet-to-Neumann (DN) map  and Neumann-to-Dirichlet (ND) map, respectively. 
These maps are continuous on $\mathcal{D}'(\R)_{t_0} := \{ f \in \mathcal{D}'(\R) : \text{supp} (f) \subset (t_0,\infty) \}$ for a fixed $t_0 \in \R$. 
\end{definition}

\section{Derivation of Theorem \ref{themain}}\label{derivation-section}

Theorem \ref{themain} is proven in this section. There are several preliminary steps before the actual proof. 
Namely, we shall construct the following part of the diagram \eqref{tree1}:
\[
\xymatrix{
\text{Proposition \ref{prop:integration-by-parts}}  \ar[r] \ar[dr] & \text{Lemma \ref{weaklemma}} \ar[r] & \text{Proposition \ref{L2-con}}  \ar[r] & \text{Proof of Theorem \ref{themain}} \\
    & \text{Lemma \ref{E-lemma}}  \ar[ur] & & \ar[u] 
    {\color{gray}\text{(Appendix \ref{sondhi_app})}}
    \\
    & \text{Lemma \ref{K-lemma}} \ar[u] \\
    & \ar[u] 
    {\color{gray}\text{(Appendix \ref{energy-estimate-sec})}}
    }
\]

Throughout this section, we consider the following:
Let $\chi:\R\to\R$ be a smooth function  such that $\text{supp}(\chi) \subset (0,\infty) $ and $\chi |_{[\delta,\infty)}=1$ for some $\delta >0$. Denote by $\square_A $ the second order differential operator 
\begin{equation}
  \label{def-wave-op}
 \square_A v(t,x) :=   \partial_t^2v(t,x)   - \frac{1}{A(x)} \partial_x ( A(x) \partial_x v(x,t)).
\end{equation}
where $A \in C^\infty(\R)$ is admissible (cf. Definition~\ref{def:admissible}). 
Given $\omega\in \mathcal{S}'(\R)$, we let $u=u_\omega \in \mathcal{D}_\Sigma' (\R^2) \subset \mathcal{D}' ((0,\infty) \times \R)$ solve
\begin{align}
 & \square_A u(x,t)  = 0,  && \text{for} \quad (x,t) \in    (0,\infty) \times \R ,\label{eq:wave-u}\\
 & \partial_x u |_{x=0} (t)  = \chi (t) \omega (t), &&\text{for} \quad  t \in \R,\label{eq:neumann-u}\\
 & u (x,t)= 0, &&\text{for} \quad (x,t) \in (0,\infty) \times ( - \infty, 0) .\label{eq:initial-u}
\end{align} 
As shown in Section \ref{IBVP}, such a solution exists and is unique, when considered as an element in $\mathcal{D}' ((0,\infty) \times \R)$. 
For $\phi \in C^\infty_c(\R)$, let $\tilde{u} = \tilde{u}_\phi \in C^\infty ( [0,\infty) \times \R)$ be a solution to the time-reversed initial-boundary value problem
\begin{align}
 & \square_A \tilde{u}(x,t)  = 0,  && \text{for} \quad (x,t) \in    (0,\infty) \times \R ,\label{tr:eq:wave-u}\\
 & \partial_x \tilde{u} |_{x=0} (t)  =  \phi(t), &&\text{for} \quad  t \in \R,\label{tr:eq:neumann-u}\\
 & \tilde{u} (x,t)= 0, &&\text{for} \quad (x,t) \in (0,\infty) \times ( t_0 , \infty ) .\label{tr:eq:initial-u}
\end{align} 
where $t_0 > \max \text{supp}(\phi) $. 
Existence and uniqueness in $(0,\infty) \times \R$ for $\tilde{u}$ follows from the theory developed for the forwards propagating solution $u$. 
One checks that the relation between them is 
\begin{equation}\label{rev-fut}
\tilde{u}_\phi (x,t) = u_{   \psi } (x,t_0-t), 
\end{equation}
where $\psi (t) := \phi(t_0 - t)$. 
We define the time-reversed Neumann-to-Dirichlet map $\tilde\Lambda_{ND} : C^\infty_c (\R) \to  C^\infty( \R)$ by $  \tilde\Lambda_{ND} (\phi) = \tilde{u}_{ \phi }|_{x=0} $.  
The following proposition shall be applied. 

\begin{proposition}\label{prop:integration-by-parts}
Let $A \in C^\infty(\R)$ be admissible. 
Let $f \in \mathcal{D}'(\R)$ be supported in $(0,\infty)$. 
Then, 
  \begin{equation}
  \langle \Lambda_{ND} (f), \phi \rangle =    \langle f , \tilde\Lambda_{ND} ( \phi )  \rangle, 
  \end{equation}
   {for all   $\phi \in C_c^\infty(\R)$.}
\end{proposition}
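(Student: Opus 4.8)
The plan is to prove the identity first for smooth data, where it is a genuine Green-type reciprocity, and then to extend it to a distributional $f$ by continuity. The whole argument hinges on finite speed of propagation: by Lemma~\ref{weaker-exist} the forward solution $u=u_f$ is supported in $\{\,t\ge \min\supp f,\ 0\le x\le t-\min\supp f\,\}$, while by the relation \eqref{rev-fut} the time-reversed solution $\tilde u=\tilde u_\phi$ is supported in a backward cone that is bounded above in $t$ by $t_0$. Consequently the product $u_f\,\tilde u_\phi$ and all of its derivatives are \emph{compactly supported} in $[0,\infty)\times\R$; this is precisely what licenses every integration by parts and the vanishing of every boundary term at infinity below.

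For the smooth case I would take $f\in C^\infty(\R)$ with $\supp f\subset(0,\infty)$, so that $u_f\in C^\infty([0,\infty)\times\R)$ (interior smoothness from the propagation-of-singularities argument used in Proposition~\ref{uniq-pro}, smoothness up to $x=0$ from Lemma~\ref{ooiiqweh}). Starting from the identity $0=\int_\R\int_0^\infty\big[(\square_A u)\tilde u-u(\square_A\tilde u)\big]\,A\,dx\,dt$, which holds because both waves solve $\square_A=0$, I integrate the $\partial_t^2$ terms by parts in $t$ and the $\partial_x(A\partial_x\,\cdot\,)$ terms by parts in $x$. The temporal boundary terms vanish since $u=0$ for $t<0$ and $\tilde u=0$ for $t>t_0$; the spatial boundary term at $x=+\infty$ vanishes by compact support; and the two symmetric interior integrals cancel. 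What survives is the single contribution at $x=0$,
\[
\int_\R\big(\tilde u\,A\,\partial_x u-u\,A\,\partial_x\tilde u\big)\big|_{x=0}\,dt=0.
\]
Using $A(0)=1$ (admissibility), together with $\partial_x u|_{x=0}=f$, $\partial_x\tilde u|_{x=0}=\phi$, $u|_{x=0}=\Lambda_{ND}(f)$ and $\tilde u|_{x=0}=\tilde\Lambda_{ND}(\phi)$, this reads $\int_\R \tilde\Lambda_{ND}(\phi)\,f\,dt=\int_\R \Lambda_{ND}(f)\,\phi\,dt$, i.e. exactly $\langle f,\tilde\Lambda_{ND}(\phi)\rangle=\langle \Lambda_{ND}(f),\phi\rangle$.

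To reach a general $f\in\mathcal{D}'(\R)$ with $\supp f\subset(0,\infty)$, I would choose mollifications $f_n\in C_c^\infty(\R)$ with supports contained in a fixed interval $(t_1,\infty)$, $t_1>0$, such that $f_n\to f$ in $\mathcal{D}'(\R)_{t_1}$. Continuity of $\Lambda_{ND}$ (Definition~\ref{DN-def}) gives $\langle\Lambda_{ND}(f_n),\phi\rangle\to\langle\Lambda_{ND}(f),\phi\rangle$. On the other side, $\tilde\Lambda_{ND}(\phi)=\tilde u|_{x=0}$ is smooth and supported in $(-\infty,t_0]$, so after fixing $\eta\in C_c^\infty(\R)$ with $\eta\equiv1$ on $[t_1,t_0]$ one sets $\langle f,\tilde\Lambda_{ND}(\phi)\rangle:=\langle f,\eta\,\tilde\Lambda_{ND}(\phi)\rangle$, a well-defined pairing that is continuous in $f$ because $\eta\,\tilde\Lambda_{ND}(\phi)\in C_c^\infty(\R)$. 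Passing to the limit in the Step-1 identity applied to each $f_n$ then yields the claim.

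The hard part is not the algebraic Green's identity, which is routine, but the two rigor issues surrounding it. First, one must genuinely deduce compact support of $u_f\,\tilde u_\phi$ from the cone statements, so that every boundary term at infinity is legitimately zero. Second, in the distributional step the two pairings appearing in the statement are each between a non-compactly-supported distribution and a non-compactly-supported smooth function; the care lies in making them well-defined through their bounded overlapping supports and the cutoff $\eta$, and in checking that this is compatible with the topology in which $\Lambda_{ND}$ is continuous.
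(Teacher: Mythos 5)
Your proposal is correct and follows essentially the same route as the paper: mollify $f$ to reduce to the smooth case, apply Green's identity against the time-reversed solution $\tilde u_\phi$ with the boundary terms at $t=\pm\infty$ and $x=+\infty$ killed by the forward/backward support cones (finite speed of propagation), and pass to the limit using the continuity of $\Lambda_{ND}$ on $\mathcal{D}'(\R)_{t_0}$. The only differences are cosmetic (the paper unrolls the identity starting from $\int u_\varepsilon(0,t)\phi(t)\,dt$ rather than from the symmetric form, and your $f_n$ need not be compactly supported, only smooth with support in a fixed $(t_1,\infty)$, which is all your argument actually uses).
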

\begin{proof}
Let $\psi  \in C_c^\infty(\R)$ be a mollifier and denote $\psi_\varepsilon(t) = \frac{1}{\varepsilon} \psi ( \varepsilon t)$. 
For  sufficiently small values of $\varepsilon >0 $, the smooth function $f_\varepsilon := \psi_\varepsilon *  f $ is supported in $(0,\infty)$. 
Let us denote by $u_\varepsilon  $ the smooth wave $u_{f_\varepsilon}$. 
A straightforward computation gives 
\begin{align}
\langle  \ND (f_\varepsilon), \phi \rangle &= \int_{\R} u_{\varepsilon} (0,t)  \phi (t) dt =  \int_{\R} u_{\varepsilon} (0,t) A(0) \partial_x \tilde{u}_{\phi} (0,t)   dt \\
& {= - \int_{\R} \int_0^\infty   \partial_x [   u_{\varepsilon} (x,t)  A(x)  \partial_x \tilde{u}_{\phi} (x,t)   ]  dx dt 
+
 \int_{\R} \lim_{x \to \infty}  \partial_x [   u_{\varepsilon} (x,t)  A(x)  \partial_x \tilde{u}_{\phi} (x,t)   ]   dt} \\
&= - \int_{\R} \int_0^\infty   \partial_x [   u_{\varepsilon} (x,t)  A(x)  \partial_x \tilde{u}_{\phi} (x,t)   ]  dx dt + { \int_{\R}  0 dt } \label{roqw13} \\
&= -\int_{\R}\int_0^\infty  \bigg[     ( \partial_x u_{\varepsilon} (x,t)   ) A(x)  \partial_x \tilde{u}_{\phi} (x,t)   +  u_{\varepsilon} (x,t)  \partial_x (A(x) \partial_x \tilde{u}_{\phi} (x,t) )  \bigg] dx dt. 
\end{align}
{where the  identity \eqref{roqw13} is due to finite speed of propagation. Indeed, the wave 
$u_{\varepsilon} (x,t)$ has null Cauchy data on $(0,\infty) \times \{0\}$  and therefore  vanishes in the wedge $\{(x,t) \in (0,\infty) \times \R : |t| < x\}$. Further, }
%
integration by parts in $x$ yields 
\begin{align}
&\langle  \ND (f_\varepsilon), \phi \rangle  = \int_{\R} \int_0^\infty  \bigg[    \tilde{u}_{\phi} (x,t)    \partial_x ( A(x) \partial_x u_{\varepsilon} (x,t)   )   +( A(x) \partial_x u_{\varepsilon} (x,t) )   \partial_x \tilde{u}_{\phi} (x,t)  \bigg] dx dt.  \\
 &+ \int_{\R}   \bigg[  \tilde{u}_{\phi} (0,t)      \partial_x u_{\varepsilon} (0,t)      + u_{\varepsilon} (0,t)    \partial_x \tilde{u}_{\phi} (0,t)  \bigg] dt \\
& = \int_{\R} \int_0^\infty   \partial_x [  A(x) u_{\varepsilon} (x,t)   \partial_x \tilde{u}_{\phi} (x,t)  ]dx dt.  
 + \int_{\R}   \bigg[  \tilde{u}_{\phi} (0,t)      \partial_x u_{\varepsilon} (0,t)     + u_{\varepsilon} (0,t)    \partial_x \tilde{u}_{\phi} (0,t)  \bigg] dt \\
 & = -  \int_{\R}  \underbrace{A(0)}_{=1} u_{\varepsilon} (0,t)   \partial_x \tilde{u}_{\phi} (0,t)   dt.  
 +\int_{\R} \bigg[   \tilde{u}_{\phi} (0,t)      \partial_x u_{\varepsilon} (0,t)     + u_{\varepsilon} (0,t)    \partial_x \tilde{u}_{\phi} (0,t)  \bigg] dt \\
& =  \int_{\R}    \tilde{u}_{\phi} (0,t)      \partial_x u_{\varepsilon} (0,t)   dt =  \int_{\R}     \tilde{u}_{\phi} (0,t)     f_\varepsilon(t)   dt  = \langle f_\varepsilon, \tilde\Lambda_{ND} (\phi) \rangle.
\end{align}
By taking the limit $\varepsilon \to 0$ and applying continuity of $\ND$ on $\mathcal{D}'(\R)_{0} := \{ f \in \mathcal{D}'(\R) : \text{supp} (f) \subset (0,\infty) \}$, we conclude
\[
\langle  \ND ( f ), \phi \rangle  =  \langle f , \tilde\Lambda_{ND} (\phi) \rangle.
\]
\end{proof}

\begin{definition}
  Denote by $\tau_s$ the time-shift $\tau_s(t) = t+s$. Let $T>0$ and let $A$ be admissible. 
  The correlation operator $C_T  : {\mathcal{S}'(\R ) \times  }C^\infty_c ( \R) \to \mathcal{D}'(\R)$ is defined by 
  \begin{equation}\label{eq:correlation-function-def}
    \langle C_T (\omega, \phi ),   \psi  \rangle := \frac1T\int_0^T \langle \tau_s^* \ND (\chi \omega ), \phi  \rangle \langle \tau_s^* \omega,  \psi  \rangle ds
  \end{equation}
  where $\phi, \psi \in C^\infty_c(\R)$. 
   { We shall  omit the parameter  $\omega$ and denote $C_T (\phi):= C_T(\omega,\phi)  \in \mathcal{D}'(\R)$ whenever there is no danger of confusion. 
   }
 {The correlation operator is inherently stochastic and may be regarded as an operator-valued random variable. Alternatively, via Schwartz’s kernel theorem, it can be viewed as a $\mathcal{D}'(\R^2)$-valued random variable.   }
 \end{definition}

We would like to study the $L^2(\mathbb{P})$-limit of the correlation operator as $T \to \infty$. 
The first step in that is to compute the ``weak'' limit. 

 \begin{lemma}\label{weaklemma}
 Let $\phi , \psi \in C^\infty_c(\R)$. Then 
\[ \lim_{T \to \infty} \E \langle     C_T ( \phi) , \psi \rangle = \langle  \tilde\Lambda_{ND} (\phi  ) ,   \psi   \rangle.
\]
 \end{lemma}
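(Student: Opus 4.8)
The plan is to expand the defining integral \eqref{eq:correlation-function-def}, pass the expectation inside, and reduce each of the two paired factors to a white noise evaluation $W_g = \langle \cdot, g\rangle$ against an explicit $L^2$ function; the Gaussian covariance identity $\E(W_g W_h) = \langle g, h\rangle_{L^2}$ then turns the expectation into a deterministic $L^2$ pairing whose large-$T$ average I can compute directly.

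First I would record the pull-back identity $\langle \tau_s^* v, \phi\rangle = \langle v, \tau_{-s}^*\phi\rangle$ for the translation $\tau_s(t)=t+s$, valid for $v\in\mathcal{D}'(\R)$ and $\phi\in C_c^\infty(\R)$. Applying it to the first factor and using linearity of $\ND$ in $\omega$ together with Proposition \ref{prop:integration-by-parts} (the input $\chi\omega$ is supported in $(0,\infty)$, as required there) gives
\[
\langle \tau_s^* \ND(\chi\omega), \phi\rangle = \langle \ND(\chi\omega), \tau_{-s}^*\phi\rangle = \langle \chi\omega, \tilde\Lambda_{ND}(\tau_{-s}^*\phi)\rangle = \langle \omega, \chi\,\tilde\Lambda_{ND}(\tau_{-s}^*\phi)\rangle.
\]
Time-translation invariance of $\square_A$ (the coefficient $A$ depends only on $x$) yields the covariance $\tilde\Lambda_{ND}(\tau_{-s}^*\phi) = \tau_{-s}^*\tilde\Lambda_{ND}(\phi)$, so the first factor equals $W_{a_s}$ with $a_s := \chi\cdot\tau_{-s}^*(\tilde\Lambda_{ND}\phi)$; the second factor is $W_{b_s}$ with $b_s := \tau_{-s}^*\psi$.

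A key observation that makes this rigorous is that $a_s$ is actually a smooth, compactly supported function, so no decay estimate is needed at this stage. Indeed, writing $g:=\tilde\Lambda_{ND}\phi$, the time-reversed Dirichlet trace $g$ vanishes for $t>t_0$ while $\chi$ vanishes for $t\le 0$, hence $a_s(t)=\chi(t)\,g(t-s)$ is supported in the bounded interval $(0,t_0+s]$ and lies in $C_c^\infty(\R)\subset L^2(\R)$. Granting this, $\E(W_{a_s}W_{b_s}) = \langle a_s, b_s\rangle_{L^2}$, and after the substitution $r=t-s$,
\[
\langle a_s, b_s\rangle_{L^2} = \int_\R \chi(t)\,g(t-s)\,\psi(t-s)\dt = \int_\R \chi(r+s)\,h(r)\,\mathrm{d}r, \qquad h := (\tilde\Lambda_{ND}\phi)\,\psi \in C_c^\infty(\R).
\]
The interchange of $\E$ with $\tfrac1T\int_0^T(\cdot)\ds$ is justified by Fubini, using $\E|W_{a_s}W_{b_s}| \le \|a_s\|_{L^2}\|b_s\|_{L^2}$ and the local boundedness of $s\mapsto\|a_s\|_{L^2}$.

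Finally I would take $T\to\infty$. Since $\chi\equiv 1$ on $[\delta,\infty)$ and $\supp h$ is compact, for all sufficiently large $s$ one has $\chi(r+s)=1$ on $\supp h$, so the inner integral is eventually the constant $\int_\R h = \langle\tilde\Lambda_{ND}\phi,\psi\rangle$; a function that is eventually constant has Cesàro mean converging to that constant. Therefore $\lim_{T\to\infty}\E\langle C_T(\phi),\psi\rangle = \langle\tilde\Lambda_{ND}\phi,\psi\rangle$, as claimed. The only genuinely delicate points are the bookkeeping of the pull-back/adjoint conventions and confirming the compact support of $a_s$; everything else reduces to the covariance identity and an elementary averaging limit.
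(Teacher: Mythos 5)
Your proposal is correct and follows essentially the same route as the paper: reduce both factors to white noise evaluations against explicit $C_c^\infty$ test functions via Proposition \ref{prop:integration-by-parts} and translation invariance of $\tilde\Lambda_{\text{ND}}$, apply the covariance identity \eqref{itolike}, interchange $\E$ with the time average by Fubini, and pass to the limit. The only cosmetic difference is the final step, where you observe that the $s$-integrand is eventually constant and invoke the Ces\`aro mean, while the paper forms $\chi_T(t) = \frac{1}{T}\int_0^T \chi(t+s)\,ds$, shows $\chi_T \to 1$ pointwise with a uniform bound, and applies dominated convergence --- the two arguments are interchangeable here.
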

 
 \begin{proof}
Denote
  \begin{equation}
    X_s := \langle \tau_s^* \ND (\chi \omega ), \phi  \rangle, \qquad Y_s := \langle \tau_s^* \omega, \psi \rangle.
  \end{equation}
Proposition \ref{prop:integration-by-parts} implies that 
$
X_s =  \langle  \omega , \tilde\phi \rangle$ where $ \tilde\phi := \chi  \tilde\Lambda_{ND} (\tau_{-s}^* \phi ) \in C^\infty_c(\R) \subset \mathcal{S}(\R). $
Similarly, $Y_s = \langle \omega, \tilde\psi \rangle$ for $\tilde\psi:= \tau_{-s}^* \psi \in C^\infty_c(\R) \subset \mathcal{S}(\R) $. 
By the isometry \eqref{itolike},
\[
 \E (X_s Y_s) = \langle \tilde\phi , \tilde\psi \rangle =\langle  \chi  \tilde\Lambda_{ND} ( \tau_{-s}^*\phi  ),   \tau_{-s}^*\psi  \rangle .    
\]
It is easy to check that  $\tilde\Lambda_{ND} ( \tau_{-s}^* \phi ) =   \tau_{-s}^*\tilde\Lambda_{ND} (\phi)$. Substituting this in the previous equation leads to 
\begin{align}
 \E (X_s Y_s) =\langle  \chi  \tau_{-s}^* \tilde\Lambda_{ND} (\phi  ) ,  \tau_{-s}^*   \psi  \rangle   = \langle  ( \tau_{s}^* \chi  )   \tilde\Lambda_{ND} (\phi  ) ,   \psi  \rangle. 
 \label{k10}
 \end{align}
 Moreover, by Hölder's inequality and  the isometry \eqref{itolike}, 
 \[
  \E |X_s Y_s| \leq  \| X_s \|_{L^2(\mathbb{P})}  \| Y_s \|_{L^2(\mathbb{P})} =   \| \tilde\phi \|_{L^2(\R)}  \| \tilde\psi  \|_{L^2(\R)} < \infty. 
 \]
 Hence, $\frac{1}{T} \int_0^T  \E  |X_s Y_s|  ds < \infty.$
By Fubini's theorem,
 \[
\E \langle C_T ( \phi) ,\psi \rangle = \E  \frac{1}{T}  \int_0^T   X_s Y_s ds  = \frac{1}{T}  \int_0^T    \E (X_s Y_s)  ds .
\]
Applying \eqref{k10} we obtain 
\begin{align}
\E \langle C_T ( \phi) ,\psi \rangle =  \frac{1}{T}  \int_0^T    \langle  ( \tau_{s}^* \chi  )   \tilde\Lambda_{ND} (\phi  ) ,   \psi  \rangle ds  =     \langle   \chi_T  \tilde\Lambda_{ND} (\phi  ) ,   \psi   \rangle ,\label{wooq}
\end{align}
where 
\[
\chi_T(t) :=   \frac{1}{T}  \int_0^T   \chi  (t+s)  ds  .
\]
We have 
\begin{align}
&|1- \chi_T(t)  | 
= \left| \frac{1}{T} \int_0^T (1-\chi(t+s))  ds   \right|
=
 \left| \frac{1}{T} \int_0^{\min\{T,\delta-t\}} (1-\chi(t+s))  ds   \right|
 \\
& \leq 
 \frac{1}{T} \int_0^{\min\{T,\delta-t\}}  \left| (1-\chi(t+s))    \right|ds
 \leq
  \frac{1}{T} \int_0^{\delta-t}  (1+\|\chi\|_{L^\infty(\R)}) ds 
  =  
  \frac{ (1+\|\chi\|_{L^\infty(\R)}) (\delta - t)}{T},
\end{align}
which implies that $\chi_T(t) \to 1$ pointwise as $T$ grows.  Moreover, 
\[
| \chi_T(t) |  \leq  \left| \frac{1}{T} \int_0^T \chi(t+s)   ds  \right|  \leq   \frac{1}{T} \int_0^T \left| \chi(t+s)     \right|ds
\leq   \frac{1}{T} \int_0^T   \| \chi \|_{L^\infty(\R)} ds  = \| \chi \|_{L^\infty(\R)}.
\]
Consequently,
$
 \chi_T  \tilde\Lambda_{ND} (\phi  )   \psi   \to \tilde\Lambda_{ND} (\phi  )   \psi  
$ 
pointwise and $| \chi_T  \tilde\Lambda_{ND} (\phi  )   \psi |$ is dominated by the integrable function $ \| \chi \|_{L^\infty(\R)} | \tilde\Lambda_{ND} (\phi  )   \psi| $.
By the dominated convergence theorem,  
\[
\lim_{T \to \infty}\langle   \chi_T  \tilde\Lambda_{ND} (\phi  ) ,   \psi   \rangle
=
\lim_{T \to \infty} \int_{\R}   \chi_T(t)  \tilde\Lambda_{ND} (\phi  )(t)    \psi(t) dt
=
\int_{\R}  \tilde\Lambda_{ND} (\phi  )(t)    \psi(t) dt
=
 \langle    \tilde\Lambda_{ND} (\phi  ) ,   \psi   \rangle .
\]
Combining this with \eqref{wooq} yields the result.
\end{proof}

The second step in computing the $L^2(\mathbb{P})$-limit of $C_T$ is the following:

\begin{lemma}\label{E-lemma}
Let  $\psi \in C^\infty_c(\R)$ and let $\phi \in C^\infty_c( \R)$ be such that $\int_{\R} \phi(t)dt  = 0$. 
Then 
\[
\lim_{T \to \infty}  \E \left(  \langle C_T (\phi ),   \psi  \rangle^2 \right)= \langle \tilde\Lambda (\phi) ,\psi \rangle^2 .
\] 
\end{lemma}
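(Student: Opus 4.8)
The plan is to compute the second moment by expanding the square and exploiting the Gaussian structure of white noise. I reuse the notation from the proof of Lemma \ref{weaklemma}: set $X_s := \langle \tau_s^*\ND(\chi\omega),\phi\rangle = \langle\omega,\tilde\phi_s\rangle$ and $Y_s := \langle\tau_s^*\omega,\psi\rangle = \langle\omega,\tilde\psi_s\rangle$, where $\tilde\phi_s := \chi\,\tilde\Lambda_{ND}(\tau_{-s}^*\phi) = \chi\cdot\tau_{-s}^* g$ with $g := \tilde\Lambda_{ND}(\phi)$, and $\tilde\psi_s := \tau_{-s}^*\psi$. Both test functions lie in $L^2(\R)$, with $\|\tilde\psi_s\|_{L^2} = \|\psi\|_{L^2}$ and $\|\tilde\phi_s\|_{L^2}^2 = \int\chi(t)^2 g(t-s)^2\,dt$. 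Writing $\langle C_T(\phi),\psi\rangle = \frac1T\int_0^T X_sY_s\,ds$ and squaring,
\[
\E\big(\langle C_T(\phi),\psi\rangle^2\big) = \frac{1}{T^2}\int_0^T\int_0^T \E\big(X_sY_sX_{s'}Y_{s'}\big)\,ds\,ds'.
\]
Exchanging $\E$ with the double integral is justified by Fubini once we note that, by the Cauchy--Schwarz inequality in $\mathbb{P}$ and the Gaussian fourth-moment identity $\E W_h^4 = 3\|h\|_{L^2}^4$, the integrand $\E|X_sY_sX_{s'}Y_{s'}|$ is bounded uniformly in $(s,s')$, all four $L^2$-norms above being uniformly bounded (see below).

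Next I invoke the Gaussian (Isserlis--Wick) moment formula. Since $\omega$ is white noise, the four variables $X_s,Y_s,X_{s'},Y_{s'}$ are jointly centered Gaussian (they are pairings $W_h$ with $h\in L^2$, extended as in Section \ref{Gaussi}), so
\[
\E(X_sY_sX_{s'}Y_{s'}) = \E(X_sY_s)\E(X_{s'}Y_{s'}) + \E(X_sX_{s'})\E(Y_sY_{s'}) + \E(X_sY_{s'})\E(X_{s'}Y_s).
\]
Integrating the first product over $[0,T]^2$ and dividing by $T^2$ yields exactly $\big(\E\langle C_T(\phi),\psi\rangle\big)^2$, which tends to $\langle\tilde\Lambda(\phi),\psi\rangle^2$ by Lemma \ref{weaklemma}. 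Thus it remains to show that the contributions of the two remaining covariance products vanish as $T\to\infty$; equivalently, that $\mathrm{Var}\,\langle C_T(\phi),\psi\rangle\to0$.

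The crucial analytic input is that $g = \tilde\Lambda_{ND}(\phi)\in L^2(\R)$, which is where the hypothesis $\int\phi = 0$ enters (via the decay estimates of Section \ref{energy-estimate-sec}); granting this, $\|\tilde\phi_s\|_{L^2}\le\|\chi\|_{L^\infty}\|g\|_{L^2}=:M$ uniformly in $s$. For the second product, Cauchy--Schwarz gives $|\E(X_sX_{s'})|\le M^2$, while $\E(Y_sY_{s'}) = \int\psi(r)\psi(r-(s'-s))\,dr =: \Psi(s'-s)$ is the autocorrelation of $\psi$, hence bounded, compactly supported and in $L^1$. Therefore
\[
\frac{1}{T^2}\int_0^T\!\!\int_0^T |\E(X_sX_{s'})|\,|\E(Y_sY_{s'})|\,ds\,ds' \le \frac{M^2}{T^2}\int_0^T\!\!\int_0^T |\Psi(s'-s)|\,ds\,ds' \le \frac{M^2\|\Psi\|_{L^1}}{T}\to0.
\]
For the third product, write $h(s,s'):=\E(X_sY_{s'}) = \int\tilde\phi_s(t)\psi(t-s')\,dt$; as a function of $s'$ this is a correlation of $\tilde\phi_s$ with $\psi$, so Young's inequality gives $\int_\R h(s,s')^2\,ds'\le\|\tilde\phi_s\|_{L^2}^2\|\psi\|_{L^1}^2\le M^2\|\psi\|_{L^1}^2$ uniformly in $s$. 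Using $|h(s,s')h(s',s)|\le\tfrac12(h(s,s')^2+h(s',s)^2)$ together with the symmetry $s\leftrightarrow s'$,
\[
\frac{1}{T^2}\Big|\int_0^T\!\!\int_0^T \E(X_sY_{s'})\E(X_{s'}Y_s)\,ds\,ds'\Big| \le \frac{1}{T^2}\int_0^T\!\!\int_0^T h(s,s')^2\,ds\,ds' \le \frac{M^2\|\psi\|_{L^1}^2}{T}\to0.
\]
Combining the three contributions yields the claim.

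The main obstacle is the decay statement $g=\tilde\Lambda_{ND}(\phi)\in L^2(\R)$, and this is genuinely where $\int\phi=0$ is indispensable. Already for the trivial profile $A\equiv1$ one computes by integration along characteristics that $\tilde\Lambda_{ND}(\phi)$ approaches a nonzero constant proportional to $\int\phi$ at large (reversed) times, so it fails to be square-integrable unless the mean vanishes; conversely, when $\int\phi=0$ this ``DC mode'' is annihilated and the trace is even compactly supported. For admissible $A$, echoes from the region $x>\delta$ prevent compact support, and one must instead appeal to the exponential energy decay of Section \ref{energy-estimate-sec} (cf. \cite{arnold22_expon_time_decay_one_dimen}) to conclude $g\in L^2$. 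Apart from this, the remaining care is bookkeeping: verifying joint Gaussianity and the uniform $L^2$-bounds needed for Fubini and for the Cauchy--Schwarz and Young estimates above.
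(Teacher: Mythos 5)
Your proof is correct and follows the same skeleton as the paper's: both expand $\E(X_sY_sX_{s'}Y_{s'})$ via the Isserlis--Wick formula, identify the first covariance product as the square of the mean (hence convergent to $\langle\tilde\Lambda(\phi),\psi\rangle^2$ by Lemma \ref{weaklemma}), and show the two cross terms vanish using decay of $\tilde\Lambda_{ND}(\phi)$, which is exactly where $\int\phi=0$ enters. Where you diverge is in how the cross terms are killed. The paper changes variables to $\tilde s = s-r$, exploits that $\langle\psi,\tau_{\tilde s}^*\psi\rangle$ restricts the inner integral to a bounded window, and then invokes the pointwise exponential bound $|\tilde\Lambda_{ND}(\phi)(t)|\le Ce^{-C'|t|}$ of Lemma \ref{K-lemma}. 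You instead use only the weaker fact $\tilde\Lambda_{ND}(\phi)\in L^2(\R)$, combined with the compactly supported autocorrelation of $\psi$ for the second term and Young's inequality plus the AM--GM/symmetrization trick for the third; this yields explicit $O(1/T)$ rates and a uniform bound $\E|X_sY_sX_{s'}Y_{s'}|\le C$ that cleanly justifies Fubini. Your route is somewhat more quantitative and requires less from the decay lemma (square-integrability rather than pointwise exponential decay), though in the paper's development the exponential estimate of Proposition \ref{1decay-pro} is what is actually available, and it of course implies the $L^2$ membership you need, so nothing is lost. One small point worth making explicit if you write this up: the identity $\tilde\phi_s=\chi\cdot\tau_{-s}^*g$ uses the translation equivariance $\tilde\Lambda_{ND}(\tau_{-s}^*\phi)=\tau_{-s}^*\tilde\Lambda_{ND}(\phi)$, which should be checked (the paper does so in passing), and the membership $\tilde\phi_s\in\mathcal S(\R)$ needed for joint Gaussianity follows because $\chi$ vanishes for $t\le 0$ while the time-reversed solution vanishes for large $t$.
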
  

\begin{proof}
Denote
  \begin{equation}
    X_s := \langle \tau_s^* \ND (\chi \omega ), \phi  \rangle, \qquad Y_s := \langle \tau_s^* \omega, \psi \rangle.
  \end{equation}
It follows from the definition of white noise measure that $  \langle \omega, \varphi \rangle $  is a Gaussian random variable with zero mean for every $\varphi \in \mathcal{S}$. 
Applying Proposition \ref{prop:integration-by-parts}, we get that 
$
X_s =  \langle  \omega , \tilde\phi_s \rangle$ where $ \tilde\phi_s := \chi  \tilde\Lambda_{ND} (\phi \circ \tau_{-s} ) \in C^\infty_c(\R) \subset \mathcal{S}(\R). $
Hence, $X_s$ is a Gaussian random variable with zero mean. 
Similarly, $Y_s = \langle \omega, \tilde\psi_s \rangle$ for $\tilde\psi_s:= \psi \circ \tau_{-s} \in C^\infty_c(\R) \subset \mathcal{S}(\R) $ so $Y_s$ is a Gaussian random variable with zero mean. 
According to Isserli's formula \cite{isserlis18_formul_produc_momen_coeff_any}, 
  \begin{align}\label{1eq:isserlis}
    \E[&X_sY_sX_{r}Y_{r}] = \E[X_sY_s] \E[X_{r}Y_{r}] 
       + \E[X_sX_{r}] \E[Y_sY_{r}] + \E[X_sY_{r}] \E[X_{r} Y_s]. 
  \end{align}
By the isometry \eqref{itolike}, 
  \begin{align}\label{isse2}
    \E[X_sY_sX_{r}Y_{r}] &= \langle  \tilde\phi_s,  \tilde\psi_s \rangle  \langle  \tilde\phi_{r},  \tilde\psi_{r}  \rangle+\langle  \tilde\phi_s,  \tilde\phi_{r} \rangle  \langle  \tilde\psi_{s},  \tilde\psi_{r}  \rangle+\langle  \tilde\phi_s,  \tilde\psi_{r} \rangle  \langle  \tilde\phi_{r},  \tilde\psi_{s}  \rangle. 
    \\
    &= \langle \tilde\Lambda_{ND} ( \tau_{-s}^* \phi  ),  \chi ( \tau_{-s}^* \psi ) \rangle \langle \tilde\Lambda_{ND} (\tau_{-r} ^* \phi ),  \chi (\tau_{-r}^* \psi )  \rangle \\
    &+\langle  \tilde\Lambda_{ND} (\tau_{-s}^* \phi ),  \chi^2  \tilde\Lambda_{ND} (\tau_{-r}^* \phi  )  \rangle \langle  \tau_{-s}^* \psi  ,   \tau_{-r}^* \psi  \rangle  \\
    &+ \langle   \tilde\Lambda_{ND} (\tau_{-s}^*\phi ),  \chi( \tau_{-r}^*\psi  ) \rangle \langle \tilde\Lambda_{ND} ( \tau_{-r}^*\phi  ),  \chi(  \tau_{-s}^*\psi ) \rangle. 
  \end{align}
One checks that $  \tilde\Lambda_{ND} ( \tau_{-s}^* \phi  ) =  \tau_{-s}^* \tilde\Lambda_{ND} (\phi)  $. Similarly, $  \tilde\Lambda_{ND} ( \tau_{-r}^* \phi  ) = \tau_{-r} ^*\tilde\Lambda_{ND} (\phi) $. 
Substituting this in the previous equation gives 
  \begin{align}\label{isse3}
   \E[X_sY_sX_{r}Y_{r}]  &= \langle \tilde\Lambda_{ND} (\phi  ),  ( \tau_{s}^* \chi  )   \psi  \rangle \langle \tilde\Lambda_{ND} (\phi ),   (\tau_{r}^* \chi   ) \psi  \rangle \\ 
       &+\langle  \tilde\Lambda_{ND} (\phi ),  (\tau_{{s}}^* \chi )^2   \tau_{{s-r}}^* \tilde\Lambda_{ND} (\phi )   \rangle \langle  \psi ,  \tau_{s-r}^*\psi  \rangle  \\
           &+ \langle  \tilde\Lambda_{ND} (\phi  ),  (\tau_s^* \chi ) (\tau_{s-r}^* \psi ) \rangle \langle   \tilde\Lambda_{ND} (\phi ), (\tau_r^* \chi ) ( \tau_{r-s}^* \psi  )\rangle. 
\end{align} 
Thus,
\begin{align}
\E  \langle  C_T (\phi ), \psi \rangle^2  &=  \frac{1}{T^2} \int_0^T \int_0^T  \E[X_sY_sX_{r}Y_{r}]  ds dr \\ 
&= + \frac{1}{T^2} \int_0^T \int_0^T   \langle \tilde\Lambda_{ND} (\phi  ),  (\tau_{s}^* \chi)   \psi  \rangle \langle \tilde\Lambda_{ND} (\phi ),   (\tau_{r}^*\chi   ) \psi  \rangle  ds dr\label{isse41} \\  
       &+ \frac{1}{T^2} \int_0^T \int_0^T\langle  \tilde\Lambda_{ND} (\phi ),  (\tau_{{s}}^* \chi )^2  (  \tau_{{s-r}}^*\tilde\Lambda_{ND} (\phi ) )  \rangle \langle  \psi ,  \tau_{s-r}^*\psi  \rangle  ds dr\label{isse42} \\
           &+  \frac{1}{T^2} \int_0^T \int_0^T\langle  \tilde\Lambda_{ND} (\phi  ),  (\tau_s^*\chi)  \tau_{s-r}^*\psi  \rangle \langle   \tilde\Lambda_{ND} (\phi ), ( \tau_r^*\chi )  \tau_{r-s}^*\psi  \rangle ds dr. \label{isse43}
\end{align}
The first term on the right equals $ \langle  \tilde\Lambda_{ND} (\phi  ),  \tilde\chi_T \psi  \rangle \langle  \tilde\Lambda_{ND} (\phi ), \tilde\chi_T  \psi  \rangle $, where $\tilde\chi_T (t)  = \frac{1}{T} \int^T_0 \chi (t+s) ds $.
The term converges {to}  $ \langle   \tilde\Lambda_{ND} (\phi  ),  \psi  \rangle^2$ as $ T \to \infty$. 
The second term can be written in the form
\begin{align}
&   \frac{1}{T^2} \int_0^T \int_0^{t_\psi} \langle   \tilde\Lambda_{ND} (\phi ),  (\tau_{{\tilde{s}+r}}^*\chi )^2   \tau_{{\tilde{s}}}^*\tilde\Lambda_{ND} (\phi )   \rangle \langle  \psi ,  \tau_{\tilde{s}}^* \psi  \rangle  d\tilde{s} dr ,
 \end{align}
 where $t_\psi := \text{diam}  ( \text{supp} ( \psi )   ) < \infty$ and $ \tilde{s} = s-r$. 
 We would like to show that this converges to zero as $T \to \infty$. 
Since the Cauchy-Schwarz inequality implies $| \langle  \psi ,   \tau_{\tilde{s}}^*\psi  \rangle|  \leq \| \psi \|^2_{L^2} < \infty$, the limit relies on the behaviour of  $ \langle   \tilde\Lambda_{ND} (\phi ),  (\tau_{{\tilde{s}+r}}^*\chi )^2  \tau_{{\tilde{s}}}^*\tilde\Lambda_{ND} (\phi )   \rangle $ as $r $ grows. 
It suffices to prove that 
\begin{equation}\label{19q}
|  \tilde\Lambda_{ND} (\phi)(t) | \leq C e^{-C'|t|}  , \quad \text{i.e.}  \quad |  \tilde{u}_\phi  (0,t)  | \leq C e^{-C'|t|}  , 
\end{equation}
 for some $C,C'>0$. 
Before going into that, let us study the last term \eqref{isse43}. It reads
\[
 \frac{1}{T^2} \int_0^T \int_0^T\langle  \tilde\Lambda_{ND} (\phi  ),  ( \tau_{\tilde{s}+r}^* \chi  ) \tau_{\tilde{s}}^* \psi  \rangle \langle   \tilde\Lambda_{ND} (\phi ), (\tau_r^* \chi )  \tau_{-\tilde{s}}^* \psi   \rangle d \tilde{s} dr.
\]
The supports of the test functions $ ( \tau_{\tilde{s}+r}^* \chi  ) \tau_{\tilde{s}}^* \psi  $ and  $(\tau_r^* \chi )  \tau_{-\tilde{s}}^* \psi  $  are shifted further as $\tilde{s}$ and $\tilde{r}$ grow. 
Hence, also this integral converges to $0$ provided the decay estimate \eqref{19q}. 
The estimate indeed holds, as shown in the lemma below. This finishes the proof. 
\end{proof}

\begin{lemma}\label{K-lemma}
Let $\phi \in C^\infty_c(\R)$ be such that $ \int_{\R} \phi (t) dt = 0$. Then, there are $C,C'>0$ such that the estimate \eqref{19q} holds.
\end{lemma}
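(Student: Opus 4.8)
The plan is to pass to the causal (forward) problem via the time-reversal identity \eqref{rev-fut} and then extract exponential decay from the radiation of energy to $x=+\infty$, using the zero-mean hypothesis to eliminate the one mode that an energy estimate cannot detect. By \eqref{rev-fut} we have $\tilde u_\phi(0,t)=u_\psi(0,t_0-t)$ with $\psi(t):=\phi(t_0-t)$, so that $\int_\R\psi=\int_\R\phi=0$. Since $u_\psi$ vanishes for negative times, $\tilde u_\phi(0,t)=0$ for $t>t_0$, and the bound \eqref{19q} is immediate as $t\to+\infty$; as $\tilde u_\phi(0,\cdot)$ is smooth and bounded on compact sets, after enlarging $C$ it suffices to prove
\[
|u_\psi(0,\tau)|\le C e^{-C'\tau},\qquad \tau\to+\infty.
\]

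Next I would exploit the free region $x<x_-$, where $A\equiv 1$ and $\square_A=\partial_t^2-\partial_x^2$, so that $u_\psi(x,\tau)=F(\tau-x)+G(\tau+x)$ with $F,G\in C^\infty(\R)$ vanishing on $(-\infty,0)$ (finite speed of propagation and zero Cauchy data). The Neumann condition $-F'(\tau)+G'(\tau)=\psi(\tau)$ integrates to $F(\tau)=G(\tau)-\int_{-\infty}^\tau\psi$, and since $\int_\R\psi=0$ this integral vanishes once $\tau>\max\supp\psi$. Hence $F=G$ there, and for large $\tau$
\[
u_\psi(0,\tau)=2G(\tau),\qquad \partial_\tau u_\psi(0,\tau)=2G'(\tau).
\]
This reduces the statement to the exponential decay of the incoming echo $G$, which is governed by the energy trapped between the boundary and the scatterer $\supp A'\subset[x_-,x_+]$.

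The decay of $G'$ (equivalently, of the local energy near $x=0$) is where I would invoke the energy estimate of Arnold et al.\ \cite{arnold22_expon_time_decay_one_dimen}, packaged here as Proposition \ref{1decay-pro}: at $x=0$ the Neumann boundary reflects with coefficient $+1$, while every interaction with the scatterer transmits a fixed positive fraction of the energy irreversibly into the constant region $x>x_+$, from which it radiates to $+\infty$ and never returns; the trapped energy therefore decreases geometrically on each round trip of fixed duration, giving $|G'(\tau)|\le Ce^{-C'\tau}$. To upgrade this to decay of $G$ itself I would show there is no permanent displacement, $G(\tau)\to 0$: integrating $\square_A u_\psi=0$ in $x$ and using $A(0)=1$ with finite speed yields $\frac{d}{d\tau}\int_0^\infty\partial_\tau u_\psi\,A\,dx=-\psi(\tau)$, whence $\int_0^\infty\partial_\tau u_\psi(\cdot,\tau)A\,dx=0$ for large $\tau$ by zero mean; splitting this integral into the decaying near field and the transmitted right-moving pulse in $x>x_+$ forces the far-field limit, hence the common constant of any static limit, to vanish. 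Then $u_\psi(0,\tau)=2G(\tau)=-2\int_\tau^\infty G'(s)\,ds$, and the exponential bound on $G'$ gives $|u_\psi(0,\tau)|\le (2C/C')e^{-C'\tau}$.

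The main obstacle is precisely this exponential rate: the dynamics are non-dissipative, so energy conservation alone yields no decay, and one must rule out trapped modes (real resonances of the cavity $[0,x_+]$) and quantify the radiation loss through the scatterer — this is the heart of Proposition \ref{1decay-pro} and of \cite{arnold22_expon_time_decay_one_dimen}. The zero-mean hypothesis on $\phi$ is the second indispensable ingredient: it removes the stationary zero-frequency component, i.e.\ the permanent displacement, which carries no energy and is therefore invisible to any energy estimate.
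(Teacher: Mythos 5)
Your opening reduction is exactly the paper's: time-reverse via \eqref{rev-fut}, note $\tilde u_\phi(0,t)=u_\psi(0,t_0-t)$ vanishes for $t>t_0$, and reduce to exponential decay of $u_\psi(0,\tau)$ as $\tau\to+\infty$. But from there the paper simply applies Proposition \ref{1decay-pro} to $u_\psi$ as stated: it gives $|u_\psi(0,\tau)-c(\tau)|\le Ce^{-C'\tau}$ with $c(\tau)=-A_\infty^{-1}\int_0^\tau\psi(s)\,ds$, and the zero-mean hypothesis makes $c(\tau)=0$ once $\tau>\max\supp\psi$ (and for $\tau<\min\supp\psi$), so the lemma follows after adjusting constants. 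Your detour through the d'Alembert decomposition $F(\tau-x)+G(\tau+x)$ is therefore unnecessary, and it is also where the gap sits: you claim Proposition \ref{1decay-pro} "gives $|G'(\tau)|\le Ce^{-C'\tau}$", but the proposition bounds the boundary trace $u(0,t)-c(t)$ itself, not its time derivative, and the underlying energy functional $\mathscr{E}_{\phi,g_1,g_2}$ only controls $L^2$-in-$x$ quantities on $(0,\ell)$, never a pointwise value of $\partial_t u(0,t)$. As written, the step producing the $G'$ bound does not follow from anything you are entitled to cite. (It could be repaired by applying the proposition to the time-differentiated problem, whose Neumann data $\psi'$ automatically has zero mean and whose correction term $-A_\infty^{-1}\int_0^\tau\psi'=-A_\infty^{-1}\psi(\tau)$ is compactly supported; but you do not say this.)

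Even with that repair, your subsequent step $G(\tau)=-\int_\tau^\infty G'(s)\,ds$ presupposes $G(\infty)=0$, which you justify only by a sketched splitting of the integral identity into "near field" and "transmitted pulse"; this is essentially a re-derivation of Lemma \ref{lem:measurement-decay} and of the role of $c(t)$, done less carefully than the appendix does it. The physical picture in your last paragraph (geometric energy loss per round trip, zero-mean removing the energy-invisible permanent displacement) is the right intuition, and it is precisely what Proposition \ref{1decay-pro} packages. The lesson is that the proposition is already phrased at the level of the boundary value minus its zero-frequency part, so the proof of the lemma should consume it directly rather than attempting to extract derivative information it does not contain.
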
 

{The proof of this lemma relies on the energy estimates derived in Appendix  \ref{energy-estimate-sec}. The following proposition is a consequence of these estimates. For a proof, see p. \pageref{1decay-pro} in the appendix. 

\begin{repproposition}{1decay-pro}
  Let $u = u_\phi  \in C^\infty( [0,\infty) \times \R)  $ satisfy 
  \begin{align}
    \square_A u(x,t) &= 0, &&   (x,t) \in (0,\infty) \times \R,\\
    \partial_x u (0,t) &= \phi(t), && t\in\R,\\
    u(x,t) &= 0, && (x,t) \in (0,\infty) \times (-\infty,0).
  \end{align}
  where $\phi \in C^{\infty}_c (\R)$ is supported in $(0,\infty)$. Assume furthermore that $A(x)=A_\infty$ for $x > \ell$. Then, there are $C,C'>0$ such that 
 \[
    \big|  u(0,t) -c(t)  \big| \leq C e^{-C't}, \quad \forall t \in \R, 
    \]
    where  $c(t) := - \frac{1}{A_\infty} \int_0^t \phi(s) ds$. 
\end{repproposition}
}

\begin{proof}[Proof of Lemma \ref{K-lemma}]
Recall that the time-reversed wave is related to the standard one via $\tilde{u}_\phi (x,t) = u_{\psi} (x,t_0- t)$, where $ \psi (t) = \phi(t_0- t)$ and $t_0  > \max \text{supp} (\phi) $. 
Hence, it suffices to study the decay of $ u_{\psi}$ instead. 
According to Proposition \ref{1decay-pro}, 
\[
    \abs{\tilde{u}_\phi (0,t) - c(t_0-t)}=     \abs{u_\psi (0,t_0-t) - c(t_0-t)} \leq  C e^{-C' (t_0-t)}.
\]
where $c(t) :=  -\frac{1}{A_\infty} \int_0^t \psi (s) ds = -\frac{1}{A_\infty}\int_0^t \phi (t_0- s) ds $. The function $c(t_0-t)  $ vanishes for large $|t| $ by the assumption $\int_{\R} \phi (t) dt = 0$. Hence, 
\[
    \abs{\tilde{u}_\phi (0,t) } \leq   C e^{-C' (t_0-t)}.
\]
for such  $t$. 
The decay of  $C e^{-C' (t_0-t)}$ towards $-\infty$ is exponential, whereas 
 the wave $\tilde{u}_\phi (0,t) = u_\psi (0,t_0-t) $ vanishes identically for $t > t_0$ by the initial condition. 
After updating the constants $C',C>0$, we arrive at $|\tilde{u}_\phi (0,t) | \leq   C e^{-C' |t|}$.

\end{proof}

We now put together the previous lemmas to prove the following $L^2( \mathbb{P})$ limit.

\begin{proposition}\label{L2-con}
Let $A$ be admissible. Let $\phi, \psi \in C^\infty_c(\R)$ and assume that $\int_{\R}  \phi (t) dt = 0$. 
Then,
\[
\langle C_T(\phi) , \psi \rangle  \to \langle \tilde\Lambda_{ND} (\phi ), \psi \rangle 
\]
in $L^2( \mathbb{P})$ as $T \to \infty$. 
\end{proposition}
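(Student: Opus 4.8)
The plan is to obtain the $L^2(\mathbb{P})$-convergence as an immediate consequence of the two moment computations already established, namely Lemma~\ref{weaklemma} (convergence of the first moment) and Lemma~\ref{E-lemma} (convergence of the second moment). Write $Z_T := \langle C_T(\phi), \psi \rangle$, which is a real-valued random variable, and set $\mu := \langle \tilde\Lambda_{ND}(\phi), \psi \rangle$, a deterministic constant. Since $\mu$ is non-random, the target assertion $Z_T \to \mu$ in $L^2(\mathbb{P})$ is precisely the statement $\E\,|Z_T - \mu|^2 \to 0$ as $T \to \infty$.

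The key step is the elementary expansion
\[
\E\,|Z_T - \mu|^2 = \E(Z_T^2) - 2\mu\,\E(Z_T) + \mu^2,
\]
which reduces everything to the behaviour of the first two moments of $Z_T$. I would then invoke Lemma~\ref{weaklemma} to get $\E(Z_T) \to \mu$, and Lemma~\ref{E-lemma}---applicable here precisely because of the standing hypothesis $\int_\R \phi(t)\,dt = 0$---to get $\E(Z_T^2) \to \langle \tilde\Lambda_{ND}(\phi), \psi \rangle^2 = \mu^2$. Substituting these two limits into the expansion yields
\[
\lim_{T \to \infty} \E\,|Z_T - \mu|^2 = \mu^2 - 2\mu\cdot\mu + \mu^2 = 0,
\]
which is exactly the claimed convergence in $L^2(\mathbb{P})$.

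I do not expect any genuine obstacle at this stage: the analytic content—the Fubini argument and domination in Lemma~\ref{weaklemma}, and the Isserlis expansion together with the exponential decay estimate \eqref{19q} (via Lemma~\ref{K-lemma} and Proposition~\ref{1decay-pro}) in Lemma~\ref{E-lemma}—has already been absorbed into those two lemmas. The only points requiring a moment's care are bookkeeping: verifying that $Z_T$ is genuinely real-valued, so that $|Z_T-\mu|^2 = Z_T^2 - 2\mu Z_T + \mu^2$ with no complex conjugation, and noting the harmless identification $\tilde\Lambda = \tilde\Lambda_{ND}$ so that the limits of the first and second moments are consistently $\mu$ and $\mu^2$. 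Both are immediate, and the proof is therefore a one-line variance-type argument once the two moment lemmas are in hand.
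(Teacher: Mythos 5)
Your argument is correct and coincides with the paper's own proof: both expand $\E\,|Z_T-\mu|^2 = \E(Z_T^2) - 2\mu\,\E(Z_T) + \mu^2$ and then apply Lemma~\ref{weaklemma} for the first moment and Lemma~\ref{E-lemma} for the second, using the hypothesis $\int_\R \phi\,dt = 0$ exactly where you say it is needed. No gaps; this is the same one-line variance computation the authors give.
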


\begin{proof}
A straightforward computation gives 
\[
\E \left( \left( \langle C_T(\phi) , \psi \rangle  - \langle \tilde\Lambda_{ND} (\phi ), \psi \rangle  \right)^2 \right)= \E \left( \langle C_T(\phi) , \psi \rangle^2   \right) - 2   \langle \tilde\Lambda_{ND} (\phi ), \psi \rangle \E  \langle C_T(\phi) , \psi \rangle   +  \langle \tilde\Lambda_{ND} (\phi ), \psi \rangle^2. 
\]
By Lemma \ref{weaklemma}, $ \E  \langle C_T(\phi) , \psi \rangle \to  \langle \tilde\Lambda_{ND} (\phi ), \psi \rangle$, as $T \to \infty$. 
Moreover,  $ \E (  \langle C_T(\phi) , \psi \rangle^2 ) \to  \langle \tilde\Lambda_{ND} (\phi ), \psi \rangle^2$ by Lemma \ref{E-lemma}. 
Thus, 
\[
 \E \left( \left( \langle C_T(\phi) , \psi \rangle  - \langle \tilde\Lambda_{ND} (\phi ), \psi \rangle  \right)^2 \right)  \to  \langle \tilde\Lambda_{ND} (\phi ), \psi \rangle^2 - 2   \langle \tilde\Lambda_{ND} (\phi ), \psi \rangle  \langle \tilde\Lambda_{ND} (\phi ), \psi \rangle+ \langle \tilde\Lambda_{ND} (\phi ), \psi \rangle^2 = 0.
\]
as $T \to \infty$. That is; 
$
\langle C_T(\phi) , \psi \rangle  \to \langle \tilde\Lambda_{ND} (\phi ), \psi \rangle 
$
in $L^2( \mathbb{P})$. 
\end{proof}

We can finally derive the main result:
\begin{proof}[Proof of Theorem \ref{themain}]
Let $A_\nu$, $\nu=1,2$ be admissible and denote by 
 $C_{T\nu} $, $\Lambda_{\nu} $ and $\tilde\Lambda_{\nu} $ for $\nu=1,2$ the associated correlation operators, Neumann-to-Dirichlet maps and time-reversed Neumann-to-Dirichlet maps, respectively.
If $A_1 = A_2$, then $\Lambda_{1} (\chi \omega) = \Lambda_{2} (\chi \omega) $ trivially for every $ \omega \in \mathcal{S}'(\R)$. 
That is; the condition $\Lambda_{1}(\chi \omega) \neq \Lambda_{2}(\chi \omega)$ for some $\omega \in \mathcal{S}'(\R)$ implies $A_1 \neq A_2$.

Assume then $A_1 \neq A_2$. We need to show that there is a measurable $U \subset \mathcal{S}'(\R)$  with $\mathbb{P}(U) = 1$ such that $\Lambda_{1}(\chi \omega) \neq \Lambda_{2}(\chi \omega)$ for every $\omega \in U$.  
Consider test functions $\phi_k, \psi \in C^\infty_c (\R)$, $k =1,2,3,\dots$ such that $\int \phi_k (x) dx = 0$. We shall fix $\phi_k$ more precisely later. 
 By Hölder's inequality, 
 \[
0\leq   \left\|  \langle C_{T\nu}(\phi_k) , \psi \rangle  - \langle \tilde\Lambda_{\nu } (\phi_k ), \psi \rangle  \right\|_{L^1(\mathbb{P})} \leq \left\|  \langle C_{T\nu}(\phi_k) , \psi \rangle  - \langle \tilde\Lambda_{ \nu} (\phi_k ), \psi \rangle  \right\|_{L^2(\mathbb{P})}^2 .
 \]
The dominant on the right hand side converges to $0$ as $T \to \infty$ by Proposition \ref{L2-con}. 
Consequently, 
\[
\langle C_{T\nu} (\phi_k) , \psi \rangle  \to  \langle \tilde\Lambda_{ \nu} (\phi_k ), \psi \rangle 
\]
in $L^1 ( \mathbb{P})$ as $T \to \infty$. 
In particular, 
\[
\langle (C_{T1} - C_{T2}) (\phi_k) , \psi \rangle  \to  \langle \tilde\Lambda_{ 1} (\phi_k )-  \tilde\Lambda_{ 2} (\phi_k ), \psi \rangle 
\]
in $L^1 ( \mathbb{P})$ as $T \to \infty$. 
Recall that any sequence converging in $L^1 $ admits a sub-sequence that converges pointwise almost surely. 
Hence, there are increasing $T_{1j} $, $j=1,2,3,\dots$, such that  $\langle C_{T_{1j}1}(\phi_1)-C_{T_{1j}2}(\phi_1)  , \psi \rangle$ converges to $ \langle \tilde\Lambda_{1} (\phi_1 )-\tilde\Lambda_{2} (\phi_1 ), \psi \rangle$ pointwise almost surely. 
That is; there is a measurable set $U_1 \subset \mathcal{S}'(\R)$ such that $\mathbb{P} (U_1) = 1$ and 
\begin{equation}\label{eqp:991221}
\lim_{j \to \infty} \langle C_{T_{1j}1}(\phi_1)-C_{T_{1j}2}(\phi_1)  , \psi \rangle =  \langle \tilde\Lambda_{1} (\phi_1 )-\tilde\Lambda_{2} (\phi_1 ), \psi \rangle
\end{equation}
at every $\omega \in U_1$. We can use the same argument to find an increasing subsequence $T_{2j}$ of $T_{1j}$ and measurable $U_2  \subset \mathcal{S}'(\R)$ such that $\mathbb{P} (U_2) = 1$ and 
\begin{equation}\label{eqp:991222}
\lim_{j \to \infty} \langle C_{T_{2j}1}(\phi_2)-C_{T_{2j}2}(\phi_2)  , \psi \rangle =  \langle \tilde\Lambda_{1} (\phi_2 )-\tilde\Lambda_{2} (\phi_2 ), \psi \rangle
\end{equation} 
at every $\omega \in U_2$. 
Moreover, we may assume $U_2 \subset U_1$ by 
taking intersection with $U_1$. 
Repeating this construction for every $k=1,2,3,\dots$ we obtain 
subsequences $(T_{kj})$, $k=1,2,3\dots$ and nested 
$U_1  \supset U_{2} \supset  \cdots $ such that $\mathbb{P} (U_k) = 1$ and 
\begin{equation}\label{eqp:991223}
\lim_{j \to \infty} \langle C_{T_{kj}1}(\phi_k)-C_{T_{kj}2}(\phi_k)  , \psi \rangle =  \langle \tilde\Lambda_{1} (\phi_k )-\tilde\Lambda_{2} (\phi_k ), \psi \rangle
\end{equation} 
at every $\omega \in U_k$.  
Define $T_j := T_{jj}$ and $U := \cap_{k} U_k$.
Then, $\mathbb{P} (U) =\lim_{k\to \infty} \mathbb{P} (U_k) = 1$ and 
\begin{equation}\label{eqp:99122}
\lim_{j \to \infty} \langle C_{T_{j}1}(\phi_k)-C_{T_{j}2}(\phi_k)  , \psi \rangle =  \langle \tilde\Lambda_{1} (\phi_k )-\tilde\Lambda_{2} (\phi_k ), \psi \rangle
\end{equation} 
for every $k=1,2,3,\dots$ and $\omega \in U$.

Fix $\varphi \in C^\infty_c(\R)$ and  
define the sequence $\phi_k \in C^\infty_c(\R)$, $k=1,2,3,\dots$ 
by $\phi_k = \varphi - \varphi (x + k)$. 
Analogously to $\Lambda_\nu $ being continuous on $\mathcal{D}_{t_0}' (\R)$, the operator $ \tilde\Lambda_{\nu}$
is continuous on the time-reversed domain $\tilde{\mathcal{D}}_{t_0}' (\R) :=  \{ f \in \mathcal{D}'(\R) :   \text{supp} (f) \subset (-\infty, t_0)  \} $ and we have for every $\psi \in C_c^\infty(\R)$ that 
\[
 \lim_{k \to \infty }   \langle \tilde\Lambda_{1} (\phi_k )-\tilde\Lambda_{2} (\phi_k ), \psi \rangle =   \langle \tilde\Lambda_{1} (\varphi )-\tilde\Lambda_{2} (\varphi ), \psi \rangle
\]
at every  $\omega \in U$. Moreover, the left hand side is 
\[
\lim_{k \to \infty }  \lim_{j \to \infty } \langle C_{T_{j}1}(\phi_k)-C_{T_{j}2}(\phi_k)  , \psi \rangle  
\]
by \eqref{eqp:99122}.  
Thus, 
\[
\lim_{k \to \infty }  \lim_{j \to \infty } \langle C_{T_{j}1}(\phi_k)-C_{T_{j}2}(\phi_k)  , \psi \rangle  =   \langle \tilde\Lambda_{1} (\varphi )-\tilde\Lambda_{2} (\varphi ), \psi \rangle
\]
The test functions $\varphi , \psi \in C^\infty_c( \R)$ above can be chosen freely. 
Since  $A_1 \neq A_2$, it follows from Theorem \ref{sondhi_theorem} and the continuity of $\tilde\Lambda_{1},\tilde\Lambda_{2}$ that $\tilde\Lambda_{1}( \varphi ) \neq \tilde\Lambda_{2} (\varphi  )$ for some $\varphi \in C^\infty_c (\R)$. 
Hence, the right hand side of the limit above is non-zero for some $\psi \in C_c^\infty(\R)$. 
Consequently,  $C_{T_j1}(\phi_k)-C_{T_j2}(\phi_k) \neq 0 $ for some $j$ and $k$ which further implies that $\Lambda_{1} (\chi \omega)  \neq \Lambda_{2} (\chi \omega) $ by the definition of the correlation function. 
In conclusion, there is a measurable $U \subset \mathcal{S}'(\R)$ with $\mathbb{P} (U) = 1$ such that $\Lambda_{1} (\chi \omega)  \neq \Lambda_{2} (\chi \omega) $ for every $\omega \in U$. 
This finishes the proof. 
\end{proof}

\appendix
\appendixpage

\section{Energy estimates}\label{energy-estimate-sec}

In this section, we prove that $\ND ( \phi )$ decays fast enough for $\phi \in C^\infty_c(\R)$ supported in $(0,\infty)$. 
By \eqref{rev-fut}, $\tilde\Lambda_{ND} ( \phi)$ has the same decay for arbitrary $\phi \in C^\infty_c(\R)$. 
We follow the proof idea from \cite{arnold22_expon_time_decay_one_dimen}, which in our case goes as follows.

Let $A$ be admissible. Assume that $A(x)=A_\infty$ outside of $(0,\ell)$ in (\ref{def-wave-op}). Let $\phi \in C^\infty_c(\R)$ have support in $x >0$.  We will give a precise definition of two strictly positive mapping $g_1,g_2 \colon [0,\infty) \to (0,\infty)$ later. For $\tau\in\R$, define the energy located in that interval at time $t=\tau$ by
\begin{equation}
  \label{eq:energy-def}
  \mathscr{E}_{\phi,g_1,g_2}(\tau) := 
  \frac{1}{2} \int_0^\ell \left[ g_1(x)\big(\partial_t u(x,\tau) + \partial_x u(x,\tau)\big)^2 + g_2(x)\big(\partial_t u(x,\tau) - \partial_x u(x,\tau)\big)^2 \right] A(x) \dx
\end{equation}
where $u \in C^\infty ( [0,\infty) \times \R)$ satisfies
\begin{align}
  \square_A u &= 0, && x>0,\, t\in\R,\label{eq:energy-wave-u}\\
 \partial_x u(0,t) &= \phi(t), && t\in\R,\\
  u(x,t) &= 0, && x>0,\, t<0,
\end{align}
and $\square_A$ is as defined in (\ref{def-wave-op}). 
The Neumann-to-Dirichlet map at $\phi$ is $\ND(\phi) = u|_{x=0}$. 
Because everything is smooth and bounded, we can differentiate under the integral sign. We get
\begin{align*}
  &\mathscr{E}_{\phi,g_1,g_2}'(\tau) = \int_0^\ell \left[
    g_1 (\partial_t u + \partial_x u) (\partial_t^2 u + \partial_t\partial_x u) + g_2 (\partial_t u - \partial_x u) (\partial_t^2 u - \partial_t\partial_x u)
    \right] A \dx \\
    \end{align*}
 Applying $\square_A u = 0$ yields 
    \begin{align*}
      \mathscr{E}_{\phi,g_1,g_2}'(\tau)
      &= \int_0^\ell \Big[ g_1 (\partial_t u + \partial_x u) \Big(\partial_x^2 u + \frac{\partial_x A}{A} \partial_x u \\
      &\qquad\quad + \partial_t\partial_x u\Big)  + g_2 (\partial_t u - \partial_x u) \Big(\partial_x^2 u + \frac{\partial_x A}{A} \partial_x u - \partial_t\partial_x u \Big) \Big] A \dx.  \\
      &= \int_0^\ell \Big[ g_1 (\partial_t u + \partial_x u)(\partial_x^2 u + \partial_x \partial_t u) + g_1 (\partial_t u + \partial_x u) \frac{\partial_x A}{A} \partial_x u \\
      &\qquad\quad + g_2 (\partial_t u - \partial_x u) (\partial_x^2 u - \partial_x \partial_t u) + g_2 (\partial_t u - \partial_x u) \frac{\partial_x A}{A} \partial_x u \Big] A \dx \\
      &= \frac{1}{2} \int_0^\ell \Big[ g_1 \partial_x (\partial_t u + \partial_x u)^2 - g_2 \partial_x (\partial_t u - \partial_x u)^2 + \frac{\partial_x A}{A} \big(g_1 (\partial_t u + \partial_x u)^2 - g_2 (\partial_t u - \partial_x u)^2\big) \\
      &\qquad\qquad - \frac{\partial_x A}{A} (g_1 - g_2) \big((\partial_t u)^2 - (\partial_x u)^2 \big) \Big] A \dx \\
      &= \frac{1}{2} \int_0^\ell \Big[ g_1 \partial_x \big( A (\partial_t u + \partial_x u)^2 \big) - g_2 \partial_x \big( A (\partial_t u - \partial_x u)^2 \big) - \partial_x A (g_1 - g_2) \big( (\partial_t u)^2 - (\partial_x u)^2 \big) \Big] \dx. \\
    \end{align*}
    Further, integrating by parts leads to
    \begin{multline}
      \label{eq:energy-derivative-Calculation}
      \mathscr{E}_{\phi,g_1,g_2}'(\tau) = \frac{1}{2} \intlim{0}{\ell} \Big[ g_1 A (\partial_t u + \partial_x u)^2 - g_2 A (\partial_t u - \partial_x u)^2 \Big] \\
      + \frac{1}{2} \int_0^\ell \Big[ -\partial_x g_1 A (\partial_t u + \partial_x u)^2 + \partial_x g_2 A (\partial_t u - \partial_x u)^2 \\
      - \partial_x A (g_1 - g_2) \big((\partial_t u)^2 - (\partial_x u)^2\big) \Big] \dx.
  \end{multline}
Recall that $A(x)$ is constant for all $x>\ell$ so there are no incoming waves at $x=\ell$. This means that $(\partial_t + \partial_x) u (\ell, \tau)=0$. We also take the requirement
\begin{equation}
  \label{eq:energy-coefficient-zero-values}
  g_1(0) = g_2(0) = 1.
\end{equation}
Then the {boundary terms} satisfy
\begin{align*}
  &\intlim{0}{\ell} \Big[ g_1 A (\partial_t u + \partial_x u)^2 - g_2 A (\partial_t u - \partial_x u)^2 \Big] \\
  &= g_1(\ell) A(\ell) (\partial_t u + \partial_x u)^2(\ell,\tau) - g_2(\ell) A(\ell) (\partial_t u - \partial_x u)^2(\ell,\tau) \\
  &\quad - g_1(0) A(0) (\partial_t u + \partial_x u)^2(0,\tau) + g_2(0) A(0) (\partial_t u - \partial_x u)^2(0,\tau) \\
  &= 0 - g_2(\ell) A(\ell) (\partial_t u - \partial_x u)^2(\ell,\tau)-   4 \partial_t u(0,\tau) \partial_x u(0,\tau) \\
  &= - g_2(\ell) A(\ell) (\partial_t u - \partial_x u)^2(\ell,\tau) - 4 \partial_t  \ND ( \phi) |_{t= \tau} \phi (\tau) .
\end{align*}
The first term on the right is always non-positive. Moreover, $\phi (\tau) = 0$ for $\tau > \max \supp \phi $ which implies that the second term vanishes for a such $\tau$. Thus, the {expression for boundary terms} above is, { in total,} non-positive for $\tau > \max \supp \phi $. 
We can also estimate the integrand of the remaining integral in (\ref{eq:energy-derivative-Calculation}) as follows
\[
  - \partial_x A (g_1 - g_2) \big((\partial_t u)^2 - (\partial_x u)^2\big) \leq \tfrac{1}{2} \abs{\partial_x A} \abs{g_1 - g_2} \big((\partial_t u + \partial_x u)^2 + (\partial_t u - \partial_x u)^2\big). 
\]
These then imply 
\begin{align*}
  &\mathscr{E}_{\phi, g_1,g_2}'(\tau) 
   \leq \frac{1}{2} \int_0^\ell \parentheses*{-\partial_x g_1 + \frac{1}{2} \abs*{\frac{\partial_x A}{A}} \abs{g_1 - g_2}} (\partial_t u + \partial_x u)^2 A \dx \\
  &\quad \frac{1}{2} \int_0^\ell \parentheses*{\partial_x g_2 + \frac{1}{2} \abs*{\frac{\partial_x A}{A}} \abs{g_1 - g_2}} (\partial_t u - \partial_x u)^2 A \dx.
\end{align*}
If $g_1$ and $g_2$ additionally satisfy
\begin{equation}
  \label{eq:energy-coefficient-inequalities}
  \begin{cases}
    \phantom{-} \partial_x g_1 - \tfrac{M}{2} \abs{g_1 - g_2} \geq \lambda g_1 \\
    - \partial_x g_2 - \tfrac{M}{2} \abs{g_1 - g_2} \geq \lambda g_2 ,
  \end{cases}
\end{equation}
for  $M = \max \abs{\partial_x A / A}$,  we arrive at
\begin{equation}
  \label{eq:energy-derivative-estimate}
  \mathscr{E}_{\phi ,g_1, g_2}'(\tau) \leq - \lambda  \mathscr{E}_{\phi,g_1,g_2}(\tau),
\end{equation}
whenever $\tau > \max \supp \phi$. This will give an energy estimate thanks to Gr\"onwall's inequality. 
It is indeed possible to find such $g_1,g_2$. Following \cite{arnold22_expon_time_decay_one_dimen} in our context, we summarize:

\begin{lemma}\label{gronw-lemma}
  Let $M = \max \abs{A^{-1} \partial_x A}$ and $\lambda = \tfrac{M}{2} e^{-M \ell} \sqrt{1 - 2 M \ell e^{-2 M \ell}}$ where $\ell$ is the length of the domain of interest defined along with $\mathscr{E}_{\phi,g_1,g_2}$ in (\ref{eq:energy-def}). Set
  \begin{align}
    g_1(x) = \frac{e^{\frac{M}{2} x}}{\sqrt{\lambda^2 + \parentheses*{M/2}^2}}
    \Bigg(& \sqrt{\lambda^2 + \parentheses*{M/2}^2} \cosh \parentheses*{\sqrt{\lambda^2 + \parentheses*{M/2}^2} x} \\
          & + \parentheses*{\lambda - M/2} \sinh \parentheses*{\sqrt{\lambda^2 + \parentheses*{M/2}^2} x} \Bigg)
  \end{align}
  \begin{align}
    g_2(x) = \frac{e^{\frac{M}{2} x}}{\sqrt{\lambda^2 + \parentheses*{M/2}^2}}
    \Bigg(& \sqrt{\lambda^2 + \parentheses*{M/2}^2} \cosh \parentheses*{\sqrt{\lambda^2 + \parentheses*{M/2}^2} x} \\
          & - \parentheses*{\lambda + M/2} \sinh \parentheses*{\sqrt{\lambda^2 + \parentheses*{M/2}^2} x} \Bigg)
  \end{align}
  Then $g_1$ and $g_2$ are strictly 
  positive on $[0,\ell]$, and they satisfy (\ref{eq:energy-coefficient-zero-values}) and (\ref{eq:energy-coefficient-inequalities}). 
  As a consequence, (\ref{eq:energy-derivative-estimate}) holds for $\tau >  \tau_0:= \max \supp  \phi $ and hence
    \begin{equation}
    \label{eq:energy-local-decay}
    \mathscr{E}_{\phi,g_1,g_2}(\tau) \leq \mathscr{E}_{\phi,g_1,g_2}(\tau_0) e^{-\lambda (\tau-\tau_0)}
  \end{equation}
  by Gr\"onwall's inequality.

\end{lemma}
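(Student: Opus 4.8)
The plan is to recognize the stated $g_1,g_2$ as the solution of the linear first-order system one obtains by turning the inequalities \eqref{eq:energy-coefficient-inequalities} into equalities, and then to verify the three asserted properties essentially by inspection. Writing $\mu := \sqrt{\lambda^2 + (M/2)^2}$, the given formulas read
\[
g_1(x) = e^{Mx/2}\Big(\cosh(\mu x) + \tfrac{\lambda - M/2}{\mu}\sinh(\mu x)\Big),\qquad
g_2(x) = e^{Mx/2}\Big(\cosh(\mu x) - \tfrac{\lambda + M/2}{\mu}\sinh(\mu x)\Big),
\]
so evaluating at $x=0$ gives $g_1(0)=g_2(0)=1$, which is \eqref{eq:energy-coefficient-zero-values}. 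Differentiating these closed forms (using $\cosh' = \mu\sinh$, $\sinh' = \mu\cosh$ and $\mu^2 = \lambda^2 + (M/2)^2$), I would confirm the routine but essential identities
\[
\partial_x g_1 = \big(\lambda + \tfrac{M}{2}\big) g_1 - \tfrac{M}{2} g_2, \qquad
\partial_x g_2 = -\tfrac{M}{2} g_1 - \big(\lambda - \tfrac{M}{2}\big) g_2 .
\]

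The next step is to resolve the absolute value. A short computation gives $g_1 - g_2 = \tfrac{2\lambda}{\mu} e^{Mx/2}\sinh(\mu x) \ge 0$ for $x\ge 0$, so $\abs{g_1 - g_2} = g_1 - g_2$ on $[0,\ell]$. Inserting this into the system above rewrites it as $\partial_x g_1 - \tfrac{M}{2}\abs{g_1 - g_2} = \lambda g_1$ and $-\partial_x g_2 - \tfrac{M}{2}\abs{g_1 - g_2} = \lambda g_2$, which is exactly \eqref{eq:energy-coefficient-inequalities}, in fact holding with equality.

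The delicate point, and the main obstacle, is strict positivity on $[0,\ell]$. For $g_1$ the bound $\abs{\lambda - M/2} < \mu$ (which squares down to $-\lambda M < 0$) together with $\sinh(\mu x)\ge 0$ gives $g_1 \ge e^{Mx/2}\big(\cosh(\mu x) - \sinh(\mu x)\big) = e^{(M/2-\mu)x} > 0$ on all of $[0,\infty)$. For $g_2$ the coefficient satisfies $\tfrac{\lambda + M/2}{\mu} > 1$, so $g_2$ genuinely vanishes at the unique $x^\ast$ with $\tanh(\mu x^\ast) = \mu/(\lambda + M/2)$; since $g_2(0)=1$ and $\tanh$ is increasing, positivity on $[0,\ell]$ is equivalent to the single inequality
\[
\tanh(\mu\ell) < \frac{\mu}{\lambda + \tfrac{M}{2}} .
\]
This is precisely where the explicit $\lambda = \tfrac{M}{2} e^{-M\ell}\sqrt{1 - 2M\ell\, e^{-2M\ell}}$ enters: the radicand is well defined and positive because $2M\ell\, e^{-2M\ell} \le e^{-1} < 1$ for all $M\ell \ge 0$ (so $\lambda>0$), and substituting $\lambda$ reduces the display to an elementary inequality in the single parameter $M\ell$. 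Following the construction of \cite{arnold22_expon_time_decay_one_dimen}, I expect this verification to be the only genuinely computational part; the regime $M\ell \to \infty$, where both sides tend to $1$, is the tight case and explains why the square-root factor in $\lambda$ cannot be dropped.

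Finally, the conclusion is immediate. Having verified \eqref{eq:energy-coefficient-inequalities}, the estimate for $\mathscr{E}_{\phi,g_1,g_2}'$ established just before the lemma (which uses only the coefficient inequalities and $M = \max\abs{\partial_x A/A}$) yields $\mathscr{E}_{\phi,g_1,g_2}'(\tau) \le -\lambda\,\mathscr{E}_{\phi,g_1,g_2}(\tau)$ for $\tau > \tau_0 = \max\supp\phi$, i.e.\ \eqref{eq:energy-derivative-estimate}. Grönwall's inequality, equivalently the observation that $\tfrac{d}{d\tau}\big(e^{\lambda\tau}\mathscr{E}_{\phi,g_1,g_2}(\tau)\big) \le 0$ on $(\tau_0,\infty)$, then gives \eqref{eq:energy-local-decay}.
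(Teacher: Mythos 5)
Your computations are correct as far as they go, and your route is the same one the paper takes implicitly: the paper in fact gives \emph{no} proof of Lemma \ref{gronw-lemma} at all, stating it directly after the sentence ``Following \cite{arnold22_expon_time_decay_one_dimen} in our context, we summarize,'' so the burden of verification is deferred entirely to that reference. Your verification of \eqref{eq:energy-coefficient-zero-values}, of the linear system $\partial_x g_1 = (\lambda + \tfrac{M}{2})g_1 - \tfrac{M}{2}g_2$, $\partial_x g_2 = -\tfrac{M}{2}g_1 - (\lambda - \tfrac{M}{2})g_2$, of the identity $g_1 - g_2 = \tfrac{2\lambda}{\mu}e^{Mx/2}\sinh(\mu x) \ge 0$, and of the strict positivity of $g_1$ via $|\lambda - M/2| < \mu$ are all correct, and together they show that \eqref{eq:energy-coefficient-inequalities} holds \emph{with equality} once positivity is known. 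The passage from \eqref{eq:energy-coefficient-inequalities} to \eqref{eq:energy-derivative-estimate} and then to \eqref{eq:energy-local-decay} is also handled correctly.

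The genuine gap is the positivity of $g_2$ on $[0,\ell]$. You correctly identify that $\tfrac{\lambda + M/2}{\mu} > 1$, so $g_2$ does vanish at a finite $x^\ast$, and that the lemma stands or falls on the single inequality $\tanh(\mu\ell) < \mu/(\lambda + \tfrac{M}{2})$ — but you then write that you ``expect'' this to reduce to an elementary inequality in $M\ell$ and stop. That inequality is the only place in the entire lemma where the specific formula $\lambda = \tfrac{M}{2}e^{-M\ell}\sqrt{1 - 2M\ell\,e^{-2M\ell}}$ is used (everything else you prove holds for \emph{any} $\lambda > 0$), so leaving it unverified means the one substantive claim of the lemma is not actually established. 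It is not a trivial check: in the regime $M\ell \to \infty$ one has $\mu \to M/2$, $\tanh(\mu\ell) \approx 1 - 2e^{-M\ell}$ and $\mu/(\lambda + M/2) \approx 1 - \tfrac{2\lambda}{M} \approx 1 - e^{-M\ell}$, so the inequality survives only by the factor $2$ in the exponentially small corrections — your remark that this is the tight case is right, but that is precisely why the verification cannot be waved through. To close the gap you would either carry out the substitution and prove the resulting scalar inequality for all $M\ell \ge 0$, or cite the corresponding computation in \cite{arnold22_expon_time_decay_one_dimen} explicitly as the paper does. (A minor additional point: for $M = 0$ the formulas for $\lambda$, $\mu$, $g_1$, $g_2$ degenerate, so the lemma implicitly assumes $M > 0$, i.e.\ $A \not\equiv 1$; this is harmless but worth a sentence.)
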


\bigskip We will need a result tying together the energy decay inside, and the decay of our measurement.
\begin{lemma}
  \label{lem:measurement-decay}
  Let $u$ satisfy 
  \begin{align}
    \square_A u &= 0, && x>0,\, t\in\R,\\
    \partial_x u (0,t) &= \phi(t), && t\in\R,\\
    u(x,t) &= 0, && x>0,\, t<0.
  \end{align}
  where $\phi \in C^{\infty}_c (\R)$ is supported in $(0,\infty)$. Assume furthermore that $A(x)=A_\infty$ for $x > \ell$. Then
  \begin{equation}
    \big|  u(0,t) -c(t)  \big|^2 \leq \int_0^{\ell} \left(   \partial_x u (x,t) + \frac{A(x)}{A_\infty} \partial_t u(x,t) \right)^2  \dx.  
  \end{equation}
  where
  \begin{equation}
    c(t) = - \frac{1}{A_\infty} \int_0^t \phi (t) \dt.
  \end{equation}
\end{lemma}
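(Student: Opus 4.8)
The plan is to reduce the inequality to an exact \emph{flux identity} that rewrites $u(0,t)-c(t)$, at each fixed $t$, as a spatial integral over $[0,\ell]$ of precisely the quantity being squared on the right-hand side, and then to close the argument with the Cauchy--Schwarz inequality. Abbreviate $W(x,t):=\partial_x u(x,t)+\frac{A(x)}{A_\infty}\partial_t u(x,t)$, so that the claim reads $|u(0,t)-c(t)|^2\le\int_0^\ell W(x,t)^2\,\dx$. The core of the proof is the identity
\[
u(0,t)-c(t)=-\int_0^\ell W(x,t)\,\dx ,
\]
whose integrand vanishes at the endpoint $x=\ell$; granting it, the stated estimate follows from the Cauchy--Schwarz inequality.

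To obtain the identity, fix $t$ and introduce the auxiliary function
\[
g(x):=u(x,t)-u(\ell,t)-\frac{1}{A_\infty}\int_x^\ell A(y)\,\partial_t u(y,t)\,\dy .
\]
Leibniz's rule gives immediately $g'(x)=\partial_x u(x,t)+\frac{A(x)}{A_\infty}\partial_t u(x,t)=W(x,t)$, while plainly $g(\ell)=0$, so that $g(0)=-\int_0^\ell W\,\dx$. Since $g(0)=u(0,t)-\big(u(\ell,t)+\frac{1}{A_\infty}\int_0^\ell A\,\partial_t u\,\dy\big)$, it remains to prove the sub-identity $u(\ell,t)+\frac{1}{A_\infty}\int_0^\ell A\,\partial_t u\,\dy=c(t)$.

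Call the left-hand side $m(t)$ and differentiate in $t$. Rewriting $\square_A u=0$ in divergence form as $A\,\partial_t^2 u=\partial_x(A\,\partial_x u)$ and integrating over $x\in[0,\ell]$ yields $\int_0^\ell A\,\partial_t^2 u\,\dy=A_\infty\,\partial_x u(\ell,t)-\phi(t)$, where we used $A(\ell)=A_\infty$, $A(0)=1$, and the Neumann condition $\partial_x u(0,t)=\phi(t)$. Hence $m'(t)=\partial_t u(\ell,t)+\partial_x u(\ell,t)-\frac{1}{A_\infty}\phi(t)$. Because $A\equiv A_\infty$ on $(\ell,\infty)$ there are no incoming waves at $x=\ell$, that is $(\partial_t+\partial_x)u(\ell,t)=0$; this cancels the first two terms and leaves $m'(t)=-\frac{1}{A_\infty}\phi(t)=c'(t)$. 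Finally $m(t)=c(t)=0$ for $t<0$ by the zero initial condition together with $\supp\phi\subset(0,\infty)$, so $m\equiv c$. This establishes the sub-identity, hence the flux identity, and the lemma follows.

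The main obstacle is the flux identity itself: recognizing the divergence form $A\,\partial_t^2 u=\partial_x(A\,\partial_x u)$, which converts the bulk integral into boundary data, and invoking the outgoing condition $(\partial_t+\partial_x)u(\ell,t)=0$ at $x=\ell$ (already noted in Section~\ref{energy-estimate-sec}) to discard the boundary contribution there. The differentiation of $g$ and the concluding estimate are routine.
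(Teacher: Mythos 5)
Correct, and essentially the paper's own proof: the paper derives the same flux identity $u(0,t)-c(t)=-\int_0^\ell\bigl(\partial_x u+\tfrac{A}{A_\infty}\partial_t u\bigr)\dx$ by integrating the divergence-form equation $A\,\partial_t^2 u=\partial_x(A\,\partial_x u)$ over the rectangle $(0,\ell)\times(0,t)$ and invoking the outgoing condition $(\partial_t+\partial_x)u(\ell,\cdot)=0$, which is exactly your $m'(t)=c'(t)$ computation run in integrated form, followed by the same Cauchy--Schwarz step. The only caveat, shared equally by the paper's proof, is that Cauchy--Schwarz actually gives $|u(0,t)-c(t)|^2\le \ell\int_0^\ell\bigl(\partial_x u+\tfrac{A}{A_\infty}\partial_t u\bigr)^2\dx$, with an extra factor $\ell$ that is harmless because it is absorbed into the constant $C$ in Proposition \ref{1decay-pro}.
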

\begin{proof}
  Integrate the first equation above in the rectangle $(0,\infty) \times (0,t)$ and use $A(0) = 1, \ A(\ell) = A_\infty$ to get
  \begin{align}
    0 &= \int_0 ^{\ell} \int_0^t \big[ A(x) \partial_t^2 u(x,t) - \partial_x (A(x)  \partial_x u(x,t) ) \big] \ds \dx \\
      &= \int_0^{\ell} \big[ A(x) \partial_t u(x,t) - A(x) \partial_t u(x,0) \big] \dx - \int_0^t\big[  A_\infty \partial_x u(\ell,s) -  \partial_x u(0,s) \big] \ds.
  \end{align}
Using the fact that $ \partial_t u(x,0) = 0$ for all $x>0$, we get  
\[
 \int_0^{\ell} A(x) \partial_t u(x,t)  \dx - \int_0^t\big[  A_\infty \partial_x u(\ell,s) -  \partial_x u(0,s) \big] \ds = 0.
\]
Moreover, $(\partial_t u+ \partial_x u )(\ell,s) = 0$ holds since $A(x)=A_\infty$ on $(\ell,\infty)$ implies that any wave on that set must be moving to the right. 
Indeed, the support of $u$ would otherwise propagate along the rays $x+t =$constant to $(0,\infty) \times (-\infty,0)$ where $u=0$ by definition. 
Hence, we can replace $\partial_x u (\ell,s)$ in the second integral by $-\partial_t u (\ell,s)$: 
\[
 \int_0^{\ell}  A(x) \partial_t u(x,t)   \dx + \int_0^t\big[ A_\infty \partial_t u(\ell,s)  +  \partial_x u(0,s) \big] \ds = 0.
\]
After integrating with respect to $s$ and recalling that $u(x,0) = 0$, 
we have
  \begin{equation}
     \int_0^{\ell} A(x) \partial_t u (x,t) \dx +  A_\infty u(\ell,t) + \int_0^t  \partial_x u(0,s) \ds = 0.
  \end{equation}
  Recall that $ \partial_x u(0,s) = \phi(s)$. 
Hence, 
  \begin{align}
      u(0,t) +    \frac{1}{A_\infty}  \int_0^t \phi (s) \ds  &=   \big( u(0,t) -   u(\ell,t) \big) -   \frac{1}{A_\infty} \int_0^{\ell}A(x) \partial_t u(x,t) \dx \\
	&=   \int_0^\ell \left[  \partial_x u(x,t)  -  \frac{A(x)}{A_\infty}  \partial_t u(x,t) \right] \dx.
  \end{align}
This gives
    \begin{align}
     \left|  u(0,t) +    \frac{1}{A_\infty}  \int_0^t \phi (s) \ds  \right|  & \leq  \int_0^\ell \left|  \partial_x u(x,t)  -  \frac{A(x)}{A_\infty}  \partial_t u(x,t) \right| \dx. 
  \end{align}
from which the claim follows by Hölder's inequality. 
\end{proof}

\begin{proposition}\label{1decay-pro}
 Let $u = u_\phi  \in C^\infty( [0,\infty) \times \R)  $ satisfy 
  \begin{align}
    \square_A u(x,t) &= 0, &&   (x,t) \in (0,\infty) \times \R,\\
    \partial_x u (0,t) &= \phi(t), && t\in\R,\\
    u(x,t) &= 0, && (x,t) \in (0,\infty) \times (-\infty,0).
  \end{align}
  where $\phi \in C^{\infty}_c (\R)$ is supported in $(0,\infty)$. Assume furthermore that $A(x)=A_\infty$ for $x > \ell$. Then, there are $C,C'>0$ such that 
 \[
    \big|  u(0,t) -c(t)  \big| \leq C e^{-C't}, \quad \forall t \in \R, 
    \]
    where  $c(t) := - \frac{1}{A_\infty} \int_0^t \phi(s) ds$. 
\end{proposition}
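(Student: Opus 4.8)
The plan is to combine the two preceding lemmas: the pointwise-in-$t$ bound of Lemma~\ref{lem:measurement-decay}, which controls $|u(0,t)-c(t)|^2$ by the spatial $L^2$-integral of $\partial_x u + \frac{A}{A_\infty}\partial_t u$ over $[0,\ell]$, and the exponential energy decay of Lemma~\ref{gronw-lemma}. The bridge between them is a pointwise comparison of the measurement integrand with the weighted energy density appearing in $\mathscr{E}_{\phi,g_1,g_2}$; once that comparison is in place, the exponential decay of $\mathscr{E}_{\phi,g_1,g_2}$ transfers directly to $|u(0,t)-c(t)|$.

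First I would rewrite the integrand in the characteristic derivatives $p := \partial_t u + \partial_x u$ and $q := \partial_t u - \partial_x u$. Since $\partial_t u = (p+q)/2$ and $\partial_x u = (p-q)/2$, we obtain
\[
\partial_x u + \frac{A}{A_\infty}\partial_t u = \frac{1}{2}\left[\left(1 + \frac{A}{A_\infty}\right)p + \left(\frac{A}{A_\infty} - 1\right)q\right],
\]
so that $\left(\partial_x u + \frac{A}{A_\infty}\partial_t u\right)^2 \le C_1 (p^2 + q^2)$ pointwise on $[0,\ell]$, with $C_1$ depending only on $\sup_{[0,\ell]} A$ and $A_\infty$ ($A$ being bounded above and below by positive constants since it is admissible). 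Because $g_1,g_2$ are strictly positive on the compact interval $[0,\ell]$ by Lemma~\ref{gronw-lemma}, they are bounded below by a positive constant there, and likewise $A \ge A_{\min} > 0$; hence $p^2 + q^2 \le C_2 \left(g_1 p^2 + g_2 q^2\right)A$ for some $C_2 > 0$. Integrating over $[0,\ell]$ and recalling the definition \eqref{eq:energy-def} of $\mathscr{E}_{\phi,g_1,g_2}$, this yields a constant $K > 0$, independent of $t$, with
\[
\int_0^\ell \left(\partial_x u + \frac{A}{A_\infty}\partial_t u\right)^2 \dx \le K\,\mathscr{E}_{\phi,g_1,g_2}(t).
\]

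Combining this with Lemma~\ref{lem:measurement-decay} and the decay estimate \eqref{eq:energy-local-decay}, for $t > \tau_0 := \max\supp\phi$ we get
\[
|u(0,t) - c(t)|^2 \le K\,\mathscr{E}_{\phi,g_1,g_2}(t) \le K\,\mathscr{E}_{\phi,g_1,g_2}(\tau_0)\,e^{-\lambda(t-\tau_0)},
\]
and taking square roots gives $|u(0,t) - c(t)| \le C e^{-C't}$ for $t > \tau_0$ with $C' = \lambda/2$ and $C$ absorbing $\sqrt{K\,\mathscr{E}_{\phi,g_1,g_2}(\tau_0)}\,e^{\lambda\tau_0/2}$ (finite since $u$ is smooth). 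It then remains to upgrade the bound to all $t \in \R$. For $t < 0$ both $u(0,t)$ and $c(t)$ vanish (the former by the initial condition, the latter since $\phi$ is supported in $(0,\infty)$), so the inequality is trivial. On the compact interval $[0,\tau_0]$ the continuous function $|u(0,t)-c(t)|$ is bounded while $e^{-C't} \ge e^{-C'\tau_0}$, so enlarging $C$ if necessary makes the estimate hold there too; taking the larger of the two constants completes the bound for all $t \in \R$.

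I expect the only genuinely delicate point to be ensuring that the constant $K$ in the quadratic-form comparison is uniform in $t$. This is guaranteed by the strict positivity of $g_1,g_2$ on the compact $[0,\ell]$ (Lemma~\ref{gronw-lemma}) together with the two-sided bounds on $A$ from admissibility, so the comparison is in fact an elementary, $t$-independent estimate between two positive-definite quadratic forms in $(\partial_t u, \partial_x u)$. Everything else is a routine assembly of the cited lemmas.
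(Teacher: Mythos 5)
Your proof is correct and follows essentially the same route as the paper: bound $|u(0,t)-c(t)|^2$ via Lemma~\ref{lem:measurement-decay}, compare the integrand pointwise with the weighted energy density of $\mathscr{E}_{\phi,g_1,g_2}$ using the strict positivity of $g_1,g_2$ and the two-sided bounds on $A$, and then invoke the exponential decay from Lemma~\ref{gronw-lemma}. Your version is in fact slightly tidier than the paper's (which does a case analysis on the sign of $\partial_x u\,\partial_t u$ where you use characteristic variables, and which glosses over the square root and the extension to $t\le\tau_0$ that you handle explicitly).
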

\begin{proof}
Let $g_1,g_2$ be as in Lemma $\ref{gronw-lemma}$. 
By Lemma \ref{lem:measurement-decay},
 \[
 \big|  u(0,t) -c(t)  \big|^2 \leq \int_0^{\ell} \left(   \partial_x u (x,t) + \frac{A(x)}{A_\infty} \partial_t u(x,t) \right)^2  \dx
 \]
 Denote $X :=  \partial_x u (x,t) $ and $Y = \partial_t u(x,t) $. For $x \in (0,\ell)$, 
 \begin{align}
 \left(   \partial_x u (x,t) + \frac{A(x)}{A_\infty} \partial_t u(x,t) \right)^2 =  X^2 +2XY \frac{A(x)}{A_\infty} +   \frac{A^2(x)}{A_\infty^2}Y^2\\
  \leq  \begin{cases}   \left(1+\frac{A(x)}{A_\infty} + \frac{A^2(x)}{A_\infty^2} \right)  (X^2 +2XY +   Y^2) , & \text{if} \quad   XY \geq 0 , \\
 \left(1+\frac{A(x)}{A_\infty} + \frac{A^2(x)}{A_\infty^2} \right)  ( X^2 -2XY +  Y^2), & \text{if} \quad   XY <0  \end{cases} \\
   =   \begin{cases}  \left(1+\frac{A(x)}{A_\infty} + \frac{A^2(x)}{A_\infty^2} \right)   (X + Y)^2 , & \text{if} \quad   XY \geq 0 , \\
  \left(1+\frac{A(x)}{A_\infty} + \frac{A^2(x)}{A_\infty^2} \right)   ( X-Y)^2, & \text{if} \quad   XY <0  \end{cases} \\
 \leq K \Big(  g_1(x)  (X+Y)^2+   g_2(x) (X-Y)^2 \Big) A(x) ,
 \end{align}
 where 
 \[
 K = \frac{ \left(1+\frac{\max A}{A_\infty} + \frac{\max A^2}{A_\infty^2} \right)  }{[ \min  A ] [  \min_{x \in [0,\ell] }  ( \min \{  g_1(x) , g_2(x) \} ) ]}.
 \]
 Thus, 
 \[
  \big|  u(0,t) -c(t)  \big|^2\leq   K \int_0^{\ell}  \Big(  g_1(x)  (X+Y)^2+   g_2(x) (X-Y)^2 \Big) A(x)   \dx = K  \mathscr{E}_{\phi,g_1,g_2}(t). 
 \]
 By Lemma \ref{gronw-lemma}, $\mathscr{E}_{\phi,g_1,g_2}(t) \leq \mathscr{E}_{\phi,g_1,g_2}(\tau_0) e^{-\lambda (t-\tau_0)}$. 
 In conclusion, 
 \[
  \big|  u(0,t) -c(t)  \big|^2\leq  C e^{-C't},
 \]
for $C= K  \mathscr{E}_{\phi,g_1,g_2}(\tau_0)  e^{\lambda \tau_0}$ and $C' = \lambda$. 
\end{proof}


\section{Sondhi and Gopinath 1971}\label{sondhi_app}

We briefly describe  how the results in \cite{sondhi_gopinath} can be applied to determine $A$ from the Neumann-to-Dirichlet data. 
Given $f  \in  \mathcal{E}'(\R)  $, let $w= w_f \in C^\infty( (0,\infty) \times \R) $ satisfy the equations 
\begin{align}
 & \square_A w (x,t)  = 0,  && \text{for} \quad (x,t) \in    (0,\infty) \times \R ,\label{Atr:eq:wave-u}\\
 & \partial_x w |_{x=0} (t)  =  f(t), &&\text{for} \quad  t \in \R,\label{Atr:eq:neumann-u}\\
 & w (x,t)= 0, &&\text{for} \quad (x,t) \in (0,\infty) \times (- \infty, t_0 ) .\label{Atr:eq:initial-u}
\end{align} 
where $t_0 <  \min \text{supp} (f)$.
Then, $\Lambda_{ND} ( f ) = w |_{x=0}$. 
The wave equation $\square_A w (x,t)  = 0$ can be rewritten as a system
\begin{align}
&A \partial_x p = - \partial_t u \label{kkokk12} \\
&A \partial_t p = - \partial_x u, \label{kokk122}
\end{align}
where $p = -\partial_t w$ and $u =  A \partial_x w$.
As $A(0) =1$, we have $p (0, \tau)  = - \partial_t \ND( f)$. Consider symmetric $\phi \in C^\infty_c( -a, a)$, $a>0$.  
According to \cite[eq. (5)]{sondhi_gopinath}
\[
 - \partial_t \ND[ \phi]  (t) = \int_{-a }^t \phi(\tau) \hat{h}(t-\tau) d\tau, \quad \forall t \in (-\infty ,a)
\]
where $\hat{h} = \delta_0 + h$ is the impulse response, as in the reference. 
For a sequence $\phi_n \in C^\infty_c  (-a,a) $ converging  to $\delta_0$ in $\mathcal{E}'(\R)$, we get 
\[
 - \partial_t \ND[ \phi_n]  (t)  \to   h(t), \quad \forall t \in (-\infty,a),
\]
as $n \to \infty$ by continuity. Since $a>0$ is arbitrary, we conclude that $\ND$ uniquely determines $h$ on $\R$. That is;
\begin{lemma}\label{sondhi_lemma}
Let $\Lambda_\nu$, $\nu =1,2$ be the Neumann-to-Dirichlet maps related to admissible $A_\nu$, $\nu =1,2$. 
If $\Lambda_{1} = \Lambda_{2}$  holds on $\mathcal{E}'(\R)$ for the Neumann-to-Dirichlet maps of two admissible $A_1,A_2$, the associated impulse response functions $h_1$ and $h_2$ are identical. 
\end{lemma}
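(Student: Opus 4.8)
The plan is to read the impulse responses $h_1, h_2$ directly off the Neumann-to-Dirichlet maps $\Lambda_1, \Lambda_2$ via the Sondhi--Gopinath representation recalled just above, so that the hypothesis $\Lambda_1 = \Lambda_2$ on $\mathcal{E}'(\R)$ forces $h_1 = h_2$ at once. The key point is that each $h_\nu$ is extracted from $\Lambda_\nu$ by one and the same coefficient-independent limiting procedure, so equality of the operators transfers to equality of the responses without any further analysis of the individual coefficients.

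First I would fix $a > 0$ and choose symmetric $\phi_n \in C^\infty_c(-a,a)$ forming an approximate identity concentrating at the origin, so that $\phi_n \to \delta_0$ in $\mathcal{E}'(\R)$. For each admissible $A_\nu$, formula (5) of \cite{sondhi_gopinath} gives, with $\hat h_\nu = \delta_0 + h_\nu$,
\[
-\partial_t \Lambda_\nu[\phi_n](t) = \phi_n(t) + \int_{-a}^{t} \phi_n(\tau)\, h_\nu(t-\tau)\, d\tau, \qquad t \in (-\infty,a).
\]
Fixing $t \neq 0$, the term $\phi_n(t)$ vanishes once $\supp(\phi_n)$ has shrunk away from $t$, while the integral converges to $h_\nu(t)$; hence $-\partial_t \Lambda_\nu[\phi_n](t) \to h_\nu(t)$ pointwise, and by continuity of $h_\nu$ this identifies $h_\nu$ on all of $(-\infty,a)$.

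Finally I would invoke the hypothesis. Since $\Lambda_1 = \Lambda_2$ on $\mathcal{E}'(\R)$, we have $\Lambda_1[\phi_n] = \Lambda_2[\phi_n]$ for every $n$, and therefore $-\partial_t \Lambda_1[\phi_n] = -\partial_t \Lambda_2[\phi_n]$; letting $n \to \infty$ yields $h_1 = h_2$ on $(-\infty,a)$. As $a > 0$ is arbitrary, the identity extends to all of $\R$, which is the claim.

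I expect the main obstacle to be the limiting step rather than the final deduction, which is essentially formal. One must justify passing the limit $\phi_n \to \delta_0$ through both the time derivative and the convolution, and correctly separate the singular $\delta_0$ component of $\hat h_\nu$: it contributes only the term $\phi_n(t)$, which disappears at any fixed $t \neq 0$, so that the pointwise limit is exactly $h_\nu$ and not $\hat h_\nu = \delta_0 + h_\nu$. Once the representation formula and the continuity of $\Lambda_\nu$ on $\mathcal{E}'(\R)$ are granted---both available from the preceding discussion---the equality of impulse responses is an immediate consequence of the equality of the operators.
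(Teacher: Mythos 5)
Your proposal is correct and follows essentially the same route as the paper: both apply the Sondhi--Gopinath representation formula to a sequence $\phi_n \to \delta_0$ in $\mathcal{E}'(\R)$, pass to the limit to recover $h_\nu$ on $(-\infty,a)$ for arbitrary $a>0$, and conclude $h_1=h_2$ from $\Lambda_1=\Lambda_2$. Your explicit separation of the singular $\delta_0$ part of $\hat h_\nu$ is a slightly more detailed justification of the limit that the paper compresses into the phrase ``by continuity,'' but it is the same argument.
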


Let $a>0$. 
The equation \eqref{kokk122} yields 
\[
  A(x)  p (x,t_0 + a)  = \int_{t_0}^{t_0 + a}  A(x) \partial_t p (x,t) dt =    -   \int_{t_0}^{t_0 + a}  \partial_x u(x,t) dt ,
\]
where we used the initial value  $p(x,t_0) = 0$. 
The integral is well defined provided enough regularity in $f$. 
Integrating again in $x$ yields 
\begin{align}
\int_0^a  A(x)  p (x,t_0 + a) dx  =   -   \int_{t_0}^{t_0 + a}   u(a,t) dt   +  \int_{t_0}^{t_0 + a}  u(0,t) dt .\label{ad00}
\end{align}
Since the wave $w$ satisfies $w|_{t = t_0} = 0 = \partial_t w|_{t = t_0}$, it vanishes in the wedge $\{ (x,t) \in (0,\infty) \times \R : t -t_0 < x \}$.      
Consequently, the first term on the right in \eqref{ad00} vanishes and we deduce 
\begin{align}
\int_0^a  A(x)  p (x,t_0 + a) dx  =    \int_{t_0}^{t_0 + a}  u(0,t) dt .\label{1ad00}
\end{align}
This is the equation (3) in  \cite{sondhi_gopinath}. 
As $A(0) = 1$, it coincides with 
\begin{align}
\int_0^a  A(x)  p (x,t_0 + a) dx  =    \int_{t_0}^{t_0 + a}   f(t)  dt ,\label{1ad001}.
\end{align}
We can now show that not only $h_1 = h_2$ but also $A_1 = A_2$ holds provided that $\Lambda_{1} = \Lambda_{2}$. 
\begin{theorem}\label{sondhi_theorem}
If $\Lambda_{1} = \Lambda_{2}$ holds on $\mathcal{E}'(\R)$ for the Neumann-to-Dirichlet maps of two admissible $A_1,A_2$, then $A_1 = A_2$. 
\end{theorem}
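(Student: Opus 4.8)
The plan is to reduce everything to the impulse response and then invoke the reconstruction of Sondhi and Gopinath \cite{sondhi_gopinath}, the key analytic input being the identity \eqref{1ad001}. By Lemma \ref{sondhi_lemma} the hypothesis $\Lambda_1 = \Lambda_2$ already yields $h_1 = h_2 =: h$. I would first record the boundary information this provides: for every admissible input $f$ the traces at $x=0$ coincide for $\nu = 1,2$, since $A_\nu(0)=1$ forces $u_\nu(0,\cdot) = f$ on the Neumann side, while $p_\nu(0,\cdot) = -\partial_t\Lambda_\nu(f) = -\partial_t\Lambda(f)$ is independent of $\nu$. Thus both the pressure and the volume-velocity traces at the boundary are functionals of $h$ and $f$ alone, and so is the right-hand side $\int_{t_0}^{t_0+a} f$ appearing in \eqref{1ad001}. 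The whole difficulty is therefore to show that the assignment $h \mapsto A$ is injective.

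The reconstruction establishing this injectivity proceeds depth by depth. For each $a>0$, following \cite{sondhi_gopinath}, one constructs from $h$ a distinguished probing input $f = f_a$ whose purpose is to drive the interior pressure profile $x \mapsto p(x, t_0+a)$ on $[0,a]$ into a prescribed, coefficient-independent reference. Substituting this input into \eqref{1ad001} turns the left-hand side into a known multiple of $\int_0^a A(x)\,dx$, so that $\int_0^a A$ becomes an explicit functional of $h$; differentiating in $a$ then recovers $A(a)$. This exhibits a reconstruction map $\mathcal{R}$ with $\mathcal{R}(h_\nu) = A_\nu$ for $\nu = 1,2$. Since the entire construction uses only $h$, the equality $h_1 = h_2$ forces $A_1 = \mathcal{R}(h_1) = \mathcal{R}(h_2) = A_2$, which is the assertion.

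The main obstacle is the existence and unique determination of the focusing inputs $f_a$ from $h$ --- equivalently, the unique solvability of the Gelfand--Levitan/Marchenko-type integral equation whose kernel is assembled from $h$. This is precisely the technical heart of \cite{sondhi_gopinath}, where solvability rests on a positivity (energy and causality) argument guaranteeing that the relevant reflection operator $I + H$ is invertible. An equivalent viewpoint is layer stripping: by finite speed of propagation $h$ on $[0,2b]$ depends only on $A|_{[0,b]}$, and $A(b)$ is read off from the behaviour of $h$ near $t = 2b$; the delicate point is then to control the contribution of waves reflected from the still-unresolved region $x > b$, where $A$ is not yet known. Either way, once this solvability is secured the implication $h_1 = h_2 \Rightarrow A_1 = A_2$ is immediate, and the theorem follows.
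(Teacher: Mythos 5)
Your proposal is correct and follows essentially the same route as the paper: reduce to $h_1=h_2$ via Lemma~\ref{sondhi_lemma}, use the Sondhi--Gopinath focusing inputs $f_a$ determined by the invertible operator $1+H_a$ (which depends only on the impulse response) to force $p(\cdot,t_0+a)\equiv 1$ on $[0,a]$, and then read $\int_0^a A$ off from \eqref{1ad001} and differentiate in $a$. Your identification of the invertibility of $1+H_a$ as the technical heart matches exactly where the paper defers to \cite{sondhi_gopinath}.
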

\begin{proof}
Let the index $\nu = 1,2$ indicate the choice of $A = A_\nu$.  
It was shown in the reference that there is a convenient compactly supported Neumann boundary value $f_{\nu}=f_{a,\nu}$  such that 
\begin{align}
p_\nu (x,t_0 +a) =1, \quad  \text{for}  \quad  x \in [0,a]. \label{1i2}
\end{align} 
Moreover, such an input $f_{a,\nu}$ obeys $(1+H_{a, \nu}) f_{a,\nu} =  1$ where $(1+H_{a,\nu})$ is an invertible operator (see \cite[Appendix A]{sondhi_gopinath} for more details) that depends solely on the impulse response. 
Thus, if $\Lambda_{1} = \Lambda_{2}$ for the Neumann-to-Dirichlet maps of two admissible $A_1,A_2$, then the associated operators 
$(1+H_{a,\nu}) $, $\nu = 1,2$ coincide by the previous lemma. By the injectivity of the operators, we have $f_{a,1}= f_{a,2}$.  
Applying \eqref{1i2} to  \eqref{1ad001} yields
\[
\int_0^a  A_1(x) dx   =  \int_{t_0}^{t_0 + a}   f_{a,1}(t)  dt =  \int_{t_0}^{t_0 + a}   f_{a,2}(t)  dt  = \int_0^a  A_2(x) dx
\]
 from which $A_1 (a) = A_2(a)$ then follows by taking a derivative. Since $a>0$ was arbitrary, we conclude $A_1 = A_2$.
\end{proof}

\printbibliography

\end{document}